
\documentclass[12pt]{article}

\usepackage{amsfonts}
\usepackage{tikz}
\usepackage{pgfplots}


\setlength{\textwidth}{6.3in}
\setlength{\textheight}{9in}
\setlength{\oddsidemargin}{0in}
\setlength{\evensidemargin}{0in}
\setlength{\topmargin}{-.375in}

\newtheorem{theorem}{Theorem}

\newtheorem{corollary}[theorem]{Corollary}

\newtheorem{lemma}[theorem]{Lemma}

\newenvironment{proof}[1][Proof]{\noindent\textbf{#1.} }{\ \rule{0.5em}{0.5em}}
\begin{document}

\title{Random Euclidean embeddings in finite dimensional Lorentz
spaces}
\author{Daniel J. Fresen\thanks{%
University of Pretoria, Department of Mathematics and Applied Mathematics,
daniel.fresen@up.ac.za MSC: 46B06, 46B07, 46B09, 52A21, 52A23.}}
\maketitle

\begin{abstract}
Quantitative bounds for random embeddings of $\mathbb{R}^{k}$ into Lorentz
sequence spaces are given, with improved dependence on $\varepsilon$.
\end{abstract}






\section{Introduction}

Our starting point is Milman's general Dvoretzky theorem \cite{Mil}. The dependence on $\varepsilon$ is due to Schechtman \cite{Sch} following Gordon \cite{Gord}, and the Gaussian formulation due to Pisier \cite{Pis}. We refer the reader to \cite{MS, PVZ, Sch2} for more details.
\begin{theorem}
\label{LoMilGen}There exists a universal constant $c>0$ such that the following is true. Consider any
\[
(n,k,\varepsilon)\in \mathbb{N}\times\mathbb{N}\times (0,1)
\]
Let $\left\Vert \cdot \right\Vert$ be a norm on $\mathbb{R}^n$ and set
\[
M=\left(2\pi\right)^{-\frac{n}{2}}\int_{\mathbb{R}^n}\left\Vert x \right\Vert \exp\left({-\frac{1}{2}\left\vert x \right\vert^2}\right)dx \hspace{2cm} b=\sup\left\{\left\Vert \theta \right\Vert :\theta\in S^{n-1}\right\}
\]
where $\left\vert \cdot \right\vert$ denotes the standard Euclidean norm. Assume that $k\leq d$ where
\[
d= c\left(\frac{M}{b}\right)^2\varepsilon^2
\]
and let $G$ be an $n\times k$ random matrix with i.i.d. standard normal random variables as entries. With probability at least $1-2e^{-d}$, the following event occurs: for all $x\in \mathbb{R}^k$,
\begin{eqnarray}
(1-\varepsilon)M\left\vert x \right\vert \leq \left\Vert Gx \right\Vert \leq (1+\varepsilon)M\left\vert x \right\vert \label{epsilon iso}
\end{eqnarray}
\end{theorem}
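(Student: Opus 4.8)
The plan is to reduce, by homogeneity in $x$, to controlling the two extreme values $\min_{x\in S^{k-1}}\|Gx\|$ and $\max_{x\in S^{k-1}}\|Gx\|$, to locate their expectations by a Gaussian comparison inequality, and then to upgrade to a high-probability statement using Gaussian concentration. It is convenient to pass to the polar body $K=\{y\in\mathbb{R}^{n}:\langle z,y\rangle\le\|z\|\ \forall z\}$ of the unit ball of $\|\cdot\|$, so that $\|z\|=\sup_{y\in K}\langle z,y\rangle$. Then $M=\mathbb{E}\sup_{y\in K}\langle h,y\rangle$ for $h$ a standard Gaussian vector in $\mathbb{R}^{n}$ is the Gaussian mean width of $K$, and $b=\sup_{\theta\in S^{n-1}}\sup_{y\in K}\langle\theta,y\rangle=\sup_{y\in K}|y|$ is the Euclidean circumradius of $K$, so that $K\subseteq bB_{2}^{n}$.

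The first step is to prove
\[
\mathbb{E}\min_{x\in S^{k-1}}\|Gx\|\ \ge\ M-b\sqrt{k},\qquad\mathbb{E}\max_{x\in S^{k-1}}\|Gx\|\ \le\ M+b\sqrt{k}.
\]
For this I would apply Gordon's comparison inequality for Gaussian processes \cite{Gord} to the centered Gaussian process $(x,y)\mapsto\langle Gx,y\rangle$ on $S^{k-1}\times K$, comparing it against the decoupled process built from independent standard Gaussian vectors $g$ in $\mathbb{R}^{k}$ and $h$ in $\mathbb{R}^{n}$; the fact that $|x|=1$ throughout the sphere is precisely what lets the variance-equality and one-sided covariance hypotheses of Gordon's inequality hold simultaneously, after which the min--max of the decoupled process evaluates to $M\mp b\,\mathbb{E}|g|$ and $\mathbb{E}|g|\le\sqrt{k}$. (The case $\|\cdot\|=|\cdot|$, where the two bounds reduce to the classical singular value estimates $\mathbb{E}s_{\min}(G)\ge\sqrt{n}-\sqrt{k}$ and $\mathbb{E}s_{\max}(G)\le\sqrt{n}+\sqrt{k}$, is a helpful check.) Since the hypothesis $k\le d=c\,\varepsilon^{2}(M/b)^{2}$ gives $b\sqrt{k}\le b\sqrt{d}=\sqrt{c}\,\varepsilon M$, the two expectations are, respectively, at least $(1-\sqrt{c}\,\varepsilon)M$ and at most $(1+\sqrt{c}\,\varepsilon)M$.

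The second step is concentration. Viewing $G$ as a standard Gaussian vector in $\mathbb{R}^{nk}$ with the Euclidean (Hilbert--Schmidt) norm, the functions $F_{-}(G)=\min_{x\in S^{k-1}}\|Gx\|$ and $F_{+}(G)=\max_{x\in S^{k-1}}\|Gx\|$ are each $b$-Lipschitz: if $x^{\ast}\in S^{k-1}$ realises the relevant extremum for $G'$, then $|F_{\pm}(G)-F_{\pm}(G')|\le\|(G-G')x^{\ast}\|\le b\,|(G-G')x^{\ast}|\le b\,\|G-G'\|_{\mathrm{HS}}$, using $|x^{\ast}|=1$ in the last step. Gaussian concentration then gives $\mathbb{P}(F_{-}(G)<\mathbb{E}F_{-}(G)-t)\le e^{-t^{2}/(2b^{2})}$ and the mirror bound for $F_{+}$. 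Taking $t=(1-\sqrt{c})\,\varepsilon M$, the threshold $\mathbb{E}F_{-}(G)-t$ is at least $(1-\varepsilon)M$ and $\mathbb{E}F_{+}(G)+t$ at most $(1+\varepsilon)M$, while the exponent is $t^{2}/(2b^{2})=(1-\sqrt{c})^{2}\varepsilon^{2}M^{2}/(2b^{2})=(1-\sqrt{c})^{2}d/(2c)$ (using $d=c\,\varepsilon^{2}(M/b)^{2}$), which is at least $d$ provided $c$ is small enough that $(1-\sqrt{c})^{2}\ge 2c$ (any $c\le 3-2\sqrt{2}$ works). A union bound over these two events yields $(1-\varepsilon)M|x|\le\|Gx\|\le(1+\varepsilon)M|x|$ on $S^{k-1}$, hence by homogeneity for all $x\in\mathbb{R}^{k}$, off an event of probability at most $2e^{-d}$; this is exactly (\ref{epsilon iso}).

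I expect the main obstacle to be the careful and rigorous deployment of Gordon's inequality: one must choose the comparison process so that all of the variance and covariance conditions hold on $S^{k-1}\times K$ at once --- which is exactly why the minimum is taken over the unit sphere rather than the whole unit ball --- and then evaluate the min--max of the decoupled process cleanly to extract the clean error term $b\,\mathbb{E}|g|$. The remaining bookkeeping (checking the Lipschitz bounds, and tuning the single universal constant $c$ so that it simultaneously absorbs the $b\sqrt{k}$ loss from the Gordon step and keeps the concentration exponent at least $d$) is routine. A self-contained substitute for Gordon's inequality, in the spirit of Schechtman's argument \cite{Sch}, would replace the first step by a covering argument --- controlling $\|Gx\|$ on a sufficiently fine net of $S^{k-1}$ via one-point concentration and a union bound, bootstrapping this to the two-sided bound on all of $S^{k-1}$, and feeding that into the same concentration step --- where the delicate point is to take the net just fine enough to avoid losing a factor of $\log(1/\varepsilon)$ in the attainable value of $d$.
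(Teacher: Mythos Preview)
The paper does not give its own proof of Theorem~\ref{LoMilGen}: it is stated as background, attributed to Milman with the $\varepsilon$-dependence credited to Schechtman following Gordon, and the reader is referred to \cite{MS, PVZ, Sch2} for details. So there is nothing in the paper to compare your argument against line by line.

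That said, your proof is correct and is precisely the Gordon route the paper alludes to. The two steps---Gordon's min--max comparison on $S^{k-1}\times K$ to locate $\mathbb{E}\min_{\theta}\|G\theta\|$ and $\mathbb{E}\max_{\theta}\|G\theta\|$ within $b\sqrt{k}$ of $M$, followed by Gaussian concentration of the $b$-Lipschitz functionals $G\mapsto\min_\theta\|G\theta\|$ and $G\mapsto\max_\theta\|G\theta\|$---are exactly how one extracts the $\varepsilon^2$ dependence without a logarithmic loss. Your bookkeeping on the constant (requiring $(1-\sqrt{c})^2\ge 2c$) is clean. The alternative you mention at the end, Schechtman's net argument (Theorem~\ref{Schechtbound} in the paper), is the engine the paper actually uses for its own results in later sections; it has the advantage of applying to general Lipschitz functions rather than norms, which is what the paper needs, whereas your Gordon approach is slicker for Theorem~\ref{LoMilGen} itself since it avoids the net entirely and gives explicit constants.
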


If $\left(X,\left\Vert\cdot\right\Vert\right)$ is a general $n$-dimensional normed space over $\mathbb{R}$, one can put coordinates on $X$ by identifying it with $\mathbb{R}^n$ in such a way so that
\[
b=1 \hspace{3cm} M\geq c\sqrt{\ln n}\label{parm}
\]
and therefore if $n$ is sufficiently large we can ensure that $d$ is large. Logarithmic dependence on $n$ is the worst case scenario (for the best possible choice of coordinates), which is the correct behavior in $\ell_\infty^n$ but can be improved significantly for other spaces, such as $\ell_p^n$ for fixed $p\in\left[1,\infty\right)$, where one gets power dependence on $n$.

By rotational invariance of the normal distribution, $\mathrm{Range}(G)$ is a uniformly distributed random subspace in the Grassmannian $G_{n,k}$ of all $k$-dimensional linear subspaces of $\mathbb{R}^n$. The bounds in (\ref{epsilon iso}) mean that on $\mathrm{Range}(G)$, $\left\Vert \cdot \right\Vert$ approximates the pushforward norm $\left\Vert y \right\Vert_{\sharp}:=M\left\vert G^{-1}y \right\vert$, where $G^{-1}:\mathrm{Range}(G)\mapsto \mathbb{R}^k$ denotes the inverse of the linear map associated to $G$. Since the sub-level sets of this pushforward norm are ellipsoids, Milman's general Dvoretzky theorem can be interpreted as follows: Assuming $n$ is sufficiently large and we have chosen an appropriate coordinate system through which to identify a given normed space $X$ with $\mathbb{R}^n$, 

\medskip

\noindent $\bullet$ Most $k$-dimensional subspaces of $X$ are almost isometric to Hilbert spaces

\noindent $\bullet$ Most $k$-dimensional cross-sections of the unit ball $B=\left\{x:\left\Vert x \right\Vert \leq 1\right\}$ are approximately ellipsoidal.

\medskip

For $\varepsilon=\varepsilon_0$, for any universal constant $\varepsilon_0\in (0,1)$, Theorem \ref{LoMilGen} is in a particular sense sharp (see \cite{HuWe, MS2}), and in this sense dependence on $n$ is understood. However the question of optimal dependence on $\varepsilon$ is open. We refer the reader to \cite{PaVa, Sch3} for the best existing bounds of the form $\rho\left(\varepsilon\right)\ln n$ in the existential Dvoretzky theorem, where one is satisfied with a single subspace of this dimension, and bounds for general classes of spaces with symmetries are contained in \cite{Fr0, Tik}.

\medskip

\hspace*{-0.92cm}\noindent
\fbox{
    \parbox{\textwidth}{Paouris, Valettas and Zinn \cite{PVZ} studied dependence on $\varepsilon$ in the randomized Dvoretzky theorem for the $\ell_p^n$ spaces, both in the range $1\leq p\leq C\ln n$ and $p>C\ln n$, improving on the bounds in Theorem \ref{LoMilGen}.

The results we now present extend results in \cite{PVZ} to the class of Lorentz spaces. These spaces have a structure that is more complicated than that of the $\ell_p^n$ spaces, and the Lorentz norm of a Gaussian random vector is typically not written in terms of the sum of i.i.d. random variables. Our approach is different to that in \cite {PVZ} and in the special case of $\ell_p^n$ it allows for a simpler proof without the use of Talagrand's $L_1-L_2$ inequality, and with improved dependence on $p$ (removing a factor of $p^{-p}$).

    }
}

\section{What's new?}

Consider the finite dimensional Lorentz spaces, i.e. $\mathbb{R}^n$ endowed with the norm
\[
\left\vert x\right\vert_{\omega,p}=\left(\sum_{i=1}^n\omega_i x_{[i]}^p\right)^{1/p}
\]
where $1\leq p <\infty$, $\left(\omega_i\right)_1^n$ is any non-increasing sequence in $[0,1]$ with $\omega_1=1$, and $\left( x_{\left[ i\right] }\right) _{1}^{n}$ denotes the
non-increasing rearrangement of $\left( \left\vert x_{i}\right\vert \right)
_{1}^{n}$. These spaces are the finite dimensional counterparts to the infinite dimensional Lorentz spaces (see \cite{LiTz1}) which play a classical role in analysis.

Before studying the general case in Section \ref{LoGen}, we study the special case where $\omega_i=i^{-r}$ for $0\leq r<\infty$, using the notation
\[
\left\vert x\right\vert _{r,p}=\left( \sum_{i=1}^{n}i^{-r}x_{\left[ i\right]
}^{p}\right) ^{1/p} 
\]%
In this special case, using Lemmas \ref{Lo med calculation} and \ref{Lo Lip con}, Theorem \ref{LoMilGen} applies with
\[
d= \left\{ 
\begin{array}{ccccc}
c_{r,p}n\varepsilon ^{2} & : & 0\leq r \leq 1/2 , & p<2\left( 1-r\right) \\ 
c_{r,p}n\left( \ln n\right) ^{1-\frac{2}{p}}\varepsilon ^{2} & : & 0\leq r \leq 1/2 , & p=2\left( 1-r\right) \\ 
c_{r,p}n^{\frac{2(1-r)}{p}}\varepsilon ^{2} & : & 0\leq r \leq 1/2 , & p>2\left( 1-r\right) \\ 
c_{r,p}n^{\frac{2(1-r)}{p}}\varepsilon ^{2} & : & 1/2<r<1\\ 
c_{r,p}\left( \ln n\right) ^{1+\frac{2}{p}}\varepsilon ^{2} & : & r=1 &  & 
\end{array}%
\right. 
\]
where the coefficients $c_{r,p}>0$ do not depend on anything except $r$ and $p$ and can be written explicitly. Our first main result, Theorem \ref{LoMain}, is a little hard on the eye, and we have hidden it near the end of the paper. Its two main corollaries, however, are simpler (Corollaries \ref{Corrasy} and \ref{Corrp}). Both improve the dependence on $\varepsilon$.

\begin{corollary}
\label{Corrasy}In the case $\left\Vert \cdot \right\Vert=\left\vert \cdot \right\vert _{r,p}$, where $0\leq r \leq 1$ and $1\leq p<\infty$, Theorem \ref{LoMilGen} holds with the sufficient condition $k\leq d$ replaced with $k\leq d'$, where
\[
d'\geq \left\{ 
\begin{array}{ccc}
c_{r,p}n\varepsilon^2 & : & 0\leq r<1/2, p<2-2r\\ 
c_{r,p}\min\left\{n\varepsilon^2,n\left(\ln n\right)^{1-\frac{2}{p}}\varepsilon^{\frac{2}{p}}\right\} & : & 0\leq r<1/2, p=2-2r\\
c_{r,p}\min\left\{n\varepsilon^2,n^{\frac{2(1-r)}{p}}\varepsilon^{\frac{2}{p}}\right\} & : & 0\leq r<1/2, p>2-2r\\
c_{r,p}n\left(\ln n\right)^{-1}\varepsilon^2 & : & r=1/2, p=2-2r=1\\
c_{r,p}\min\left\{n\left(\ln n\right)^{-p}\varepsilon^2,n^{\frac{1}{p}}\varepsilon^{\frac{2}{p}}\right\} & : & r=1/2, p>2-2r=1\\
c_{r,p}\min\left\{n^{2(1-r)}\left(\ln n\right)^{-(p-1)}\varepsilon^2,n^{\frac{2(1-r)}{p}}\varepsilon^{\frac{2}{p}}\right\} & : & 1/2<r<1\\
c_{r,p}\min\left\{\left(\ln n\right)^{3}\varepsilon^2,\left(\ln n\right)^{1+\frac{2}{p}}\varepsilon^{\frac{2}{p}}\right\} & : & r=1
\end{array}%
\right.
\]
\end{corollary}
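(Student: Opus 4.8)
\medskip
\noindent\textbf{Proof of Corollary~\ref{Corrasy} (strategy).}
The plan is to deduce the corollary from Theorem~\ref{LoMain} by specializing the weights to $\omega_{i}=i^{-r}$ and evaluating the three quantities its bound depends on: the Gaussian mean $M$, the parameter $b$, and a further ``bulk concentration'' parameter $V=V(r,p,n)$. The first two are supplied by Lemmas~\ref{Lo med calculation} and~\ref{Lo Lip con}: writing $g$ for a standard Gaussian vector in $\mathbb{R}^{n}$, the mean $M=\mathbb{E}\left\vert g\right\vert_{r,p}$ is of order $n^{(1-r)/p}$ when $0\le r<1$ and of order $(\ln n)^{1/2+1/p}$ when $r=1$, while $b=\sup_{\theta\in S^{n-1}}\left\vert\theta\right\vert_{r,p}$ is, up to a constant depending only on $r$ and $p$, of order $1$ when $p>2(1-r)$ (with extremal vector $\theta=e_{1}$, or $\theta_{i}\propto i^{-r/(2-p)}$ when $p<2$), of order $n^{(1-r)/p-1/2}$ when $p<2(1-r)$ (extremal vector the flat one), and of order $(\ln n)^{1/p-1/2}$ at the critical exponent $p=2(1-r)$. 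Substituting these into Theorem~\ref{LoMilGen} reproduces the baseline $d\asymp(M/b)^{2}\varepsilon^{2}$ of the table; this estimate comes from Gaussian (Lipschitz-scale $b$) concentration of $\left\vert g\right\vert_{r,p}$ about $M$, and it is wasteful exactly when the extremal direction for $b$ is the single coordinate $e_{1}$ rather than a spread vector, i.e. when $p\ge 2(1-r)$ --- which is where the gain will come from.

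The improvement rests on replacing the Lipschitz bound by a two-regime deviation inequality. Writing
\[
\left\vert g\right\vert_{r,p}^{p}=\sum_{i=1}^{n}i^{-r}g_{[i]}^{p},
\]
we split the sum into the ``bulk'', the indices $i$ of order $n$, where the order statistics $g_{[i]}$ are of order $1$ and product-measure concentration is strong, and the handful of largest coordinates, where the relevant scale is $\max_{i}\left\vert g_{i}\right\vert$, of order $\sqrt{\ln n}$. The bulk accounts for all of $M$ and yields
\[
\mathbb{P}\left(\left\vert\,\left\vert g\right\vert_{r,p}-M\,\right\vert>\varepsilon M\right)\le 2\exp\left(-c_{r,p}V\varepsilon^{2}\right)
\]
for $\varepsilon$ below an explicit threshold, where $V$ is an effective number of independent bounded coordinates: of order $n$ for $0\le r<1/2$, of order $n(\ln n)^{-p}$ for $r=1/2$, of order $n^{2(1-r)}(\ln n)^{-(p-1)}$ for $1/2<r<1$, and of order $(\ln n)^{3}$ for $r=1$. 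The logarithmic corrections for $r\ge 1/2$ arise because for such $r$ the weights $i^{-r}$ spread the mass of the sum over roughly $\log n$ dyadic blocks of order statistics, each carrying fluctuations of logarithmic size, so the sum concentrates only as well as a sum of about $V$ genuinely independent terms. Beyond the threshold the deviation is produced by the largest coordinates: forcing $\left\vert g_{[1]}\right\vert$ to have size of order $(p\varepsilon)^{1/p}M$ already shifts $\left\vert g\right\vert_{r,p}$ by order $\varepsilon M$, and this has probability of order $\exp(-c_{r,p}(M/b)^{2}\varepsilon^{2/p})$ (equal to $\exp(-cM^{2}\varepsilon^{2/p})$ away from the critical exponent), giving a heavier one-sided tail with exponent of order $(M/b)^{2}\varepsilon^{2/p}$.

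Feeding these two estimates into the Gaussian min-max (Gordon-type) argument behind Theorem~\ref{LoMain} --- which, unlike a crude single-net union bound, costs no factor $\ln(1/\varepsilon)$ or $\ln n$ --- one finds that the conclusion of Theorem~\ref{LoMilGen} holds with $d'$ of order $c_{r,p}\min\left\{V\varepsilon^{2},\,(M/b)^{2}\varepsilon^{2/p}\right\}$. Since $V\ge(M/b)^{2}$ and $\varepsilon^{2/p}\ge\varepsilon^{2}$, this always improves on the baseline; when $p<2(1-r)$ one has $V=(M/b)^{2}=n$ and $p\ge 1$, so the minimum is $n\varepsilon^{2}$ and there is no gain, and inserting the values of $M$, $b$ and $V$ recorded above --- with the critical exponent $p=2(1-r)$ and the endpoints $r=1/2$ and $r=1$ handled separately --- produces each of the seven lines of the statement. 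The main obstacle is the deviation inequality of the previous paragraph: establishing the bulk bound with the sharp effective dimension $V$, in particular correctly locating the logarithmic corrections for $r\ge 1/2$, and matching it to the extreme-coordinate regime at precisely the right threshold, all without appealing to Talagrand's $L_{1}$--$L_{2}$ inequality; what remains afterwards is routine bookkeeping at the boundary parameter values.
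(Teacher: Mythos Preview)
Your sketch conflates two distinct things. Corollary~\ref{Corrasy} is not a standalone result requiring a new argument: it is obtained from Theorem~\ref{LoMain} simply by reading off $d'=\min\{E,F\}$ in each of the cases ia--iv and collapsing the resulting expressions into the seven lines of the table (the paper records exactly these simplifications immediately after the statement of Theorem~\ref{LoMain}). Note also that Theorem~\ref{LoMain} is already stated for the specific weights $\omega_i=i^{-r}$; it is Theorem~\ref{LoMain2} that treats general $\omega$, so ``specializing the weights'' is not the step.

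What you are actually sketching is an alternative route to Theorem~\ref{LoMain} itself, and here your description departs from the paper in two essential respects. First, the paper does \emph{not} decompose the sum $\sum i^{-r}g_{[i]}^p$ into a bulk part and an extreme-coordinate part. Instead it works with the $p$-th power $\psi(x)=\left\vert x\right\vert_{r,p}^p$, observes that $\left\vert\nabla\psi(x)\right\vert^2=p^2\sum i^{-2r}x_{[i]}^{2(p-1)}$, and bounds this gradient on a convex sublevel set $A_1=\{\left\vert x\right\vert_\sharp\le S\}$ of an auxiliary norm $\left\vert\cdot\right\vert_\sharp$ manufactured in Theorem~\ref{orderorderbound}; one then extends $\psi|_{A_1}$ to a globally Lipschitz function and applies Gaussian concentration. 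The two regimes you call $V\varepsilon^2$ and $(M/b)^2\varepsilon^{2/p}$ are exactly the terms $E$ and $F$, and they arise from the two pieces $A$ and $Bt^{2(p-1)}$ in the gradient bound $R\le A+Bt^{2(p-1)}$, not from a bulk/extremes split of the original sum. Second, the uniform-over-the-sphere step is not a Gordon min--max argument but Schechtman's Theorem~\ref{Schechtbound}, together with a net argument (using that $A_1$ is a norm-ball, hence convex) to force $G\theta\in A_1$ for every $\theta\in S^{k-1}$.

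Your bulk/extremes heuristic is closer in spirit to the Paouris--Valettas--Zinn approach for $\ell_p^n$, and something along those lines might be pushed through, but you have correctly identified and then left open the real difficulty: obtaining the sub-Gaussian bound with the correct effective dimension $V$ --- including the logarithmic corrections for $r\ge 1/2$ --- without Talagrand's $L_1$--$L_2$ inequality. The paper's restricted-gradient-plus-Lipschitz-extension mechanism is precisely what accomplishes this, and your sketch offers no substitute for it.
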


For the spaces under consideration, Corollary \ref{Corrasy} improves on Theorem \ref{LoMilGen} in all cases except when either $p<2-2r$ or when $p=1$, and in those cases it reduces to the old bound.

We have excluded the case $r>1$ from Corollary \ref{Corrasy} because as $n\rightarrow \infty$, the resulting Lorentz space is isomorphic to $\ell_\infty^n$, and Theorem \ref{Lo ell infty regime} below gives better estimates. More generally, when
\[
p>c\ln \left( 1+\frac{1+n^{1-r}}{1+\left\vert
1-r\right\vert \ln n}\ln n\right)  
\]%
for arbitrarily small universal constant $c>0$, Lemma \ref{Lo basic power int} (later in the paper) implies%
\[
\left\vert x\right\vert _{\infty }\leq \left\vert x\right\vert _{r,p}\leq
C\left\vert x\right\vert _{\infty }\left( 1+\frac{1+n^{1-r}}{1+\left\vert
1-r\right\vert \ln n}\ln n\right) ^{1/p}\leq C^{\prime }\left\vert
x\right\vert _{\infty } 
\]%
In this case the estimates in Theorem \ref{LoMain} start to break down. The
following result, which generalizes the case $p>c\ln n$ (and $r=0$) in \cite[Theorem 1.2]%
{PVZ} can then be used instead (with slightly improved probability bound from $1-Cn^{-c\varepsilon/\ln (1/\varepsilon)}$ in \cite{PVZ}).

\begin{theorem}
\label{Lo ell infty regime}For all $0<c_{1}<C_{1}$ there exists $c_{2}>0$
such that the following is true: let $n\in \mathbb{N}$ and let $\left\vert
\cdot \right\vert _{\sharp }$ be a norm on $\mathbb{R}^{n}$ that is
invariant under coordinate permutations and satisfies $c_{1}\left\vert
x\right\vert _{\infty }\leq \left\vert x\right\vert _{\sharp }\leq
C_{1}\left\vert x\right\vert _{\infty }$ (for all $x\in \mathbb{R}^{n}$).
Let $\varepsilon \in \left( 0,1\right) $ and $0<k\leq c_{2}\varepsilon
\left( \ln \varepsilon ^{-1}\right) ^{-1}\ln n$. Let $G$ be an $n\times k$
standard Gaussian random matrix. Then with probability at least $%
1-Cn^{-c_{2}\varepsilon }$, for all $x\in \mathbb{R}^{k}$, $\left(
1-\varepsilon \right) \mathbb{M}\left\vert Ge_{1}\right\vert _{\sharp
}\left\vert x\right\vert \leq \left\vert Gx\right\vert _{\sharp }\leq \left(
1+\varepsilon \right) \mathbb{M}\left\vert Ge_{1}\right\vert _{\sharp
}\left\vert x\right\vert $.
\end{theorem}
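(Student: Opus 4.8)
The plan is to reduce the statement to a bound on the expected sup and Lipschitz modulus of the process $x\mapsto |Gx|_\sharp$ on the sphere, and then feed the numbers into a Gaussian concentration/$\varepsilon$-net argument tuned to the regime where $|\cdot|_\sharp$ is equivalent to $|\cdot|_\infty$. First I would fix $\theta\in S^{k-1}$ and consider the random vector $g_\theta=G\theta\in\mathbb{R}^n$, which is standard Gaussian; write $f(\theta)=|g_\theta|_\sharp$. By the hypothesis $c_1|x|_\infty\le|x|_\sharp\le C_1|x|_\infty$ we have $c_1\mathbb{E}|g_\theta|_\infty\le\mathbb{E}f(\theta)\le C_1\mathbb{E}|g_\theta|_\infty$, and $\mathbb{E}|g|_\infty\asymp\sqrt{\ln n}$; moreover $|g_\theta|_\sharp$ has Lipschitz constant at most $C_1$ as a function of $g_\theta\in(\mathbb{R}^n,|\cdot|_2)$, so Gaussian concentration gives, for each fixed $\theta$,
\[
\mathbb{P}\left(\bigl||g_\theta|_\sharp-\mathbb{M}|Ge_1|_\sharp\bigr|>\tfrac{\varepsilon}{2}\mathbb{M}|Ge_1|_\sharp\right)\le C\exp\left(-c\varepsilon^2(\ln n)/C_1^2\right),
\]
using that the mean and the median of $f$ differ by $O(1)$ while $\mathbb{M}|Ge_1|_\sharp\gtrsim\sqrt{\ln n}$. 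This is the one-point estimate; the subtlety is that $\varepsilon^2\ln n$ is \emph{not} enough net room to kill a net on $S^{k-1}$ of cardinality $(3/\varepsilon)^k$ once $k\asymp\varepsilon(\ln\varepsilon^{-1})^{-1}\ln n$, so a crude union bound over a multiplicative net fails and we must be smarter about the Lipschitz behaviour of $\theta\mapsto f(\theta)$.

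The key step, and the one I expect to be the main obstacle, is controlling the oscillation of $\theta\mapsto|G\theta|_\sharp$ on a net at the right scale. The trick (this is the mechanism behind the $\varepsilon(\ln\varepsilon^{-1})^{-1}$ rather than $\varepsilon^2$ in \cite{PVZ}) is that $|\cdot|_\sharp\le C_1|\cdot|_\infty$ and $|\cdot|_\infty$ is a \emph{very rough} norm: for a single coordinate, $(G\theta)_j=\langle G^{*}e_j,\theta\rangle$ where the rows $G^{*}e_j$ are i.i.d. standard Gaussian in $\mathbb{R}^k$, so with overwhelming probability all $n$ rows have Euclidean norm $\lesssim\sqrt{k}+\sqrt{\ln n}\asymp\sqrt{\ln n}$ (using $k\lesssim\ln n$). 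On that event the map $\theta\mapsto G\theta$ has all coordinates $O(\sqrt{\ln n})$-Lipschitz, hence $\theta\mapsto|G\theta|_\sharp$ is $C_1\cdot O(\sqrt{\ln n})$-Lipschitz into $\mathbb{R}$ with the $|\cdot|_\infty$ norm controlling each coordinate — but crucially we only need to move between net points of distance $\delta$, and we can afford $\delta$ as small as a negative power of $n$ at the cost of only $k\ln(1/\delta)\lesssim(\ln n)^2$ in the exponent, which is still beaten by $\exp(-c\varepsilon n^{?})$... no: here the correct bookkeeping is different, since the concentration exponent is only $\varepsilon^2\ln n$. Instead one uses a two-scale argument: choose an $\varepsilon$-net $\mathcal{N}$ at the coarse multiplicative scale with $|\mathcal{N}|\le(3/\varepsilon)^k=\exp(k\ln(3/\varepsilon))$, and require $k\ln(3/\varepsilon)\le\tfrac{1}{2}c\varepsilon^2(\ln n)/C_1^2$... which would force $k\lesssim\varepsilon^2(\ln\varepsilon^{-1})^{-1}\ln n$, not $\varepsilon$. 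So the genuine gain must come from a finer analysis: one does not ask for $\tfrac{\varepsilon}{2}$-multiplicative control at every net point, but rather controls a \emph{sum} or \emph{average} behaviour of $|G\theta|_\sharp$ using the boundedness $|G\theta|_\sharp\le C_1\mathbb{M}$ on a large event and a concentration-of-the-mean argument, trading the deviation $\varepsilon$ linearly (as a deviation of a near-constant random variable) rather than as $\varepsilon^2$ in the exponent.

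Concretely, I would argue as follows. Let $K=\sup_{\theta\in S^{k-1}}|G\theta|_\sharp$. By the above row-norm bound together with $|\cdot|_\sharp\le C_1|\cdot|_\infty$, one shows $K\le C\mathbb{M}$ with probability $\ge 1-Cn^{-c}$ for an absolute $c>0$; this already gives the upper bound in the theorem up to constants and, more importantly, shows the process is uniformly bounded by $O(\mathbb{M})$. For the two-sided $(1\pm\varepsilon)$ estimate: fix a net $\mathcal{N}\subset S^{k-1}$ with $|\mathcal{N}|\le(3/\delta)^k$ at scale $\delta=\varepsilon/(CK/\mathbb{M})$ chosen so that on the boundedness event a $\delta$-perturbation of $\theta$ changes $|G\theta|_\sharp/\mathbb{M}$ by at most $\varepsilon/2$ — here I use that $\theta\mapsto G\theta$ is $\|G\|_{\ell_2^k\to\ell_\infty^n}$-Lipschitz and $\|G\|_{\ell_2^k\to\ell_\infty^n}\le C\sqrt{\ln n}\asymp\mathbb{M}$ on the good event, so $\delta\asymp\varepsilon$ suffices and $\ln|\mathcal{N}|\le k\ln(C/\varepsilon)$. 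Then union-bound the one-point estimate over $\mathcal{N}$: the failure probability is $\le|\mathcal{N}|\cdot C\exp(-c\varepsilon^2(\ln n)/C_1^2)$, and for this to be $\le Cn^{-c_2\varepsilon}$ we need $k\ln(C/\varepsilon)\le c\varepsilon^2\ln n + c_2\varepsilon\ln n$... The resolution of the apparent loss is that the \emph{correct} one-point tail is not $\exp(-c\varepsilon^2\ln n)$ for the full range but, for deviations below a near-constant quantity that is genuinely of size $\Theta(\sqrt{\ln n})$ with sub-Gaussian fluctuations of size $\Theta(1)$, the relevant large-deviation rate for an $\varepsilon$-relative deviation is $\exp(-c\varepsilon\sqrt{\ln n}\cdot\varepsilon\sqrt{\ln n})=\exp(-c\varepsilon^2\ln n)$ — so to actually obtain the claimed $k\lesssim\varepsilon(\ln\varepsilon^{-1})^{-1}\ln n$ one must replace the multiplicative net by the sharper chaining/union argument from \cite{PVZ}: split $S^{k-1}$ by the value of $|G\theta|_\sharp$, use that on the overwhelming event $|G\theta|_\sharp\in[c_1\mathbb{M}_0, C\mathbb{M}_0]$ uniformly with $\mathbb{M}_0=\mathbb{M}|Ge_1|_\sharp\asymp\sqrt{\ln n}$, and then only $O(k\ln(1/\varepsilon))$ of net entropy has to be absorbed while the concentration has a \emph{linear}-in-$\varepsilon$ rate $\exp(-c\varepsilon\ln n)$ coming from the $|\cdot|_\infty$-structure (a max of $n$ Gaussians deviates up by $t$ with probability $\asymp\exp(-t\sqrt{2\ln n})$, i.e.\ a one-sided rate linear in the absolute deviation, which for $\varepsilon$-relative deviation of a $\sqrt{\ln n}$-scale quantity is $\exp(-c\varepsilon\ln n)$). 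Matching $k\ln(C/\varepsilon)\le \tfrac12 c_2\varepsilon\ln n$ then yields exactly $k\le c_2\varepsilon(\ln\varepsilon^{-1})^{-1}\ln n$ and failure probability $\le Cn^{-c_2\varepsilon}$. The main obstacle is thus the \emph{lower} deviation $|G\theta|_\sharp\ge(1-\varepsilon)\mathbb{M}_0$: an upper bound on a max is easy but a lower bound requires showing $|G\theta|_\infty$ (hence $|G\theta|_\sharp$) does not drop, which one does via a small-ball/anti-concentration estimate for the max of $n$ correlated Gaussians — standard but the one place where the permutation-invariance and $c_1|x|_\infty\le|x|_\sharp$ hypotheses are both essential.
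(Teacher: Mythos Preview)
Your proposal contains a genuine gap at the decisive step. You correctly identify that the standard Gaussian concentration bound $\exp(-c\varepsilon^{2}\ln n)$ is too weak, and that one needs a one-point deviation rate of the form $\exp(-c\varepsilon\ln n)$ to make the $(\varepsilon/4)$-net argument close with $k\asymp \varepsilon(\ln\varepsilon^{-1})^{-1}\ln n$. But your mechanism for obtaining that rate does not deliver $(1\pm\varepsilon)$-concentration of $\left\vert X\right\vert_{\sharp}$ about \emph{its own} median. You argue via the equivalence $c_{1}\left\vert x\right\vert_{\infty}\le\left\vert x\right\vert_{\sharp}\le C_{1}\left\vert x\right\vert_{\infty}$ and the sharp tail of $\left\vert X\right\vert_{\infty}$; however, from $\left\vert X\right\vert_{\infty}\ge(1-\varepsilon)\mathbb{M}\left\vert X\right\vert_{\infty}$ you can only deduce $\left\vert X\right\vert_{\sharp}\ge c_{1}(1-\varepsilon)\mathbb{M}\left\vert X\right\vert_{\infty}\ge (c_{1}/C_{1})(1-\varepsilon)\mathbb{M}\left\vert X\right\vert_{\sharp}$, which is an isomorphic bound, not a $(1-\varepsilon)$-isometric one. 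Your closing sentence invokes permutation invariance but does not say how to use it, and the ``small-ball for a max'' idea does not close this gap.

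The paper's argument uses permutation invariance in an essential and different way. Let $T$ sort the coordinates in non-decreasing order; then $\left\vert X\right\vert_{\sharp}=\left\vert TX\right\vert_{\sharp}$, and for independent standard Gaussians $X,Y$ one has
\[
\bigl|\,\left\vert X\right\vert_{\sharp}-\left\vert Y\right\vert_{\sharp}\,\bigr|\le\left\vert TX-TY\right\vert_{\sharp}\le C_{1}\left\vert TX-TY\right\vert_{\infty}.
\]
The point is that $\left\vert TX-TY\right\vert_{\infty}$, i.e.\ the uniform distance between two independent vectors of Gaussian \emph{order statistics}, is $\le Ct^{2}/\sqrt{\ln n}$ with probability $\ge 1-C\exp(-ct^{2})$ (this is estimate (\ref{ellinftyest}) of Lemma~\ref{normalorderstats}). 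Setting $t^{2}\asymp\varepsilon\ln n$ gives a two-sided deviation $\le C\varepsilon\sqrt{\ln n}\asymp\varepsilon\,\mathbb{M}\left\vert X\right\vert_{\sharp}$ with probability $\ge 1-Cn^{-c\varepsilon}$, which is exactly the linear-in-$\varepsilon$ one-point rate you were after, now valid for $\left\vert\cdot\right\vert_{\sharp}$ itself. The standard $(\varepsilon/4)$-net argument then finishes the proof. The missing idea in your write-up is precisely this sorting step: it converts the problem from ``tails of $\left\vert X\right\vert_{\infty}$'' (which only yields isomorphic control of $\left\vert X\right\vert_{\sharp}$) to ``tightness of Gaussian order statistics'' (which yields genuine $(1\pm\varepsilon)$-concentration of $\left\vert X\right\vert_{\sharp}$).
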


\begin{proof}
Let $T:\mathbb{R}^{n}\rightarrow \mathbb{R}^{n}$ be the map that arranges
the coordinates of a vector in non-decreasing order. Let $X$ and $Y$ be
independent standard normal random vectors in $\mathbb{R}^{n}$. From
estimates for the normal distribution, see e.g. (\ref{ellinftyest}) in Lemma \ref{normalorderstats}, with probability at least $1-C\exp \left( -ct^{2}\right) $,%
\[
\left\vert \left\vert X\right\vert _{\sharp }-\left\vert Y\right\vert
_{\sharp }\right\vert =\left\vert \left\vert TX\right\vert _{\sharp
}-\left\vert TY\right\vert _{\sharp }\right\vert \leq \left\vert
TX-TY\right\vert _{\sharp }\leq C_{1}\left\vert TX-TY\right\vert _{\infty
}\leq \frac{CC_{1}t^{2}}{\sqrt{\ln n}} 
\]%
This can be converted to a deviation of $\left\vert X\right\vert _{\sharp }$ about $\mathbb{M}\left\vert X\right\vert _{\sharp }$ and is the same deviation estimate satisfied by $\left\vert X\right\vert
_{\infty }$. The usual proof of the randomized Dvoretzky theorem (using
the $\left( \varepsilon /4\right) $-net argument, see e.g. \cite{MS, Sch2})
then transfers to $\left\vert \cdot \right\vert _{\sharp }$.
\end{proof}

The second corollary of Theorem \ref{LoMain} applies to the classical $\ell_p^n$ spaces, with improved dependence on $p$ compared to \cite[Theorem 1.2]{PVZ}.

\begin{corollary}
\label{Corrp}For all $C_1>0$ there exists $c_2>0$ such that the following statement is true. In the case $\left\Vert \cdot \right\Vert=\left\vert \cdot \right\vert _{0,p}=\left\vert \cdot \right\vert _{p}$, and under the added assumption that $p<C_1\ln n$, Theorem \ref{LoMilGen} holds with the sufficient condition $k\leq d$ replaced with $k\leq d'$, where $d'$ is defined as follows,
\[
d'=\left\{ 
\begin{array}{ccccc}
cn\varepsilon^2 & : &1\leq p\leq2 \\ 
c_2\min\left\{c^pn\varepsilon^2,pn^{\frac{2}{p}}\varepsilon^{\frac{2}{p}} \right\} & : & 2<p<C_1\ln n
\end{array}%
\right. 
\]
and $c>0$ is a universal constant.
\end{corollary}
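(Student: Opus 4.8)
The plan is to read off Corollary \ref{Corrp} from the special case $r=0$ of Theorem \ref{LoMain} (note $\left\vert\cdot\right\vert_{0,p}=\left\vert\cdot\right\vert_p$), after treating $1\le p\le 2$ by hand; this last range is also the $r=0$, $p\le 2$ content of Corollary \ref{Corrasy}, once the coefficient $c_{0,p}$ appearing there is checked to be universal on $[1,2]$. For the direct argument when $1\le p\le 2$: in Theorem \ref{LoMilGen} one has $b=\sup\{\left\vert\theta\right\vert_p:\theta\in S^{n-1}\}=n^{1/p-1/2}$ (the supremum attained at $n^{-1/2}(1,\dots,1)$), while $M=\mathbb{E}\left\vert g\right\vert_p$ satisfies $M\le(n\gamma_p)^{1/p}$ by Jensen and $M\ge c(n\gamma_p)^{1/p}$ by a Paley--Zygmund argument applied to $\left\vert g\right\vert_p^p=\sum_i\left\vert g_i\right\vert^p$, where $\gamma_q=\mathbb{E}\left\vert g_1\right\vert^q$ and both $\gamma_p$ and $\gamma_{2p}/\gamma_p^2$ lie between universal constants for $p\in[1,2]$. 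Hence $(M/b)^2\asymp n$ with universal constants, and Theorem \ref{LoMilGen} yields $d\asymp n\varepsilon^2$, which is the first line of the asserted bound.

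For $2<p<C_1\ln n$ I would substitute $\omega_i\equiv 1$ into the Gaussian quantities entering Theorem \ref{LoMain} and evaluate the resulting $\Gamma$-function expressions by Stirling's formula. Two asymptotics carry the argument: $\gamma_p^{1/p}\asymp\sqrt p$ turns the ``refined'' term of Theorem \ref{LoMain} into $pn^{2/p}\varepsilon^{2/p}$ --- the prefactor $pn^{2/p}$ being, up to constants, the old $(M/b)^2$, but with the exponent of $\varepsilon$ improved from $2$ to $2/p$ --- while $\gamma_{2p}/\gamma_p^2\asymp 2^p$ makes the normalized variance of $\left\vert g\right\vert_p$ of order $2^p/(p^2 n)$, producing the companion Dvoretzky bound of order $\varepsilon^2 p^2 n\,2^{-p}\ge c^p n\varepsilon^2$ for a universal $c<1/2$. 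The two enter as a minimum because Theorem \ref{LoMain}'s estimate splits according to whether $\varepsilon$ exceeds a threshold of the form $\mathrm{const}\cdot n^{-(p-2)/(2(p-1))}$, the refined term dominating in one range and the variance term in the other. The hypothesis $p<C_1\ln n$ is used only here: it gives $2^p\le n^{C_1\ln 2}$, so that the Jensen gap between $M$ and $(n\gamma_p)^{1/p}$ together with the remaining lower-order corrections in Theorem \ref{LoMain} are beaten by a fixed power of $n$ (for larger $p$ one uses Theorem \ref{Lo ell infty regime} instead); this is exactly why $c_2$ is allowed to depend on $C_1$.

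The real substance lies not in the corollary but in Theorem \ref{LoMain}: the passage from exponent $2$ to exponent $2/p$ on $\varepsilon$. Since $\left\vert g_1\right\vert^p$ has only a stretched-exponential tail (tail exponent $2/p<1$), no Bernstein-type estimate is available for $\left\vert g\right\vert_p^p$; instead one controls $\mathbb{E}\left\vert\,\left\vert g\right\vert_p-M\,\right\vert^q$ for a well-chosen exponent $q$ (equivalently, a truncation-and-moment argument) and feeds the outcome into the $(\varepsilon/4)$-net. This is the step where \cite{PVZ} used Talagrand's $L_1$--$L_2$ inequality and where the present approach is meant to be both simpler and to shed the $p^{-p}$ factor. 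Granting Theorem \ref{LoMain}, all that remains in Corollary \ref{Corrp} is the Stirling bookkeeping, carried out uniformly over $2<p<C_1\ln n$: routine, though a little delicate near the top of that range.
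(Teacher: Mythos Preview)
Your overall plan---specialize Theorem \ref{LoMain} at $r=0$---is exactly what the paper does (the corollary is stated without proof, as an immediate consequence). Your direct $M/b$ computation for $1\le p\le 2$ via Theorem \ref{LoMilGen} is fine and gives the claimed $cn\varepsilon^2$; one could equally just read this off Cases iii and ia of Theorem \ref{LoMain}.

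For $2<p<C_1\ln n$, however, your description of the extraction is off. Theorem \ref{LoMain} is not stated in terms of a general weight $(\omega_i)$ or in terms of Gaussian moments $\gamma_p$; the quantities $E$ and $F$ in Case ia are already explicit functions of $n,p,r,\varepsilon$. With $r=0$ one has
\[
E=\frac{c^p n(\ln n)^2\varepsilon^2}{(p+\ln n)^2},\qquad
F=\frac{cp\,n^{2/p}(\ln n)^{1+2/p}\varepsilon^{2/p}}{(1+n^{(2-p)/p})(p+\ln n)^{1+2/p}},
\]
and the only work is to use $p<C_1\ln n$ to replace $p+\ln n$ by $(1+C_1)\ln n$ and to note $1+n^{(2-p)/p}\le 2$. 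No Stirling, no $\Gamma$-function bookkeeping; this is also where (and only where) $c_2$ picks up its $C_1$-dependence.

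Your third paragraph misdescribes the mechanism behind Theorem \ref{LoMain}. The paper does \emph{not} bound moments $\mathbb{E}\big\vert\,\vert g\vert_p-M\,\big\vert^q$ or run a truncation-and-moment argument. It applies Gaussian concentration to the function $\psi(x)=\vert x\vert_p^p$ (not to $\vert x\vert_p$), after restricting $\psi$ to a convex set $A_1$ on which $\vert\nabla\psi(x)\vert=p\big(\sum x_{[i]}^{2(p-1)}\big)^{1/2}$ is controlled; the restriction is extended with the same Lipschitz constant, Schechtman's uniform bound (Theorem \ref{Schechtbound}) is applied, and finally the deviation of $\psi$ is pulled back to $\psi^{1/p}$ via $\vert 1-u^{1/p}\vert\le Cp^{-1}\vert 1-u\vert$. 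That last step is precisely what converts the exponent on $\varepsilon$ from $2$ to $2/p$. Since you explicitly grant Theorem \ref{LoMain}, this misdescription does not invalidate the corollary, but you should not present it as the content of the proof.
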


We end the paper with a result for general Lorentz norms.

\begin{theorem}
(refer to Theorem \ref{LoMain2} for a more detailed statement) A random embedding of $\left(\mathbb{R}^k,\left\vert\cdot\right\vert\right)$ into $\left(\mathbb{R}^n,\left\vert\cdot\right\vert_{\omega,p}\right)$ using a standard Gaussian matrix is, with probability at least $1-2e^{-d}$, an $\varepsilon$ almost isometry in a sense similar to (\ref{epsilon iso}) provided $k\leq d$, where $d$ is defined as follows: If $1<p<\infty$ set
\[
d=\left(1+\frac{1}{p-1}\right)^{-1}\min \left\{\frac{c^p\left( \sum_{i=1}^{n}\omega_i\left(\ln \frac{n}{i}\right)^{p/2}\right) ^{2}\varepsilon^2}{\sum_{i=1}^{n}\omega_i^2\left(\ln \frac{n}{i}\right)^{p-1}},
cB^{-1/p}\left( \sum_{i=1}^{n}\omega_i\left(\ln \frac{n}{i}\right)^{p/2}\right) ^{2/p}\varepsilon^{2/p}\right\}
\]
where
\[
B=\left\{ 
\begin{array}{ccc}
\sum_{i=1}^n\omega_i^2i^{-(p-1)}
 & : & 1< p <3/2\\ 
\left(\sum_{i=1}^n\omega_i^{\frac{2}{2-p}}\right)^{2-p} & : & 3/2\leq p<2\\
1 & : & 2\leq p<\infty
\end{array}%
\right. 
\]
and if $p=1$ set
\[
d=\frac{c\left( \sum_{i=1}^{n}\omega_i\left(\ln \frac{n}{i}\right)^{1/2}\right) ^{2}\varepsilon^2}{\sum_{i=1}^{n}\omega_i^2}
\]
\end{theorem}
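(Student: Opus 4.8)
The plan is to deduce this from Theorem~\ref{LoMain2} (equivalently, from the machinery underlying Theorem~\ref{LoMilGen}) by producing good two-sided estimates for the two Dvoretzky parameters $M$ and the Lipschitz constant $b$ associated to the norm $\left\vert\cdot\right\vert_{\omega,p}$, and then converting the $\varepsilon$-isometry statement of Theorem~\ref{LoMilGen} into the improved-$\varepsilon$ form using the concentration strategy of \cite{PVZ}. The critical dimension $d$ that appears in Theorem~\ref{LoMilGen} is (up to constants) $(M/b)^2\varepsilon^2$; the two-term minimum in the displayed formula reflects the fact that for $p>2-2r$-type norms one gets a \emph{second}, better bound on the critical dimension by exploiting superconcentration of $\left\vert Gx\right\vert_{\omega,p}$ around its mean, which costs a power $\varepsilon^{2/p}$ instead of $\varepsilon^2$ but gains a large factor in $n$.

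First I would compute $M=\mathbb{E}\left\vert Z\right\vert_{\omega,p}$ for $Z$ a standard Gaussian vector in $\mathbb{R}^n$. Since $\left\vert Z\right\vert_{\omega,p}^p=\sum_i\omega_i Z_{[i]}^p$ and the order statistics of $n$ i.i.d.\ standard normals satisfy $Z_{[i]}\asymp\sqrt{\ln(n/i)}$ with high probability (Lemma~\ref{normalorderstats}), one expects $M^p\asymp\sum_{i=1}^n\omega_i(\ln(n/i))^{p/2}$, which is exactly the numerator appearing (raised to $2/p$, since $M$ itself enters squared) in both branches of $d$. This step requires care at the top order statistics ($i$ small) and in controlling the $L_p$-to-$L_1$ passage for the mean, but it is essentially the content of Lemma~\ref{Lo med calculation}. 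For $b=\max_{\theta\in S^{n-1}}\left\vert\theta\right\vert_{\omega,p}$, a direct optimization (putting mass on the coordinates with largest weights) gives $b^p\asymp\max_m m^{-p/2}\sum_{i\le m}\omega_i$, and one checks $b=O(1)$ in the regimes considered; this feeds the ``old'' bound $d\gtrsim(M/b)^2\varepsilon^2$ which is the $\varepsilon^2$ term.

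The improved term — the $\varepsilon^{2/p}$ branch with the factor $B^{-1/p}$ — is where the real work lies, and I expect the variance (superconcentration) estimate to be the main obstacle. The idea, following the philosophy of \cite{PVZ} but with the simpler route the abstract advertises (avoiding Talagrand's $L_1$–$L_2$ inequality), is: the fluctuation of $\left\vert Gx\right\vert_{\omega,p}$ about $M\left\vert x\right\vert$ is governed not by the naive Lipschitz bound but by $\mathrm{Var}(\left\vert Z\right\vert_{\omega,p})$, and via the Gaussian Poincaré inequality $\mathrm{Var}(\left\vert Z\right\vert_{\omega,p})\lesssim\mathbb{E}\left\vert\nabla\left\vert Z\right\vert_{\omega,p}\right\vert^2$. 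Computing the gradient of $x\mapsto(\sum\omega_i x_{[i]}^p)^{1/p}$ and squaring, one is led to an expression of the shape $M^{2-2p}\sum_i\omega_i^2 Z_{[i]}^{2(p-1)}\asymp M^{2-2p}\sum_i\omega_i^2(\ln(n/i))^{p-1}$; dividing by $M^2$ and using the explicit $M$ above yields precisely the ratio $\bigl(\sum\omega_i(\ln(n/i))^{p/2}\bigr)^2\big/\sum\omega_i^2(\ln(n/i))^{p-1}$ in the first branch, while the constant $B$ absorbs a correction needed when $p<2$ (where $2(p-1)<p$ and the weights $\omega_i^{2/(2-p)}$ or $\omega_i^2 i^{-(p-1)}$ enter through Hölder's inequality applied to $\sum\omega_i^2 Z_{[i]}^{2(p-1)}$ against the uniform bound, or through a maximal-function estimate on the order statistics). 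The factor $(1+1/(p-1))^{-1}$ out front is the $p\to1$ degeneration of this Poincaré argument (the map is only $1/(p-1)$-Hölder-type smooth near the boundary), and the separate $p=1$ formula is obtained by the same bound with the gradient computation replaced by the Lipschitz constant of $\left\vert\cdot\right\vert_{\omega,1}$ directly, giving denominator $\sum\omega_i^2$.

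Finally I would feed the variance estimate into the standard $\varepsilon$-net / union-bound scheme on $S^{k-1}$ (as in \cite{MS,Sch2} and sharpened in \cite{PVZ}): one needs the one-point deviation inequality $\mathbb{P}(|\,\left\vert Gx\right\vert_{\omega,p}-M\,|>\varepsilon M)\le 2e^{-c d}$ to survive a union bound over a net of cardinality $e^{Ck}$, and the superconcentration upgrade replaces the Gaussian-Lipschitz tail $e^{-c\varepsilon^2 M^2/b^2}$ by one of the form $e^{-c(\varepsilon M^p/(\text{variance proxy}))^{2/p}}$ after interpolating the tail of $\left\vert Gx\right\vert_{\omega,p}^p$; optimizing the exponent in $\varepsilon$ produces the $\varepsilon^{2/p}$ and the stated $d$. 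Taking the minimum of the two admissible lower bounds for $d$ and invoking Theorem~\ref{LoMain2} gives the claimed probability $1-2e^{-d}$. The one genuinely delicate point — beyond bookkeeping — is obtaining the variance proxy with the clean closed form $B$ uniformly over $1<p<2$; I would handle $1<p<3/2$ and $3/2\le p<2$ separately exactly as the statement partitions them, using in the first range a crude $i^{-(p-1)}$ bound on the order statistics contribution and in the second range Hölder with exponents $(2/p, 2/(2-p))$.
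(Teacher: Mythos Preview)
Your outline captures the right quantities (the median $\mathbb{M}|Z|_{\omega,p}^p \asymp \sum \omega_i(\ln(n/i))^{p/2}$ and the gradient norm $p(\sum \omega_i^2 x_{[i]}^{2(p-1)})^{1/2}$), and you correctly anticipate the case split $1<p<3/2$, $3/2\le p<2$, $p\ge 2$ in the definition of $B$. But the mechanism you propose --- a Gaussian Poincar\'e variance bound on $|Z|_{\omega,p}$ fed into the usual net argument --- has a real gap: Poincar\'e only gives a variance, and a variance bound alone does not produce the exponential deviation inequality $\mathbb{P}\{|\psi(Z)-\mathbb{M}\psi(Z)|>s\}\le Ce^{-c(\cdot)}$ that must survive a union bound over $e^{Ck}$ net points. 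That passage from variance to sub-Gaussian tail is exactly where \cite{PVZ} invoke Talagrand's $L_1$--$L_2$ inequality, and the whole point here is to avoid it.

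The paper's route is different and you are missing its central device. One works with $\psi(x)=\sum\omega_i x_{[i]}^p$ (not the norm) and observes that $|\nabla\psi(x)|$ is controlled on the sublevel set $A_1=\{|x|_\sharp\le (4q/3)R^{1/(2(p-1))}\}$ of the (quasi-)norm $|x|_\sharp=(\sum\omega_i^2 x_{[i]}^{2(p-1)})^{1/(2(p-1))}$, where $R=A+C^pBt^{2(p-1)}$. One then \emph{restricts} $\psi$ to the convex set $A_1$, extends the restriction to a globally Lipschitz $\psi^*$ with the same Lipschitz constant, and applies Schechtman's uniform Gaussian concentration (Theorem~\ref{Schechtbound}) directly to $\psi^*$ --- this yields the exponential tail with no Poincar\'e and no Talagrand. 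A second net argument, this time in $|\cdot|_\sharp$, shows that $G\theta\in A_1$ for every $\theta\in S^{k-1}$, so $\psi^*(G\theta)=\psi(G\theta)$ on the whole sphere. Finally one transforms under $s\mapsto s^{1/p}$ to get back to $|G\theta|_{\omega,p}$, which is where the $\varepsilon^{2/p}$ branch and the two-term $\min$ appear (solving $t$ from $R^{1/2}t\asymp \varepsilon\,\mathbb{M}\psi$ with $R=A+Bt^{2(p-1)}$).

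Your account of the prefactor $(1+1/(p-1))^{-1}$ is also off. It does not come from any H\"older-smoothness degeneration of a Poincar\'e argument; it comes from the net argument for $|\cdot|_\sharp$. When $1<p<3/2$ the function $|\cdot|_\sharp$ is only a quasi-norm with constant $2^{1/(p-1)}$, so one must use a $(q/4)$-net with $q=2^{-1/(p-1)}$; its cardinality $(12/q)^k$ forces $k\lesssim t^2/\ln(12/q)\asymp t^2(1+1/(p-1))^{-1}$, which is exactly the displayed factor. (For $p\ge 3/2$, $|\cdot|_\sharp$ is a genuine norm and this factor is harmless.)
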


\section{\label{engine}The main engine: Gaussian concentration}

We make extensive use of the classical Gaussian concentration inequality. The simple proof of Maurey and Pisier is contained in \cite{Pis}.
\begin{theorem}
\label{clasgaus}Let $f:\mathbb{R}^n\rightarrow \mathbb{R}$ and let $X$ be a random vector in $\mathbb{R}^n$ with the standard normal distribution. Then for all $t\geq0$,
\[
\mathbb{P}\left\{\left\vert X-\mathbb{M}f(X)\right\vert>Ct\mathrm{Lip}(f) \right\}\leq 2\exp\left(-t^2\right)
\]
Assuming $\textrm{Lip}(f)<\infty$, the same holds true with $\mathbb{M}$ replaced by $\mathbb{E}$.
\end{theorem}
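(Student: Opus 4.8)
The plan is to follow the Maurey--Pisier semigroup interpolation argument (as in \cite{Pis}), which produces the concentration inequality from a sub-Gaussian bound on the Laplace transform of $f(X)-\mathbb{E}f(X)$. The first step is to reduce to the case of a smooth $f$ with $\sup_{x}\left\vert\nabla f(x)\right\vert\leq \mathrm{Lip}(f)<\infty$: convolving $f$ with a Gaussian density of small variance yields such an $f_{\delta}$ with $\mathrm{Lip}(f_{\delta})\leq\mathrm{Lip}(f)$ and $f_{\delta}\to f$ locally uniformly, so the tail bounds pass to the limit. The case $\mathrm{Lip}(f)=\infty$ is then either vacuous (for the median form the right-hand threshold $Ct\,\mathrm{Lip}(f)$ is infinite) or excluded by hypothesis (for the mean form).

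Next I would introduce an independent copy $Y$ of $X$ and the rotation $X_{\theta}=X\sin\theta+Y\cos\theta$, $\theta\in[0,\pi/2]$, so that $X_{0}=Y$, $X_{\pi/2}=X$, with $X'_{\theta}=X\cos\theta-Y\sin\theta$. The technical heart of the argument is the observation that, for each fixed $\theta$, the pair $(X_{\theta},X'_{\theta})$ consists of two independent standard Gaussian vectors, since $\mathbb{E}[(X_{\theta})_{i}(X'_{\theta})_{j}]=\delta_{ij}(\sin\theta\cos\theta-\cos\theta\sin\theta)=0$. Writing $f(X)-f(Y)=\int_{0}^{\pi/2}\langle\nabla f(X_{\theta}),X'_{\theta}\rangle\,d\theta$ and applying Jensen's inequality twice --- first to replace $\mathbb{E}f(X)$ by $\mathbb{E}[f(Y)\mid X]$ inside $\exp(\lambda\,\cdot\,)$, then to pull $\exp$ through the normalized $\theta$-average --- reduces matters to bounding $\mathbb{E}\exp\!\big(\tfrac{\pi\lambda}{2}\langle\nabla f(X_{\theta}),X'_{\theta}\rangle\big)$ for fixed $\theta$. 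Conditioning on $X_{\theta}$, this inner product is a centered Gaussian of variance $\left\vert\nabla f(X_{\theta})\right\vert^{2}\leq\mathrm{Lip}(f)^{2}$, so the expectation is at most $\exp\!\big(\tfrac{\pi^{2}\lambda^{2}}{8}\mathrm{Lip}(f)^{2}\big)$, giving $\mathbb{E}\exp(\lambda(f(X)-\mathbb{E}f(X)))\leq\exp\!\big(\tfrac{\pi^{2}\lambda^{2}}{8}\mathrm{Lip}(f)^{2}\big)$ for all $\lambda\in\mathbb{R}$.

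A Chernoff bound and optimization in $\lambda$ then give $\mathbb{P}\{f(X)-\mathbb{E}f(X)>s\}\leq\exp\!\big(-2s^{2}/(\pi^{2}\mathrm{Lip}(f)^{2})\big)$; replacing $f$ by $-f$ and taking a union bound yields the two-sided estimate, and setting $s=Ct\,\mathrm{Lip}(f)$ with $C=\pi/\sqrt{2}$ produces the exponent $-t^{2}$ and the factor $2$. To obtain the median version I would integrate the mean-centered tail bound to get $\mathbb{E}\left\vert f(X)-\mathbb{E}f(X)\right\vert^{2}\leq C'\mathrm{Lip}(f)^{2}$, deduce $\left\vert\mathbb{M}f(X)-\mathbb{E}f(X)\right\vert\leq\sqrt{2}\,(\mathbb{E}\left\vert f(X)-\mathbb{E}f(X)\right\vert^{2})^{1/2}\leq C''\mathrm{Lip}(f)$ by Chebyshev, and absorb this shift into the constant $C$, the inequality being trivial whenever $2e^{-t^{2}}\geq1$. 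I do not expect a genuine obstacle, since the statement is classical; the one step that must be handled with care is the covariance computation showing $(X_{\theta},X'_{\theta})$ are independent, on which the whole interpolation scheme rests, together with the routine constant bookkeeping in the passage from mean to median.
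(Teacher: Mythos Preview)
Your proposal is correct and is precisely the Maurey--Pisier interpolation argument that the paper cites (the paper gives no proof of its own, merely referring to \cite{Pis}). The steps you outline --- the rotation $X_\theta$, the independence of $(X_\theta,X'_\theta)$, Jensen twice, the conditional Gaussian moment, Chernoff, and the mean-to-median shift --- match the standard proof, so there is nothing to add.
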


 If $\left\Vert\cdot\right\Vert:\mathbb{R}^n\rightarrow[0,\infty)$ is a norm, then
\[
\textrm{Lip}\left(\left\Vert\cdot\right\Vert\right)=\sup \left\{\left\Vert\theta\right\Vert:\theta \in S^{n-1}\right\} 
\]
which can be seen by applying the triangle inequality. Denoting this supremum as $b\left(\left\Vert \cdot \right\Vert\right)$, Gaussian concentration implies that with probability at least $1-2\exp\left(-t^2\right)$,
\[
\mathbb{M}\left\Vert X \right\Vert-Ctb\left(\left\Vert \cdot \right\Vert\right)\leq\left\Vert X \right\Vert \leq \mathbb{M}\left\Vert X \right\Vert+Ctb\left(\left\Vert \cdot \right\Vert\right)
\]
For our purposes the right hand inequality $\left\Vert X \right\Vert \leq \mathbb{M}\left\Vert X \right\Vert+Ctb\left(\left\Vert \cdot \right\Vert\right)$ will be sufficient. The following result due to Schechtman\cite{Sch} provides a uniform bound over a sphere rather than just at a point and reduces to Theorem \ref{clasgaus} in the case $k=1$.
\begin{theorem}
\label{Schechtbound}Let $f:\mathbb{R}^n\rightarrow \mathbb{R}$ and let $G$ be an $n\times k$ random matrix with i.i.d standard normal random variables as entries. Let $t\geq0$ and assume that $k\leq ct^2$. Then with probability at least $1-2\exp\left(-ct^2\right)$ the following event occurs: For all $\theta\in S^{k-1}$,
\[
\left\vert f\left(G\theta\right)-\mathbb{M}f\left(G\theta\right) \right\vert\leq t\textrm{Lip}(f)
\]
\end{theorem}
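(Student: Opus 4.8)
The plan is to follow the now-standard route: a Gaussian concentration step at the level of the matrix $G$ reduces the uniform statement to the control of a single expectation, which is then handled by a Gaussian comparison argument. By homogeneity I may assume $\mathrm{Lip}(f)=1$. Since the columns of $G$ are independent standard normals and $|\theta|=1$, the vector $G\theta$ is $N(0,I_n)$ for every $\theta\in S^{k-1}$; hence $\mathbb{M}f(G\theta)$ is the same for all $\theta$, and I write $\mu$ for this common value.

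Put $A(G)=\sup_{\theta\in S^{k-1}}|f(G\theta)-\mu|$. The point is that, as a function of the $nk$ Gaussian entries of $G$ equipped with the Euclidean (Hilbert--Schmidt) norm, $A$ is $1$-Lipschitz: $|A(G)-A(G')|\le\sup_{\theta}|f(G\theta)-f(G'\theta)|\le\sup_{\theta}|(G-G')\theta|=\|G-G'\|_{\mathrm{op}}\le\|G-G'\|_{\mathrm{HS}}$. Applying Theorem~\ref{clasgaus} to $A$ on $\mathbb{R}^{nk}$ therefore gives $A\le\mathbb{M}A+C\lambda$ with probability at least $1-2e^{-\lambda^2}$, for every $\lambda\ge0$. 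Taking $\lambda$ a suitable constant multiple of $t$, the theorem follows once we know that $\mathbb{M}A\le\frac{1}{2}t$ whenever $k\le ct^2$; and since the assertion is vacuous when $t$ is below a universal constant, it suffices to prove the scale-free inequality $\mathbb{M}A\le C(\sqrt{k}+1)$ and then fix the constants at the end. Here $\mathbb{M}A$ and $\mathbb{E}A$ differ by at most an absolute constant, and a routine estimate bounds $\mathbb{E}A$ by $C+\mathbb{E}\big[\sup_{\theta}f(G\theta)-\inf_{\theta}f(G\theta)\big]$, using that each $f(G\theta)$ has mean within a constant of $\mu$ together with the $1$-Lipschitzness of $G\mapsto\sup_{\theta}f(G\theta)$ and $G\mapsto\inf_{\theta}f(G\theta)$ in $G$.

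The substantive step is thus the oscillation bound: $\mathbb{E}\sup_{\theta}f(G\theta)\le\mathbb{E}f(Ge_1)+\mathbb{E}|h|$ and $\mathbb{E}\inf_{\theta}f(G\theta)\ge\mathbb{E}f(Ge_1)-\mathbb{E}|h|$, where $h\sim N(0,I_k)$, which gives expected oscillation at most $2\mathbb{E}|h|\le2\sqrt{k}$. For this I would invoke Gordon's Gaussian minimax comparison theorem. Writing the $1$-Lipschitz function in the form $f(v)=\inf_{y\in\mathbb{R}^n}\sup_{|w|\le1}\big(f(y)-\langle y,w\rangle+\langle v,w\rangle\big)$ recasts $\inf_{x\in S^{k-1}}f(Gx)$ as the minimax over $(x,y)$ and $w$ of the bilinear Gaussian field $\langle w,Gx\rangle$ plus a deterministic term. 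Gordon's inequality replaces $\langle w,Gx\rangle$ by $|x|\langle g,w\rangle+|w|\langle h,x\rangle$ (with $g\sim N(0,I_n)$ and $h\sim N(0,I_k)$ independent) while only decreasing the expectation; the resulting quantity collapses, since the supremum over $w\in B_2^n$ equals $\big(|g-y|+\langle h,x\rangle\big)_+$, the infimum over $x\in S^{k-1}$ turns $\langle h,x\rangle$ into $-|h|$, and $\inf_{y}\big(f(y)+(|g-y|-|h|)_+\big)\ge f(g)-|h|$ because $\mathrm{Lip}(f)\le1$. Taking expectations gives the lower bound on $\mathbb{E}\inf_{\theta}f(G\theta)$, and applying the same reasoning to $-f$ (or the dual comparison) gives the upper bound on $\mathbb{E}\sup_{\theta}f(G\theta)$. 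In the case most relevant to this paper, where $f$ is a norm, this step can instead be read off from Chevet's inequality.

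I expect the only real difficulty to lie in this last step: setting up Gordon's theorem cleanly --- truncating the inner infimum from $\mathbb{R}^n$ to a large ball, on which it is attained with overwhelming probability, and verifying the comparison hypotheses for the bilinear field --- and, conceptually, recognising that it is precisely this comparison, rather than any net-and-union-bound or concentration-on-the-sphere argument, that produces the hypothesis $k\le ct^2$ with no spurious logarithmic factor; every naive discretisation costs a factor of $\log(1/\varepsilon)$ (equivalently $\log n$ in the Dvoretzky application).
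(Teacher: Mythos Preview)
The paper does not prove Theorem~\ref{Schechtbound}; it is quoted as a known result of Schechtman \cite{Sch} (following Gordon \cite{Gord}) and used as a black box throughout. So there is no in-paper argument to compare your proposal against.

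That said, your outline is correct and is essentially the Gordon--Schechtman proof. The two-step scheme --- concentrate the $1$-Lipschitz functional $G\mapsto\sup_{\theta\in S^{k-1}}|f(G\theta)-\mu|$ on $(\mathbb{R}^{nk},|\cdot|_{\mathrm{HS}})$, then bound its expectation by $C\sqrt{k}$ via Gordon's minimax comparison applied through the representation $f(v)=\inf_{y}\bigl(f(y)+|v-y|\bigr)$ of a $1$-Lipschitz function --- is exactly how the result is obtained. Your identification of where the technical work lies is accurate: one must compactify the infimum over $y$, and the literal verification of Gordon's covariance hypotheses typically requires adjoining an auxiliary scalar Gaussian (e.g.\ a term $|w|\xi$ with $\xi\sim N(0,1)$) to equalise second moments before the comparison goes through; this extra term is harmless in expectation but must be present for the bookkeeping.

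One small correction to your closing methodological remark: the factor that Gordon's comparison removes, relative to Milman's original net-on-the-sphere argument, is indeed a $\log(1/\varepsilon)$ in the sufficient condition on $k$. But this is not ``equivalently $\log n$ in the Dvoretzky application'' --- the two quantities are unrelated in general, and the loss sits purely in the $\varepsilon$-dependence.
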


\section{Key methods}

We will deal with functions that may not be Lipschitz, or whose Lipschitz constant is not representative of the typical behaviour of the function. For such functions we will need to prove deviation inequalities for $f(X)$ about $\mathbb{M}f(X)$, where $X$ is a standard normal random vector as in Section \ref{engine}. In order to do so, it will be useful to restrict the function $f$ to a set $K$ with the following two properties:

\medskip

$\bullet$ $\textrm{Lip}\left(f|_K\right)$ is nicely bounded

$\bullet$ $\mathbb{P}\left\{X\notin K\right\}$ is small (here $X$ is normally distributed as in Theorem \ref{clasgaus})

\medskip

Exactly how one interprets `nicely bounded' and `small' may depend on the situation, and the reader will see the details in the proofs of Theorems \ref{LoMain} and \ref{LoMain2}. It follows from elementary metric space theory that $f|_K$ can be extended to a function $F:\mathbb{R}^n\rightarrow \mathbb{R}$ such that $\textrm{Lip}(F)=\textrm{Lip}\left(f|_K\right)$. One can then apply Gaussian concentration to $F$ and transfer the result back to $f$ since $f(X)=F(X)$ with high probability. This procedure appears in Bobkov, Nayar and Tetali \cite{BNT} and is further explored in \cite{Fr}.

The original way of applying concentration of measure to prove Dvoretzky's theorem, as in the classical works of Milman and Schechtman, e.g. \cite{Mil, MS, Sch}, was to study concentration of $\left\Vert X\right\Vert$ directly using Lipschitz properties of $\left\Vert \cdot\right\Vert$ (where $\left\Vert \cdot\right\Vert$ is the norm of the space in question). One actually considered a random point on the sphere as opposed to a Gaussian random vector, the Gaussian approach being made popular by Pisier \cite{Pis}, but the point is that regardles of the randomness used, the function under consideration was the norm. A trick that will be useful in the context of $\left\Vert \cdot\right\Vert=\left\vert \cdot\right\vert_{\omega,p}$ is to study concentration of $\left\vert X\right\vert_{\omega,p}^p=\sum \omega_iX_{[i]}^p$ instead of $\left\vert X\right\vert_{\omega,p}$ (using Gaussian concentration and the procedure from \cite{BNT} just mentioned), and then to convert the result back to a bound on $\left\vert X\right\vert_{\omega,p}$ by transforming the distribution under the action of $s\mapsto s^{1/p}$. As functions on $\mathbb{R}^n$, $\left\vert \cdot\right\vert_{\omega,p}$ and $\left\vert \cdot\right\vert_{\omega,p}^p$ are fundamentally different in terms of their local-Lipschitz properties: the first function achieves its Lipschitz constant on any neighbourhood of the origin, while for the second function, the problematic points where the norm of the gradient is large have been moved far away from the origin so that a convexity argument can be used for points within a certain convex body containing the origin.

So, we will need to bound the distribution of a gradient, which comes down to bounding the expression $\sum i^{-2r}X_{[i]}^{2(p-1)}$. Since the terms of this sum are not independent, one cannot use the classical theory of sums of independent random variables. For $p\geq3/2$ one can write such a quantity in terms of a norm and use Gaussian concentration applied to norms. For $1\leq p<3/2$ one cannot write the gradient as a function of a norm, and one therefore cannot use the equation $\textrm{Lip}\left(\left\Vert\cdot\right\Vert\right)=b\left(\left\Vert \cdot \right\Vert\right)$. This causes difficulties, but one can bound the gradient above by a function of a norm, which is an interesting problem in its own right, especially in the case $p=2-2r$. We postpone this discussion until Section \ref{linords}.

\section{Notation and once-off explanations}

The symbols $C$ and $c$ denote positive universal constants that may take on
different values at each appearance. $\mathbb{M}$ and $\mathbb{E}$ denote median and expected value. $n$ and $k$ will typically denote natural numbers and this will not always be stated explicitly but should be clear from the context. $\textrm{Lip}(f)\in \left[0,\infty\right]$ denotes the Lipschitz constant of any function $f:\mathbb{R}^n \rightarrow \mathbb{R}$ with respect to the Euclidean norm. $\sum_{i=a}^bf(i)$ denotes summation over all $i\in\mathbb{N}$ such that $a\leq i \leq b$, regardless of whether $a,b\in\mathbb{N}$. $1_{\{\cdot\}}$ denotes the indicator function of a set or condition. When proving a probability bound of the form $C\exp\left(-ct^2\right)$, we may take $C$ sufficiently large and assume in the proof that, say, $t\geq1$, because for $t<1$ the resulting probability bound is greater than $1$ and the result holds trivially. After such a bound is proved we may replace $C$ with $2$ using the fact that there exists $c'>0$ such that 
\[
\min\left\{1,C\exp\left(-ct^2\right)\right\}\leq 2\exp\left(-c't^2\right)
\]
Lastly, by making an all-round change of variables we may take $c'$ to be, say, $1$ or $1/2$. The constants in the final probability bound will therefore (without further explanation) not always match what appears to come from the proof.

\section{\label{Lo estimating}Lemmas}
We start with basic estimates for the lower incomplete gamma function suited to our purposes.
\begin{lemma}
\label{incomplete gamma}For all $b,q\in \left[ 0,\infty \right)$,
\begin{eqnarray*}
c^{1+q}\min\left\{1+q,b\right\}^{1+q}&\leq& \int_0^b e^{-\omega}\omega^qd\omega\leq C^{1+q}\min\left\{1+q,b\right\}^{1+q}\\
\frac{ce^bb^{1+q}}{1+q+b}&\leq& \int_0^b e^{\omega}\omega^qd\omega \leq \frac{Ce^bb^{1+q}}{1+q+b}
\end{eqnarray*}
\end{lemma}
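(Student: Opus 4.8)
The plan is to treat the two pairs of inequalities independently; in each case the integrand is essentially monotone on the relevant range, so one compares the integral to the size of the integrand near the endpoint that dominates it.

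\textbf{The first pair.} The integrand $e^{-\omega}\omega^{q}$ on $[0,\infty)$ increases then decreases with peak at $\omega=q$, and $b\mapsto\int_{0}^{b}e^{-\omega}\omega^{q}\,d\omega$ is nondecreasing. For the upper bound I would combine the two crude estimates $\int_{0}^{b}e^{-\omega}\omega^{q}\,d\omega\le\int_{0}^{b}\omega^{q}\,d\omega=b^{1+q}/(1+q)\le b^{1+q}$ and $\int_{0}^{b}e^{-\omega}\omega^{q}\,d\omega\le\Gamma(1+q)$, together with a Stirling-type bound obtained by $\Gamma(1+q)=\int_{0}^{\infty}(\omega^{q}e^{-\omega/2})e^{-\omega/2}\,d\omega\le 2\sup_{\omega\ge0}(\omega^{q}e^{-\omega/2})=2(2q/e)^{q}\le 2(1+q)^{1+q}\le C^{1+q}(1+q)^{1+q}$; taking the smaller of the two gives $C^{1+q}\min\{1+q,b\}^{1+q}$. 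For the lower bound, first suppose $b\le1+q$ and restrict to $[b/2,b]$, where $e^{-\omega}\omega^{q}\ge e^{-b}(b/2)^{q}\ge e^{-(1+q)}(b/2)^{q}$; this already gives $\int_{0}^{b}e^{-\omega}\omega^{q}\,d\omega\ge (b/2)^{1+q}e^{-(1+q)}=(2e)^{-(1+q)}b^{1+q}$. If instead $b>1+q$, monotonicity gives $\int_{0}^{b}\ge\int_{0}^{1+q}$, which by the previous case is at least $(2e)^{-(1+q)}(1+q)^{1+q}$. In both cases this is $c^{1+q}\min\{1+q,b\}^{1+q}$.

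\textbf{The second pair.} Now the integrand $e^{\omega}\omega^{q}$ is increasing on $[0,b]$, so the mass sits near $\omega=b$ with effective width governed by the logarithmic derivative $(\omega+q\ln\omega)'=1+q/\omega\ge1+q/b$. For the upper bound I would split on the size of $b$: if $b\le1$ then $e^{\omega}\le e^{b}$ yields $\int_{0}^{b}e^{\omega}\omega^{q}\,d\omega\le e^{b}b^{1+q}/(1+q)\le 2e^{b}b^{1+q}/(1+q+b)$ since $b\le1\le1+q$; if $b>1$ then, using that $\omega\mapsto\omega+q\ln\omega$ is concave (second derivative $-q/\omega^{2}\le0$) and hence lies below its tangent at $\omega=b$, one gets $e^{\omega}\omega^{q}\le e^{b}b^{q}e^{-(1+q/b)(b-\omega)}$, and integrating the right-hand side over $\omega\le b$ gives $\int_{0}^{b}e^{\omega}\omega^{q}\,d\omega\le e^{b}b^{q}\cdot\frac{b}{b+q}=\frac{e^{b}b^{1+q}}{b+q}\le\frac{2e^{b}b^{1+q}}{1+q+b}$, the last step because $b+q>1$. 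For the lower bound I would restrict to $[b-\delta,b]$ with $\delta=b/(1+q+b)$, so that $0<\delta<b$ and $\delta\le1$: there $e^{\omega}\ge e^{b-\delta}\ge e^{b-1}$, while $\omega^{q}\ge b^{q}(1-\delta/b)^{q}=b^{q}\left(1-\frac{1}{1+q+b}\right)^{q}\ge b^{q}e^{-q/(q+b)}\ge b^{q}e^{-1}$ using $\ln(1-x)\ge-x/(1-x)$; hence $\int_{0}^{b}e^{\omega}\omega^{q}\,d\omega\ge\delta\,e^{b-1}b^{q}e^{-1}=e^{-2}\,\frac{e^{b}b^{1+q}}{1+q+b}$.

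I expect the only genuine subtlety to be the $+1$ in the denominator $1+q+b$: the naive Laplace heuristic at $\omega=b$ produces the width $b/(b+q)$, which agrees with $b/(1+q+b)$ only once $b+q$ is bounded below, and it is exactly this mismatch that forces the separate (and trivial) treatment of the regime $b\le1$ in the upper bound. Likewise, in the first pair one must check that the lower bound loses only a multiplicative factor of the form $c^{1+q}$ rather than something like $1/\Gamma(1+q)$, which is why it is cleanest to reduce by monotonicity to the case $b=\min\{1+q,b\}$ and then estimate on a fixed proportion of $[0,b]$. Everything else is a routine one-variable computation, and the constants produced ($c=(2e)^{-1}$, $e^{-2}$ and $C=2$) are far from optimal but of the required universal type.
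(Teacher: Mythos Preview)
Your proof is correct and follows the same underlying ideas as the paper: for the first integral you use unimodality of $e^{-\omega}\omega^{q}$ together with a Stirling bound on $\Gamma(1+q)$, and for the second you linearize $\omega+q\ln\omega$ at $\omega=b$ via concavity. The differences are organizational. In the first pair the paper splits into the regimes $b\le q$, $q<b<1+q$ (with a further sub-split on $q\gtrless 1$), and $b\ge 1+q$, bounding by rectangles on each; you instead reduce by monotonicity to the single case $b\le 1+q$ and use the product bound $e^{-\omega}\omega^{q}\ge e^{-b}(b/2)^{q}$ on $[b/2,b]$, which avoids the extra case distinctions and yields the same $c^{1+q}$ loss. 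In the second pair the paper handles both directions by sandwiching $\ln\omega$ between its tangent at $b$ and the line $2(\omega-b)/b+\ln b$ on $[b/2,b]$, then invokes the two-sided estimate $(e^{z}-1)/z\asymp e^{z}/(1+z)$ to produce the factor $1/(1+q+b)$ in one stroke; you keep the tangent-line argument for the upper bound but replace the secant step by restricting to $[b-\delta,b]$ with $\delta=b/(1+q+b)$ and estimating pointwise. Your route makes the appearance of the ``$+1$'' in $1+q+b$ completely explicit (via the separate $b\le 1$ case and the choice of $\delta$), whereas the paper hides it inside the $(e^{z}-1)/z$ estimate. Neither approach dominates; yours is slightly more self-contained, the paper's slightly more uniform.
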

\begin{proof}
The first integrand increases on $[0,q]$ and decreases on $[q,\infty)$, so for $b\leq q$, comparing the integral to the area of a large rectangle,
\[
\int_0^b e^{-\omega}\omega^qd\omega \leq be^{-b}b^q\leq b^{1+q}
\]
while for $b\geq1+q$,
\[
\int_0^b e^{-\omega}\omega^qd\omega \leq \Gamma(1+q)\leq C^{1+q}(1+q)^{1+q}\leq C^{1+q}b^{1+q}
\]
If $q\geq1$ this also holds for $q<b<1+q$, since in that case $(1+q)^{1+q}\leq C^{1+q}b^{1+q}$. So all that remains for the upper bound is the case where $0\leq q <1$ and $q<b<1+q$, which implies
\[
\int_0^b e^{-\omega}\omega^qd\omega \leq \int_0^b \omega^qd\omega \leq b^{1+q}
\]
We now consider the lower bound. For $b\leq q$, comparing the integral to the area of a smaller rectangle,
\[
\int_0^b e^{-\omega}\omega^qd\omega \geq \frac{b}{2}e^{-b/2}\left(\frac{b}{2}\right)^q\geq c^{1+q}b^{1+q}
\]
For $b\geq q$ and $q\geq 1$, using what we have just proved,
\[
\int_0^b e^{-\omega}\omega^qd\omega \geq \int_0^q e^{-\omega}\omega^qd\omega \geq c^{1+q}q^{1+q}\geq c^{1+q}(1+q)^{1+q}
\]
For $b\geq q$ and $0\leq q< 1$, we consider two sub-cases: firstly $b\leq 1+q$, in which case
\[
\int_0^b e^{-\omega}\omega^qd\omega \geq c\int_0^b \omega^qd\omega \geq c^{1+q}b^{1+q}
\]
and secondly $b> 1+q$, in which case
\[
\int_0^b e^{-\omega}\omega^qd\omega \geq c \int_0^1 \omega^qd\omega \geq c \geq c^{1+q}(1+q)^{1+q}
\]
The second integral with $e^\omega$ instead of $e^{-\omega}$ can be estimated by writing $e^{\omega}\omega^q=\exp\left( \omega+q\ln \omega\right)$ and using $\ln \omega \leq (\omega-b)/b+\ln b$, valid for all $\omega \in (0,b]$, and $\ln \omega \geq 2(\omega-b)/b+\ln b$, valid for all $\omega \in [b/2,b]$. Here we also use $ce^z/(1+z)\leq(e^z-1)/z\leq Ce^z/(1+z)$ valid for all $z>0$.
\end{proof}

We will use the fact that for any non-increasing function $f:[1,n]\rightarrow \mathbb{R}$,
\[
\frac{1}{2}\left(f(1)+\int_1^nf(x)dx\right)\leq\sum_{i=1}^nf(i)\leq f(1)+\int_1^nf(x)dx
\]

\begin{lemma}
\label{Lo basic sum bound}For all $a,q\in \left[ 0,\infty \right) $ and all $n\geq 2$ the following is
true: If $a\in \left[ 0,1\right] $ then
\[
\sum_{i=1}^{n }i^{-a}\left( \ln \frac{n}{i}\right) ^{q} \leq \frac{C^{1+q}n^{1-a}\left( 1+q\right) ^{1+q}\left( \ln n\right) ^{1+q}}{%
\left( \left( 1-a\right) \ln n+1+q\right) ^{1+q}}
\]%
and if $a\in %
\left[ 1,\infty \right) $ the sum is bounded above by
\[
\frac{C\left( \ln n\right) ^{1+q}}{\left( a-1\right) \ln n+1+q}+\left( \ln
n\right) ^{q}
\]
The corresponding lower bounds hold by replacing $C$ with $c$. When $q=0$ and $i=n$ in the sum, we consider $0^0=1$.
\end{lemma}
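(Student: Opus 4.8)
\textbf{Proof plan for Lemma \ref{Lo basic sum bound}.}
The plan is to convert the sum $\sum_{i=1}^n i^{-a}(\ln(n/i))^q$ into an integral using the elementary sandwich inequality stated just above the lemma, namely $\tfrac12(f(1)+\int_1^n f) \le \sum_{i=1}^n f(i) \le f(1)+\int_1^n f$, applied to $f(x)=x^{-a}(\ln(n/x))^q$. This function is non-increasing on $[1,n]$: both factors $x^{-a}$ and $(\ln(n/x))^q$ are non-increasing in $x$ for $a\ge 0$, $q\ge 0$. So the problem reduces to two things: estimating the boundary term $f(1)=(\ln n)^q$, which already appears on the right-hand side of the claimed bound, and estimating the integral $I:=\int_1^n x^{-a}(\ln(n/x))^q\,dx$.

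For the integral, the natural move is the substitution $x=ne^{-\omega}$, so that $\ln(n/x)=\omega$, $dx=-ne^{-\omega}\,d\omega$, and as $x$ runs from $1$ to $n$, $\omega$ runs from $\ln n$ down to $0$. This gives
\[
I=\int_0^{\ln n} (ne^{-\omega})^{-a}\,\omega^q\,n e^{-\omega}\,d\omega
= n^{1-a}\int_0^{\ln n} e^{-(1-a)\omega}\,\omega^q\,d\omega.
\]
Now I split on the sign of $1-a$. If $a\in[0,1]$ write $1-a=:s\ge 0$; rescaling $\omega = u/s$ (when $s>0$) turns the integral into $s^{-(1+q)}\int_0^{s\ln n} e^{-u}u^q\,du$, which is exactly the lower incomplete gamma integral controlled by the first display of Lemma \ref{incomplete gamma} with $b=s\ln n=(1-a)\ln n$. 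That lemma gives matching upper and lower bounds of order $\min\{1+q,(1-a)\ln n\}^{1+q}$, so
\[
I \asymp n^{1-a}\,\frac{\min\{1+q,(1-a)\ln n\}^{1+q}}{(1-a)^{1+q}}
= n^{1-a}\,\frac{\big((1+q)\ln n\big)^{1+q}}{\big((1-a)\ln n + (1+q)\big)^{1+q}}\cdot(\text{const})^{1+q},
\]
using the algebraic identity $\min\{A,B\}/(\text{stuff})$; more precisely $\dfrac{\min\{1+q,(1-a)\ln n\}^{1+q}}{(1-a)^{1+q}} \asymp \dfrac{(1+q)^{1+q}(\ln n)^{1+q}}{((1-a)\ln n+1+q)^{1+q}}$ because $\min\{\alpha,\beta\}\asymp \alpha\beta/(\alpha+\beta)$. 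The $a=1$ (i.e. $s=0$) case is the direct integral $\int_0^{\ln n}\omega^q\,d\omega = (\ln n)^{1+q}/(1+q)$, which is the $a\to 1$ limit of both the displayed formulas and can be folded in by continuity or treated as a degenerate case. If $a\in[1,\infty)$, write $a-1\ge 0$ and substitute the other way: $I=n^{1-a}\int_0^{\ln n}e^{(a-1)\omega}\omega^q\,d\omega$, and now the \emph{second} display of Lemma \ref{incomplete gamma} applies (with the factor $e^{\omega}$), with $b=(a-1)\ln n$ after rescaling $\omega=u/(a-1)$ when $a>1$; it yields $\int_0^{b}e^u u^q\,du \asymp e^b b^{1+q}/(1+q+b)$. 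Tracking the prefactors, $n^{1-a}\cdot (a-1)^{-(1+q)} e^{(a-1)\ln n}\big((a-1)\ln n\big)^{1+q}/(1+q+(a-1)\ln n) = (\ln n)^{1+q}/\big((a-1)\ln n+1+q\big)$ up to constants, which together with the boundary term $f(1)=(\ln n)^q$ gives precisely the claimed upper bound $C(\ln n)^{1+q}/((a-1)\ln n+1+q)+(\ln n)^q$; the lower bound follows identically with $c$ in place of $C$ (and keeping the $\tfrac12 f(1)$ term, which is $\ge c(\ln n)^q$), noting that in the lower bound one may simply drop whichever of the two nonnegative terms is not needed.

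The main obstacle I anticipate is purely bookkeeping: matching the output of Lemma \ref{incomplete gamma}, which is phrased with $\min\{1+q,b\}^{1+q}$, against the rational-in-$\ln n$ form $\frac{(1+q)^{1+q}(\ln n)^{1+q}}{((1-a)\ln n+1+q)^{1+q}}$ that appears in the statement, and absorbing all the $(1-a)^{-(1+q)}$ or $(a-1)^{-(1+q)}$ factors into the universal constants raised to the power $1+q$. One must check the two regimes $(1-a)\ln n \le 1+q$ and $(1-a)\ln n \ge 1+q$ separately and verify the elementary inequality $\min\{\alpha,\beta\} \asymp \alpha\beta/(\alpha+\beta)$ transfers through the $(1+q)$-th power with only a constant$^{1+q}$ loss. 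A minor additional point is the edge convention $0^0=1$ when $q=0$ and $i=n$ (so that the $i=n$ term contributes $n^{-a}$, not $0$), and handling small cases like $n=2$ where the integral range $[0,\ln 2]$ is short — but these are absorbed by choosing the universal constants appropriately, exactly as flagged in the Notation section. The degenerate cases $a=1$ (boundary between the two displayed bounds) and $q=0$ should be checked to confirm the formulas agree in the limit, but no genuine difficulty is expected there.
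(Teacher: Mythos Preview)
Your approach is essentially the same as the paper's: compare the sum to an integral, substitute to reduce to the incomplete gamma integrals of Lemma~\ref{incomplete gamma}, and use $\min\{\alpha,\beta\}\asymp \alpha\beta/(\alpha+\beta)$ to rewrite the answer in the stated form. Your substitution $\omega=\ln(n/x)$ is in fact a bit cleaner than the paper's $s=a\ln(n/x)$, which carries around an extra $a^{-(1+q)}$ factor.

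One small correction: you write that the boundary term $f(1)=(\ln n)^q$ ``already appears on the right-hand side of the claimed bound,'' but that is only true in the case $a\ge 1$. For $a\in[0,1]$ the statement has a single term and you must check that $(\ln n)^q$ is dominated by it; the paper does this explicitly by noting
\[
\frac{C^{1+q}n^{1-a}(1+q)^{1+q}(\ln n)^{1+q}}{((1-a)\ln n+1+q)^{1+q}}
=\frac{C^{1+q}n^{1-a}(\ln n)^{1+q}}{\bigl(1+\tfrac{(1-a)\ln n}{1+q}\bigr)^{1+q}}
\ge (\ln n)^q,
\]
which amounts to $(1+u/(1+q))^{1+q}\le C^{1+q}e^u\ln n$ for $u=(1-a)\ln n$. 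This is exactly the kind of bookkeeping you anticipated, so no real gap.
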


\begin{proof}
We focus on the upper bounds; the lower bounds follow the same steps. Integrals are estimated using Lemma \ref{incomplete gamma}, and we use the fact that $\min\{x,y\}$ is the same order of magnitude as $xy/(x+y)$. First, let $a\in \left( 0,\infty \right) $. Peeling off the first
term, comparing the remaining sum to an integral using monotonicity, and
setting $e^{s/a}=n/x$,%
\[
n^{-a}\sum_{i=1}^{n}\left( \frac{%
n}{i}\right) ^{a}\left( \ln \frac{n}{i}\right) ^{q}\leq \left( \ln n\right)
^{q}+\frac{n^{1-a}}{a^{1+q}}\int_0^{a\ln n}\exp \left( \left( 1-\frac{%
1}{a}\right) s\right) s^{q}ds 
\]%
If $a\in \left( 1,\infty \right) $ set $w=\left( 1-1/a\right) s$
to get%
\[
\left( \ln n\right) ^{q}+\frac{n^{1-a}}{\left( a-1\right) ^{1+q}}\int_0^{\left( a-1\right) \ln n}e^{w}w^{q}dw 
\]%
If $a=1$ we get $\left( \ln n\right) ^{q}+\int_{0}^{\ln n}s^{q}ds$ which can be absorbed into either the case $a\in \left( 0,1\right) $ or the case $a\in \left(
1,\infty \right) $. If $a\in \left( 0,1\right) $ then set $w=-\left( 1-1/a\right) s$ to get%
\[
\left( \ln n\right) ^{q}+\frac{n^{1-a}}{\left( 1-a\right) ^{1+q}}\int_0^{\left( 1-a\right) \ln n}e^{-w}w^{q}dw
\]%
If $a=0$, setting $w=\ln \left( n/x\right) $,%
\[
\sum_{i=1}^{n}i^{-a}\left( \ln 
\frac{n}{i}\right) ^{q}\leq \left( \ln n\right) ^{q}+\int_{1}^{n}\left(
\ln \frac{n}{x}\right) ^{q}dx\leq \left( \ln n\right) ^{q}+n\int_0^{\ln n}e^{-w}w^{q}dw 
\]
For $a\in \left[ 0,1\right]$ the factor $\left(\ln n \right)^{q}$ gets absorbed into the remaining term since
\[
\frac{C^{1+q}n^{1-a}\left( 1+q\right) ^{1+q}\left( \ln n\right) ^{1+q}}{%
\left( \left( 1-a\right) \ln n+1+q\right) ^{1+q}}=\frac{C^{1+q}n^{1-a}\left( \ln n\right) ^{1+q}}{%
\left(1+ \frac{\left( 1-a\right) \ln n}{1+q}\right) ^{1+q}}
\]
\end{proof}

The following lemma interpolates between the case $a=1$ and $a\neq 1$.
\begin{lemma}
\label{Lo basic power int}For all $\left(
a,T\right) \in \mathbb{R}\times \left[ 1,\infty \right) $,%
\[
c\frac{1+T^{1-a}}{1+\left\vert 1-a\right\vert \ln T}\ln T\leq
\int_{1}^{T}x^{-a}dx\leq C\frac{1+T^{1-a}}{1+\left\vert 1-a\right\vert \ln T}%
\ln T 
\]
\end{lemma}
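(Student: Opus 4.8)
The plan is to reduce the estimate to the exact evaluation of the integral together with one elementary two-sided bound. If $T=1$ both sides vanish, so assume $T>1$. If $a=1$ then $\int_1^T x^{-a}\,dx=\ln T$, while the claimed expression equals $2\ln T$; the inequality then holds with $c\le 1/2\le C$. Hence from now on $a\neq 1$, and we have the exact formula
\[
\int_1^T x^{-a}\,dx=\frac{T^{1-a}-1}{1-a}=\frac{|T^{1-a}-1|}{|1-a|},
\]
the last equality because, for $T>1$, the numerator $T^{1-a}-1$ and the denominator $1-a$ always have the same sign.

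Next I would substitute $z=|1-a|\ln T>0$, so that $|1-a|=z/\ln T$ and $T^{1-a}=e^{z}$ when $a<1$ while $T^{1-a}=e^{-z}$ when $a>1$. In the case $a<1$ this gives
\[
\int_1^T x^{-a}\,dx=\ln T\cdot\frac{e^{z}-1}{z},
\]
and I would invoke the elementary two-sided bound $ce^{z}/(1+z)\le(e^{z}-1)/z\le Ce^{z}/(1+z)$ for $z>0$ (already used in the proof of Lemma \ref{incomplete gamma}), together with $e^{z}\le 1+e^{z}\le 2e^{z}$, to conclude that the integral is of the same order of magnitude as $(1+e^{z})\ln T/(1+z)$; this is precisely the claimed expression, since $1+T^{1-a}=1+e^{z}$ and $1+|1-a|\ln T=1+z$. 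In the case $a>1$ one has instead
\[
\int_1^T x^{-a}\,dx=\ln T\cdot\frac{1-e^{-z}}{z}=\ln T\cdot e^{-z}\cdot\frac{e^{z}-1}{z},
\]
and the same bound on $(e^{z}-1)/z$ shows this is of the same order of magnitude as $\ln T/(1+z)$; since here $1\le 1+T^{1-a}=1+e^{-z}\le 2$, this is again of the same order of magnitude as $(1+T^{1-a})\ln T/(1+|1-a|\ln T)$. Taking the worst constants over the finitely many cases finishes both the upper and the lower bound.

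There is no genuine obstacle here; the only thing that needs care is recognizing that the single estimate $(e^{z}-1)/z\approx e^{z}/(1+z)$ drives both regimes $a<1$ and $a>1$, and correctly tracking which of $T^{1-a}$ and $1$ dominates the factor $1+T^{1-a}$ in each regime (the large term when $a<1$, the constant term when $a>1$). It is also worth a quick sanity check of the limiting behaviour as $T\to 1$, as $a\to 1$, and as $a\to\pm\infty$; all of these are limiting instances of the two displayed computations and confirm that the constants do not degenerate.
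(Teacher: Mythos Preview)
Your proof is correct and is essentially the same as the paper's. The only cosmetic difference is that the paper sets the signed variable $s=(1-a)\ln T$ and proves the single two-sided bound $(e^{s}-1)/s\asymp (1+e^{s})/(1+|s|)$ for all $s\neq 0$ (checking the three regions $s\le -1$, $-1<s<1$, $s\ge 1$), whereas you set $z=|1-a|\ln T>0$ and split into $a<1$ and $a>1$; unwinding your case $a>1$ with $s=-z$ recovers exactly the paper's bound.
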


\begin{proof}
First assume $a\neq 1$ and $T\neq 1$ and write%
\[
\int_{1}^{T}x^{-a}dx=\frac{\exp \left( \left( 1-a\right) \ln T\right) -1}{%
\left( 1-a\right) \ln T}\ln T 
\]%
Then interpret $s^{-1}\left( \exp \left( s\right) -1\right) $ as the slope
of a secant line and bound it above and below by $C\left( 1+e^{s}\right)
/\left( 1+\left\vert s\right\vert \right) $ in the cases $s\leq -1$, $s\in
\left( -1,1\right) \backslash \left\{ 0\right\} $ and $1\leq s$. Then notice
that the estimate also holds when $a=1$ and/or $T=1$.
\end{proof}

Define $\xi _{1}:\left[ 0,1\right] \rightarrow \left[
0,1\right]$ by $\xi _{1}(t)=e^{t}\left( 1-t\right)$, from which it follows, see \cite{Fr2}, that
\begin{eqnarray*}
\xi _{1}^{-1}(t) &\leq &\min \left\{ \sqrt{2\left( 1-t\right) }%
,1-e^{-1}t\right\} :0\leq t\leq 1
\end{eqnarray*}
The following lemma is taken from \cite{Fr2}, which is based on basic estimates for the binomial distribution and the R\'enyi representation of order statistics from the exponential distribution. Recall that the order statistics of a vector $x\in \mathbb{R}^n$ are denoted $\left(x_{(i)}\right)_1^n$ (the non-decreasing rearrangement of its coordinates), and the non-increasing rearrangement of the absolute values of the coordinates of $x$ are denoted $\left(x_{[i]}\right)_1^n$. So if all coordinates of $x$ are non-negative, then $x_{[i]}=x_{(n-i+1)}$.

\begin{lemma}
\label{concentration of order statistics}Let $\left( \gamma _{i}\right)
_{1}^{n}$ be an i.i.d. sample from $\left( 0,1\right) $ with corresponding
order statistics $\left( \gamma _{(i)}\right) _{1}^{n}$ and let $t>0$. With
probability at least $1-3^{-1}\pi ^{2}\exp \left( -t^{2}/2\right) $, the
following event occurs: for all $1\leq i\leq n$,
\begin{eqnarray}
\gamma _{(i)}\leq1-\frac{n-i+1}{n+1}\left( 1-\xi _{1}^{-1}\left( \exp \left( \frac{%
-t^{2}-4\ln \left( n-i+1\right) }{2(n-i+1)}\right) \right) \right)\label{bottom order}
\end{eqnarray}%
and with probability at least $1-C\exp \left( -t^{2}/2\right) $ the
following event occurs: for all $1\leq i\leq n$,%
\begin{eqnarray}
\gamma _{(i)}\leq1-\frac{n-i}{n}\exp \left( -c\max \left\{ \frac{\left(
t+\sqrt{\ln i}\right) \sqrt{i}}{\sqrt{n\left( n-i+1\right) }},\frac{%
t^{2}+\ln i}{n-i+1}\right\} \right)  \label{Renyi est}
\end{eqnarray}%
\end{lemma}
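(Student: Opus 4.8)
Both displays are large-deviation estimates for the order statistics of i.i.d.\ uniform variables, and I would prove each of them first for a single index $i$ and then combine over $i$ by a union bound; the point is that the failure probability at index $i$ will decay polynomially in $i$ (or in $n-i+1$), so that the sum over $i$ is an absolute constant. For (\ref{bottom order}) I would argue directly with the binomial distribution, and for (\ref{Renyi est}) via the R\'enyi representation of exponential order statistics, as indicated before the statement.

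For (\ref{bottom order}), put $m=n-i+1$ and, for $a\in(0,1)$, use the identity $\mathbb{P}\{\gamma_{(i)}>a\}=\mathbb{P}\{N_a\geq m\}$, where $N_a=\#\{j:\gamma_j>a\}$ is binomial with parameters $n$ and $1-a$. Set $\rho=\exp\left(\frac{-t^2-4\ln m}{2m}\right)\in(0,1]$, let $w=\xi_1^{-1}(\rho)$, and choose $a$ by $1-a=\frac{m}{n+1}(1-w)$, which is exactly the quantity appearing in (\ref{bottom order}). Then $\mathbb{E}N_a=\frac{nm}{n+1}(1-w)<m$, so the Chernoff bound $\mathbb{P}\{N_a\geq m\}\leq\exp(-m(s-1-\ln s))$ applies with $s=\frac{n}{n+1}(1-w)<1$. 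The algebraic heart of the matter is the identity $s-1-\ln s=-\ln\xi_1(1-s)$, together with the observation that $\frac{n}{n+1}\leq1$ forces $1-s\geq w$, whence $\xi_1(1-s)\leq\xi_1(w)=\rho$ by monotonicity of $\xi_1$ on $[0,1]$. Therefore $m(s-1-\ln s)\geq-m\ln\rho=\frac{t^2}{2}+2\ln m$, so $\mathbb{P}\{\gamma_{(i)}>a\}\leq m^{-2}e^{-t^2/2}$, and summing over $i=1,\dots,n$ (equivalently over $m=1,\dots,n$) bounds the total failure probability by $\left(\sum_{m\geq1}m^{-2}\right)e^{-t^2/2}=\frac{\pi^2}{6}e^{-t^2/2}\leq\frac{\pi^2}{3}e^{-t^2/2}$.

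For (\ref{Renyi est}) I would use that, jointly in $i$, the variable $1-\gamma_{(i)}$ has the same distribution as $\exp(-S_i)$ with $S_i=\sum_{j=1}^{i}\eta_j/(n-j+1)$, where $\eta_1,\dots,\eta_n$ are i.i.d.\ standard exponential; this is the R\'enyi representation applied to $-\ln(1-\gamma_j)$. A lower bound for $1-\gamma_{(i)}$ is thus an upper deviation of $S_i$ above its mean $W_1=\sum_{j\leq i}(n-j+1)^{-1}=\sum_{l=n-i+1}^{n}l^{-1}$, and the mean contributes the right factor since $e^{-W_1}=\prod_{l=n-i+1}^{n}e^{-1/l}\geq\prod_{l=n-i+1}^{n}\frac{l-1}{l}=\frac{n-i}{n}$. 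For the fluctuation I would invoke Bernstein's inequality for the weighted exponential sum $S_i$: its variance proxy is $W_2=\sum_{j\leq i}(n-j+1)^{-2}\asymp\frac{i}{n(n-i+1)}$ and its largest coefficient is $(n-i+1)^{-1}$, so $\mathbb{P}\{S_i-W_1>u\}\leq\exp\left(-c\min\left\{\frac{u^2}{W_2},\,u(n-i+1)\right\}\right)$. Choosing $u$ to be a sufficiently large absolute multiple of $\max\left\{\frac{(t+\sqrt{\ln i})\sqrt{i}}{\sqrt{n(n-i+1)}},\,\frac{t^2+\ln i}{n-i+1}\right\}$, the first entry of the maximum makes $u^2/W_2\geq c(t^2+\ln i)$ and the second makes $u(n-i+1)\geq c(t^2+\ln i)$, so the event at index $i$ fails with probability at most $e^{-c(t^2+\ln i)}=e^{-ct^2}i^{-c}$; taking the absolute constant in $u$ large enough that the exponent of $i$ exceeds $1$ and summing over $i$ yields total failure probability at most $Ce^{-t^2/2}$, which is (\ref{Renyi est}).

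The step I expect to be most delicate is getting the first display sharp: a crude multiplicative Chernoff estimate for $N_a$ is too weak when $i$ is close to $n$, so one must keep the exact rate $s-1-\ln s$ and recognise it as $-\ln\xi_1(1-s)$, which is precisely what matches the $\xi_1^{-1}$ in the statement. In the second display the analogous point is that the operative variance is $W_2\asymp i/(n(n-i+1))$, not the naive $(n-i+1)^{-1}$, and it is this smaller variance that produces the extra factor $\sqrt{i/n}$ in the sub-Gaussian entry of the maximum; once the R\'enyi representation is used this is automatic, and what remains is routine Bernstein bookkeeping and the choice of constants making $\sum_i i^{-c}$ converge. Both computations are carried out in full in \cite{Fr2}.
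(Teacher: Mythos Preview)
Your proposal is correct and matches the approach the paper indicates: the lemma is quoted from \cite{Fr2} without proof, and the paper only remarks that it is ``based on basic estimates for the binomial distribution and the R\'enyi representation of order statistics from the exponential distribution,'' which is precisely the route you take. The details you supply --- the Chernoff rate $s-1-\ln s=-\ln\xi_1(1-s)$ that produces the $\xi_1^{-1}$ in (\ref{bottom order}), and the Bernstein bound for $S_i$ with variance proxy $\asymp i/(n(n-i+1))$ giving the sub-Gaussian term in (\ref{Renyi est}) --- are exactly what is needed, and your union-bound bookkeeping recovers the stated constants.
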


In Lemma \ref{concentration of order statistics}, (\ref{bottom order}) is preferable for $i>n/2$ while (\ref{Renyi est}) is preferable for $i\leq n/2$.

\begin{lemma}
\label{normalorderstats}Let $n\geq 3$, $t\geq 0$, and let $X$ and $Y$ be independent random vectors in $\mathbb{R}^n$, each with the standard normal distribution. Let $T:\mathbb{R}^n\rightarrow\mathbb{R}^n$ be the function that arranges the coordinates of a vector in non-decreasing order. Then with probability at least $1-C\exp\left(-t^2\right)$, the following event occurs: for all $1\leq i \leq (n+1)/2$,
\begin{equation}
X_{\left[ i\right] }\leq C\left( \ln \frac{n}{i}+\frac{t^{2}}{i}\right)
^{1/2} \label{orderstat estimate}
\end{equation}
and
\begin{equation}
\left\vert TX-TY\right\vert_{\infty}\leq C\min\left\{\frac{t^2}{\sqrt{\ln n}},t\right\}\label{ellinftyest}
\end{equation}
and with probability at least $0.51$ the following event occurs: for all $1\leq i \leq (n+1)/2$,
\begin{equation}
c\sqrt{\ln \frac{n}{i}}\leq X_{[i]}\leq C\sqrt{\ln \frac{n}{i}}\label{simultaneous medians}
\end{equation}
\end{lemma}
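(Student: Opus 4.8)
The plan is to transfer everything to Lemma \ref{concentration of order statistics} through the quantile transform. Write $\Phi$ for the standard normal distribution function, $\phi=\Phi'$, and $\Psi(x)=2\Phi(x)-1=\mathbb{P}\{|Z|\le x\}$ for $x\ge0$ (with $Z$ standard normal); both $\Phi$ and $\Psi$ are increasing bijections. Then $(|X_j|)_1^n$ has the same distribution as $(\Psi^{-1}(\beta_j))_1^n$ for an i.i.d. uniform sample $(\beta_j)_1^n$ on $(0,1)$, so $(X_{[i]})_1^n$ has the distribution of $(\Psi^{-1}(\beta_{(n-i+1)}))_1^n$; and the signed coordinates give that $TX$ and $TY$ have the distributions of $(\Phi^{-1}(\gamma^X_{(i)}))_1^n$ and $(\Phi^{-1}(\gamma^Y_{(i)}))_1^n$ with $\gamma^X,\gamma^Y$ independent i.i.d. uniform samples. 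I will use the elementary bounds $\mathbb{P}\{|Z|>x\}\le e^{-x^2/2}$ for all $x\ge0$ and $\mathbb{P}\{|Z|>x\}\ge c\,x^{-1}e^{-x^2/2}$ for $x\ge1$, equivalently $\Phi^{-1}(1-u)\le\sqrt{2\ln(1/u)}$ for $u\in(0,1)$ and $\Phi^{-1}(1-u)\ge\sqrt{2\ln(1/u)-\ln\ln(1/u)-C}$ for $u\le u_0$, and the fact that for $[a,b]\subseteq[1/2,1)$, $\Phi^{-1}(b)-\Phi^{-1}(a)=\int_a^b\phi(\Phi^{-1}(s))^{-1}\,ds$ with the integrand nondecreasing.

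For (\ref{orderstat estimate}): apply (\ref{bottom order}) with $t$ replaced by $\sqrt2\,t$ and with index $n-i+1$ in place of $i$; since $n-(n-i+1)+1=i$, this yields, simultaneously over $1\le i\le(n+1)/2$ and with probability $1-Ce^{-t^2}$, the bound $1-\beta_{(n-i+1)}\ge\frac{i}{n+1}\bigl(1-\xi_1^{-1}(\exp(\frac{-2t^2-4\ln i}{2i}))\bigr)$. Using $\xi_1^{-1}(s)\le1-e^{-1}s$ and $i^{-2/i}\ge e^{-2/e}$ this is $\ge c\,i\,e^{-t^2/i}/n$, and feeding it into $X_{[i]}=\Psi^{-1}(\beta_{(n-i+1)})\le\sqrt{2\ln(1/(1-\beta_{(n-i+1)}))}$ gives $X_{[i]}\le(2\ln\frac{C'n}{i}+\frac{2t^2}{i})^{1/2}$; since $\ln(n/i)\ge\ln(3/2)$ for these $i$ when $n\ge3$, the constant $C'$ is absorbed and (\ref{orderstat estimate}) follows. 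The upper bound in (\ref{simultaneous medians}) is (\ref{orderstat estimate}) at a universal $t=t_0$ (the term $t_0^2/i$ again absorbed using $\ln(n/i)\ge\ln(3/2)$), and its lower bound comes from (\ref{Renyi est}) applied to the sample $(1-\beta_j)_1^n$ with index $i\le(n+1)/2$ (the regime where (\ref{Renyi est}) is sharp): this gives $\beta_{(n-i+1)}=1-(1-\beta)_{(i)}\ge\frac{n-i}{n}\exp(-c\max\{\cdots\})=:L_i$, and one checks, separating small $i$ from $i$ of order $n$, that $1-L_i\le\mathbb{P}\{|Z|>c\sqrt{\ln(n/i)}\}$ (the polynomial $n/i$ dominating the factor $\sqrt{\ln(n/i)}$ from the lower tail bound), whence $X_{[i]}\ge c\sqrt{\ln(n/i)}$; taking $t_0$ large enough that the total failure probability is at most $0.49$ gives the claimed $0.51$.

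For (\ref{ellinftyest}): if $t\ge\sqrt{\ln n}$, then $|TX-TY|_\infty\le|X|_\infty+|Y|_\infty$, and a union bound over coordinates gives $|X|_\infty\le C(\sqrt{\ln n}+t)\le2Ct$ with probability $1-Ce^{-t^2}$, so $|TX-TY|_\infty\le Ct=C\min\{t^2/\sqrt{\ln n},t\}$. If $t\le\sqrt{\ln n}$, I return to the quantile picture: applying (\ref{bottom order}) and (\ref{Renyi est}) (the latter to $1-\gamma$), with $t$ replaced by $Ct$ and to both $\gamma^X$ and $\gamma^Y$, gives, with probability $1-Ce^{-t^2}$, deterministic $0<a_i\le b_i<1$ with $\gamma^X_{(i)},\gamma^Y_{(i)}\in[a_i,b_i]$ for all $i$, so that $|TX-TY|_\infty\le\max_i(\Phi^{-1}(b_i)-\Phi^{-1}(a_i))$. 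By the symmetry $x\mapsto-x$ it suffices to treat $i\ge(n+1)/2$; writing $m=n-i+1\le(n+1)/2$ one has $1-b_i\asymp\frac{m}{n}e^{-t^2/m}$ and $1-a_i\asymp\frac{m+(t+\sqrt{\ln n})\sqrt m+t^2}{n}$. When $m$ is of order $n$ (the order statistics near the centre), $\Phi^{-1}$ has bounded derivative on $[a_i,b_i]$ and $b_i-a_i\le C(t+\sqrt{\ln n})\sqrt m/n$, which is $\le Ct^2/\sqrt{\ln n}$ since $t\ge1$ and $\ln n\le\sqrt n$; otherwise, with the upper quantile bound at $b_i$, the lower one at $a_i$, and the identity $\sqrt A-\sqrt B=(A-B)/(\sqrt A+\sqrt B)$, the estimate reduces to checking $2\ln\frac{m'}{m}+\frac{t^2}{m}+\ln\ln\frac{n}{m}+C\le\frac{Ct^2}{\sqrt{\ln n}}(\sqrt A+\sqrt B)$ with $m'$ the numerator above and $\sqrt A+\sqrt B\ge c\sqrt{\ln n}$.

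The main obstacle is precisely this last estimate in (\ref{ellinftyest}): controlling $\max_i(\Phi^{-1}(b_i)-\Phi^{-1}(a_i))$ uniformly in $i$, where the short confidence interval $[a_i,b_i]$ for the uniform order statistic $\gamma_{(i)}$ is stretched by the rapidly growing derivative of $\Phi^{-1}$ near $1$; one must verify that this stretching is exactly compensated, the supremum being attained up to constants at the extreme indices $i\in\{1,n\}$, where it reproduces the $t^2/\sqrt{\ln n}$-scale fluctuation of the maximum of $n$ independent Gaussians. Everything else is bookkeeping with Lemma \ref{concentration of order statistics} and the Gaussian tail estimates recalled above.
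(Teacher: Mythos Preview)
Your approach is essentially the same as the paper's: quantile transform plus Lemma~\ref{concentration of order statistics} for (\ref{orderstat estimate}) and (\ref{simultaneous medians}), and for the $t^2/\sqrt{\ln n}$ half of (\ref{ellinftyest}) the paper likewise integrates the derivative of $\Phi^{-1}$ over the confidence intervals $[a_i,b_i]$ coming from (\ref{bottom order}) and (\ref{Renyi est}), splitting into the same cases you indicate and carrying the bookkeeping through in full. The one genuine difference is the $Ct$ half of (\ref{ellinftyest}): you use the crude bound $|TX-TY|_\infty\le|X|_\infty+|Y|_\infty$ together with a union bound on the Gaussian maximum, whereas the paper fixes the deterministic vector $x^*_i=\Phi^{-1}(i/(n+1))$, observes that $x\mapsto|Tx-x^*|_\infty$ is $1$-Lipschitz (since $T$ is), and applies Gaussian concentration to get $|TX-x^*|_\infty\le\mathbb{M}|TX-x^*|_\infty+Ct$, the median being $O(1/\sqrt{\ln n})$ by the already-proved $t^2/\sqrt{\ln n}$ bound at a fixed $t$; both routes give the same $Ct$ and neither is clearly superior here.
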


\begin{proof}
We will not repeat `with probability...' as it is clear that there are probabilities associated to the events in question, and what those probabilities are. To prove estimates for a general distribution based on estimates for the uniform distribution, transform under the action of the inverse cumulative distribution, which is increasing and preserves the operation of arranging in non-increasing order. So we may write $X_{[i]} =\Phi ^{-1}\left( \frac{1+\gamma _{(n-i+1)}}{2}\right) $, where $\left( \gamma _{i}\right) _{1}^{n}$ is an i.i.d. sample
from the uniform distribution on $\left( 0,1\right)$, and we use the bound
\begin{equation}
c\sqrt{\ln \frac{1}{1-x}}\leq \Phi ^{-1}\left( \frac{1+x}{2}\right)\leq C\sqrt{\ln \frac{1}{1-x}}\label{inverse n bound}
\end{equation}
valid for all $x\in(1/3,1)$, say. (\ref{orderstat estimate}) now follows from (\ref{bottom order}) and (\ref{inverse n bound}), and includes the upper bound in (\ref{simultaneous medians}) as a special case. To get a lower bound on an order statistic, we apply Lemma \ref{concentration of order statistics} to the i.i.d. random variables $\left( 1-\gamma _{i}\right)_{1}^{n}$, which are also uniformly distributed in $(0,1)$ and whose vector of order statistics is $\left( 1-\gamma _{(n-i+1)}\right)_{1}^{n}$. An upper bound on $1-\gamma _{(n-i+1)}$ translates to a lower bound on $\gamma _{(n-i+1)}$. So, from (\ref{Renyi est}) and (\ref{inverse n bound}),
\[
X_{\left[ i\right] }\geq c\sqrt{\ln \left( 1-\frac{n-i}{n}\exp \left( -\frac{%
C\left(1+\sqrt{\ln i}\right)\sqrt{i}}{n}\right) \right) ^{-1}}\geq c\sqrt{\ln 
\frac{n}{i}} 
\]%
which is seen to hold when $n>n_{0}$ (using $e^{-z}\geq 1-z$), and when $n\leq n_{0}$ this is bounded below by $c_{2}\geq c\sqrt{%
\ln \left( n/i\right) }$.

We now consider (\ref{ellinftyest}). Its proof, which occupies the next two pages, may later be removed and placed in another paper. It follows from the bounds relating $\Phi $ and $\phi=\Phi'$ that
\begin{equation}
\frac{d}{dx}\Phi ^{-1}\left( x\right) =\frac{1}{\phi \left( \Phi ^{-1}\left(
x\right) \right) }\leq \frac{C}{\min \left\{ x,1-x\right\} \sqrt{\ln \min
\left\{ x,1-x\right\} ^{-1}}}\label{derivative}
\end{equation}
A difference between our current calculations for (\ref{ellinftyest}) and what has been done for (\ref{orderstat estimate}) and (\ref{simultaneous medians}) above, is that there are no absolute values involved in the definition of $X_{(i)}$, and we write $X_{(i)} =\Phi ^{-1}\left(\gamma _{(i)}\right) $, where $\left( \gamma _{i}\right) _{1}^{n}$ is an i.i.d. uniform sample. Here we are re-using notation, and this $\left( \gamma _{i}\right) _{1}^{n}$ is not the same as the previous $\left( \gamma _{i}\right) _{1}^{n}$, which is inconsequential since we are now doing a new calculation. So, by (\ref{derivative}) and (\ref{bottom order}), for all $n/2<i\leq n$, $\Phi ^{-1}\left( \gamma _{(i)}\right) -\Phi ^{-1}\left( \frac{i}{n+1}\right) $ is bounded above by
\begin{eqnarray*}
&&C\int_{i/(n+1)}^{1-\frac{n-i+1}{n+1}\left( 1-\xi _{1}^{-1}\exp \left( 
\frac{-t^{2}-4\ln (n-i+1)}{2\left( n-i+1\right) }\right) \right) }\left(
1-x\right) ^{-1}\left( \ln \left( 1-x\right) ^{-1}\right) ^{-1/2}dx \\
&=&C\int_{\ln \frac{n+1}{n-i+1}}^{\ln \frac{n+1}{n-i+1}-\ln \left( 1-\xi
_{1}^{-1}\exp \left( \frac{-t^{2}-4\ln (n-i+1)}{2\left( n-i+1\right) }%
\right) \right) }s^{-1/2}ds \\
&\leq &C\left( \ln \frac{n+1}{n-i+1}\right) ^{-1/2}\ln \left( 1-\xi _{1}^{-1}\exp \left( \frac{-t^{2}-4\ln
(n-i+1)}{2\left( n-i+1\right) }\right) \right) ^{-1}
\end{eqnarray*}%
The function $s\mapsto -\ln \left( 1-\xi _{1}^{-1}\exp \left( -s\right)
\right) $ behaves like $\sqrt{s}$ near $0$ and like $s+1$ when $s$ is
large. We then consider two cases, depending on whether%
\begin{eqnarray}
\frac{t^{2}+4\ln (n-i+1)}{2\left( n-i+1\right) }
\end{eqnarray}%
lies in $\left( 0,1\right) $ or $\left[ 1,\infty \right) $, and in either
case the estimate is bounded above by $Ct^{2}/\sqrt{\ln n}$. Here we have used the fact that for all $a,b\geq 1$,
\[
\ln \left(1+ab\right)\leq C\ln \left(1+a\right)\ln \left(1+b\right)
\]
which is true since $1+ab\leq(1+a)(1+b)$ and for positive numbers uniformly bounded away from $0$ a product dominates a sum up to a constant. Therefore
\[
\ln \left(1+\frac{n}{n-i+1}\right)\ln \left(1+n-i+1\right)\geq c\ln n
\]
which can be modified be deleting the leftmost $1+$ and changing the factor $\ln (1+n-i+1)$ to something larger such as $n-i+1$ or $(n-i+1)/\ln (n-i+1)$. A zero in the denominator doesn't hurt since we are in practice considering the reciprocals. And assuming as we may that $t\geq 1$, $1+t\leq Ct^2$. So this is where the upper bound $Ct^2/\sqrt{\ln n}$ comes from. Obviously this bound can be improved significantly for individual order statistics; we haven't bothered to write out such bounds since for our purposes we need a uniform estimate over all $i$. A similar
calculation with a lower bound for $\gamma _{(i)}$ follows from (\ref{Renyi est}) applied to $\left(
1-\gamma _{(i)}\right) _{1}^{n}$: $\Phi ^{-1}\left( \frac{i-1}{n}\right)-\Phi ^{-1}\left( \gamma _{(i)}\right)$ is bounded above by
\begin{eqnarray*}
&&C\int_
{\frac{i-1}{n}\exp\left(-c\max\left\{\frac{\left(t+\sqrt{\ln (n-i+1)}\right)\sqrt{n-i+1}}{n},\frac{t^2}{n}\right\}\right)}^{(i-1)/n}\left(
1-x\right) ^{-1}\left( \ln \left( 1-x\right) ^{-1}\right) ^{-1/2}dx \\
&=&C\int_{-\ln \left[1-\frac{i-1}{n}\exp\left(-c\max\left\{\frac{\left(t+\sqrt{\ln (n-i+1)}\right)\sqrt{n-i+1}}{n},\frac{t^2}{n}\right\}\right)\right]}^{\ln \frac{n}{n-i+1}}s^{-1/2}ds 
\end{eqnarray*}
Using $\int_a^bs^{-1/2}ds=2(b-a)/(\sqrt{b}+\sqrt{a})\leq C(b-a)/\sqrt{b}$ valid for $0<a<b$, applied to this last quantity,
\[
\sqrt{b}=\left(\ln \frac{n}{n-i+1}\right)^{1/2}
\]
and $b-a$ is equal to
\begin{eqnarray*}
&C&\ln \left[1+\frac{i-1}{n-i+1}\left(1-\exp\left(-c\max\left\{\frac{\left(t+\sqrt{\ln (n-i+1)}\right)\sqrt{n-i+1}}{n},\frac{t^2}{n}\right\}\right)\right)\right]\\
&\leq&C\frac{i-1}{n-i+1}\left(1-\exp\left(-c\max\left\{\frac{\left(t+\sqrt{\ln (n-i+1)}\right)\sqrt{n-i+1}}{n},\frac{t^2}{n}\right\}\right)\right)
\end{eqnarray*}
The expression $1-e^{-s}$ behaves like $s$ for $0\leq s\leq 1$ and like $1$ when $s>1$. We now consider two cases depending on whether
\[
c\max\left\{\frac{\left(t+\sqrt{\ln (n-i+1)}\right)\sqrt{n-i+1}}{n},\frac{t^2}{n}\right\}
\]
lies in $[0,1]$ or $(1,\infty)$. In the first case, for the entire expression $C(b-a)/\sqrt{b}$, we get the same bound $Ct^2/\sqrt{\ln n}$ as before, using similar simplifications. In the second case we get
\[
\frac{Cn}{(n-i+1)\sqrt{\ln \frac{n}{n-i+1}}}
\]
However for the expression defining this case to be $> 1$,
\[
t\geq c\min\left\{\sqrt{n},\frac{n}{\sqrt{\ln (n-i+1)}}\right\}
\]
and regardless of which term defines this minimum we end up with the same bound $Ct^2/\sqrt{\ln n}$. We must now handle the discrepancy between $i/(n+1)$ and $(i-1)/n$ in the computations above. From (\ref{derivative}),
\[
\left\vert\Phi ^{-1}\left( \frac{i}{n+1}\right)-\Phi ^{-1}\left( \frac{i-1}{n}\right)\right\vert\leq
\frac{C}{n}\left(1-\frac{i}{n+1}\right)^{-1}\left(\ln \left(1-\frac{i}{n+1}\right)^{-1}\right)^{-1/2}
\leq \frac{C}{\sqrt{\ln n}}
\]
All of this implies that
\[
\left\vert X_{(i)}-\Phi^{-1} \left(\frac{i}{n+1}\right) \right\vert \leq \frac{Ct^2}{\sqrt{\ln n}}
\]
Similar bounds in the case $1\leq i\leq n/2$ now follow by
symmetry, and they also hold for $Y$ since $Y$ has the same distribution as $X$, and a bound on $\left\vert TX-TY\right\vert_{\infty}$ follows by the triangle inequality. For large values of $t$ this can be improved as follows. Let $x^*\in \mathbb{R}^n$ be defined as
\[
x^*_i=\Phi^{-1} \left(\frac{i}{n+1}\right)
\]
Since $T$ acts as an isometry on each of the $n!$ overlapping regions of $\mathbb{R}^n$ determined by the order of the coordinates of a vector, $T$ is $1$-Lipschitz on $\mathbb{R}^n$. So $x\mapsto Tx \mapsto Tx-x^*\mapsto \left\vert Tx-x^* \right\vert_\infty$ is the composition of $1$-Lipschitz functions and by Gaussian concentration,
\[
\left\vert TX-x^* \right\vert_\infty\leq \mathbb{M}\left\vert TX-x^* \right\vert_\infty+Ct
\]
which also applies to $TY$, and the result follows again by the triangle inequality.
\end{proof}

\begin{lemma}
\label{Lo med calculation}Let $0\leq r<\infty$, $1\leq p<\infty$, $n\geq2$, and let $X$ be a random vector in $\mathbb{R%
}^{n}$ with the standard normal distribution. If $r\in \left[ 0,1\right]$ then
\[
\mathbb{M}\sum_{i=1}^{n}i^{-r}X_{\left[ i\right] }^{p}\leq \frac{C^{p}p^{p/2}n^{1-r}\left( \ln n\right) ^{1+p/2}}{\left[p+(1-r) \ln n\right] ^{1+p/2}}
\]
and if $r\in \left[ 1,\infty \right)$ then
\[
\mathbb{M}\sum_{i=1}^{n}i^{-r}X_{\left[ i\right] }^{p}\leq \frac{C^p\left( \ln n\right)^{1+p/2}}{1+(r-1) \ln n}+C^p\left(\ln n\right)^{p/2} 
\]
with the reverse inequalities holding with $C$ replaced by $c$.
\end{lemma}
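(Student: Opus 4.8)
The plan is to estimate $S:=\sum_{i=1}^{n}i^{-r}X_{[i]}^{p}$ on an event of probability exceeding $1/2$ and then transfer the bounds to the median, using Lemma \ref{normalorderstats} for the order statistics and Lemmas \ref{incomplete gamma} and \ref{Lo basic sum bound} to evaluate the resulting deterministic sums. Concretely, I would work on the event of probability at least $0.51$ furnished by Lemma \ref{normalorderstats} on which $c\sqrt{\ln(n/i)}\leq X_{[i]}\leq C\sqrt{\ln(n/i)}$ for all $1\leq i\leq (n+1)/2$; by monotonicity of $i\mapsto X_{[i]}$ this also forces $X_{[i]}\leq X_{[\lfloor (n+1)/2\rfloor]}\leq C\sqrt{\ln 2}$ for $(n+1)/2<i\leq n$. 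Raising to the $p$-th power and splitting $S$ at the index $(n+1)/2$, on this event
\[
c^{p}\sum_{1\leq i\leq (n+1)/2}i^{-r}\left(\ln\frac{n}{i}\right)^{p/2}\leq S\leq C^{p}\sum_{1\leq i\leq (n+1)/2}i^{-r}\left(\ln\frac{n}{i}\right)^{p/2}+\left(C\sqrt{\ln 2}\right)^{p}\sum_{(n+1)/2<i\leq n}i^{-r}.
\]
Since this event has probability greater than $1/2$, the left side lies below $\mathbb{M}S$ and the right side above it, and it remains to match each side with the stated expression (up to replacing the base constant, as sanctioned by the notation section).

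For the upper bound I would absorb the tail into the first sum: since $(\ln 2)^{p/2}\leq\left(\ln(2n/i)\right)^{p/2}$ for every $i\leq n$, the right-hand side above is at most $C^{p}\sum_{i=1}^{n}i^{-r}\left(\ln(2n/i)\right)^{p/2}\leq C^{p}\sum_{i=1}^{2n}i^{-r}\left(\ln(2n/i)\right)^{p/2}$, which is exactly the quantity estimated in Lemma \ref{Lo basic sum bound} with $n$ replaced by $2n$, $a=r$, $q=p/2$. Using $\ln(2n)\leq 2\ln n$ and $(2n)^{1-r}\leq 2n^{1-r}$ together with the elementary inequalities $(1+p/2)^{1+p/2}\leq C^{p}p^{p/2}$ and $(1-r)\ln n+p\leq 2\left((1-r)\ln n+1+p/2\right)$ (for $p\geq 1$, $r\leq 1$), the output of Lemma \ref{Lo basic sum bound} in the range $r\in[0,1]$ rewrites as $C^{p}$ times the claimed bound. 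In the range $r\in[1,\infty)$ one invokes the second bound of Lemma \ref{Lo basic sum bound} instead and simply discards the $1+p/2$ from its denominator, which only enlarges the bound and lands on the claimed form.

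For the lower bound I would discard the (nonnegative) tail and bound the remaining sum below by $\frac{1}{2}\int_{1}^{n/2}x^{-r}\left(\ln(n/x)\right)^{p/2}\,dx$, which the substitution $x=ne^{-u}$ turns into $\frac{1}{2}n^{1-r}\int_{\ln 2}^{\ln n}e^{-(1-r)u}u^{p/2}\,du$ (and into $\frac{1}{2}\int_{\ln 2}^{\ln n}u^{p/2}\,du$ when $r=1$). After rescaling by $w=(1-r)u$ this is an incomplete gamma integral evaluated by Lemma \ref{incomplete gamma}; restricting it to a short interval around the peak $w=p/2$ when $(1-r)\ln n\geq p/2$, and around the upper limit when $(1-r)\ln n<p/2$ — in which case $n^{1-r}\leq e^{p/2}$, so the awkward factor $n^{1-r}$ is itself controlled — one checks the result is at least $c^{p}$ times the claimed expression, the lower endpoint $u=\ln 2$ contributing only lower-order terms. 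The case $n=2$, where Lemma \ref{normalorderstats} does not apply, is handled directly: the claimed lower bound is then at most a universal constant raised to the $p$, while $\mathbb{M}S\geq\mathbb{M}(X_{[1]}^{p})=(\mathbb{M}X_{[1]})^{p}\geq(\mathbb{M}|X_{1}|)^{p}>0$, and $S\leq\sum_{i=1}^{n}|X_{i}|^{p}$ disposes of the upper bound for $n=2$ similarly.

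The step I expect to demand the most care is the lower bound when $n$ is not large relative to $p$: there the half-sum $\sum_{1\leq i\leq (n+1)/2}$ — which is all that Lemma \ref{normalorderstats} controls — need not be a constant multiple of the full sum $\sum_{1\leq i\leq n}$, so one cannot simply quote Lemma \ref{Lo basic sum bound} for $\sum_{1\leq i\leq n}$ and must instead argue through the integral estimate above, tracking the interplay of $n^{1-r}$, $p$ and $(1-r)\ln n$ across sub-regimes. The dual worry for the upper bound — that the order statistics $X_{[i]}$ with $i$ comparable to $n/2$ are only of size $O(1)$ yet number about $n$, and so jointly contribute a term of order $n^{1-r}$ — is painless, since that term is absorbed by the $\left(\ln(2n/i)\right)^{p/2}$ weights after the rearrangement above. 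Everything else is the routine constant-chasing already licensed in the notation section.
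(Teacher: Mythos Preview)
Your proposal is correct and follows the same approach as the paper: the paper's proof is the single sentence ``The result follows from Eq.~(\ref{simultaneous medians}) of Lemma~\ref{normalorderstats}, together with Lemma~\ref{Lo basic sum bound},'' and you have supplied exactly the details behind that sentence (the handling of indices $i>(n+1)/2$, the passage from the probability-$0.51$ event to the median, and the boundary case $n=2$, which the paper's $n\geq 3$ hypothesis in Lemma~\ref{normalorderstats} formally requires).

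One minor simplification you may wish to adopt for the lower bound: your concern that the half-sum $\sum_{i\leq (n+1)/2}i^{-r}(\ln(n/i))^{p/2}$ might not be a constant multiple of the full sum is unfounded, since the block $n/4\leq i\leq n/2$ already contributes at least $c(\ln 2)^{p/2}n^{1-r}$ (respectively $c(\ln 2)^{p/2}$ when $r\geq 1$), which dominates the tail $\sum_{i>n/2}i^{-r}(\ln(n/i))^{p/2}\leq (\ln 2)^{p/2}\sum_{i>n/2}i^{-r}$ up to a universal constant independent of $p$. Hence one can quote the lower bound of Lemma~\ref{Lo basic sum bound} directly, avoiding the separate incomplete-gamma computation you sketch. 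Either route is valid.
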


\begin{proof}
The result follows from Eq. (\ref{simultaneous medians}) of Lemma \ref{normalorderstats}, together with Lemma \ref{Lo basic
sum bound}.
\end{proof}

\begin{lemma}
\label{Lo Lip con}Let $0\leq r<\infty$ and $1\leq p<\infty$. Then%
\[
\sup \left\{ \left( \sum_{i=1}^{n}i^{-r}\theta _{\left[ i\right]
}^{p}\right) ^{1/p}:\theta \in S^{n-1}\right\} =\left\{ 
\begin{array}{ccc}
\left( \sum_{1}^{n}i^{-2r/(2-p)}\right) ^{(2-p)/2p} & : & p\in \left[
1,2\right) \\ 
1 & : & p\in \left[ 2,\infty \right)%
\end{array}%
\right. 
\]%
For $p\in \left[ 1,2\right) $ this can be bounded above by%
\[
1+C\left( \frac{\ln n}{1+\left\vert2-2r-p\right\vert
\ln n}\right) ^{\frac{2-p}{2p}}\left( 1+n^{\frac{2-2r-p}{2p}}\right) 
\]
and below by the same quantity with $C$ replaced with $c$. For $r<r_0$ (for any universal constant $r_0>1$) the leftmost `1+' can be deleted.
\end{lemma}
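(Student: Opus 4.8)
The plan is to first compute the supremum exactly, then derive the stated estimates for the case $p\in[1,2)$. For $p\in[2,\infty)$, the map $\theta\mapsto\left(\sum i^{-r}\theta_{[i]}^p\right)^{1/p}$ is dominated by $\left(\sum\theta_{[i]}^p\right)^{1/p}=|\theta|_p\le|\theta|_2=1$ since $\omega_i\le 1$, with equality at $\theta=e_1$; this settles that line immediately. For $p\in[1,2)$, I would use Lagrange multipliers (or H\"older duality). Fixing the ordering of coordinates, one maximizes $\sum_{i=1}^n i^{-r}\theta_i^p$ subject to $\sum\theta_i^2=1$ with $\theta_1\ge\theta_2\ge\cdots\ge 0$. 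Writing $\theta_i^2=u_i$, this is maximizing the concave function $\sum i^{-r}u_i^{p/2}$ (concave since $p/2<1$) over the simplex $\sum u_i=1$, $u_i\ge 0$; the maximum is at an interior critical point where $i^{-r}u_i^{p/2-1}$ is constant, giving $u_i\propto i^{-2r/(2-p)}$, hence $\theta_i\propto i^{-r/(2-p)}$. Substituting and normalizing yields $\left(\sum_1^n i^{-2r/(2-p)}\right)^{(2-p)/2p}$; one should check the ordering constraint $\theta_1\ge\cdots\ge0$ is automatically satisfied (it is, since $r\ge0$) so the constrained optimum coincides with the unconstrained simplex optimum.

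Next, to get the two-sided estimate, set $a=2r/(2-p)$ and $q=0$ and apply Lemma \ref{Lo basic sum bound} to $\sum_{i=1}^n i^{-a}$. When $a\in[0,1]$ (equivalently $2r\le 2-p$, i.e. $2-2r-p\ge 0$) the sum is of order $n^{1-a}\ln n/(1+(1-a)\ln n)$; raising to the power $(2-p)/2p$ and noting $1-a=(2-2r-p)/(2-p)$ gives the claimed form, with the factor $1+n^{(2-2r-p)/2p}$ there to cover the regime where $n^{1-a}$ is large. When $a\in[1,\infty)$ (i.e. $2-2r-p\le 0$) Lemma \ref{Lo basic sum bound} gives the sum is of order $\ln n/(1+(a-1)\ln n)+1$, i.e. of order $1$ when $a>1$ is bounded away from $1$, and of order $\ln n$ when $a=1$; again raising to the power $(2-p)/2p$ and writing $a-1=(2r+p-2)/(2-p)$ produces the stated bound, now with the "$1+$" term out front capturing the contribution of the first coordinate $\theta_1\approx 1$. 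The interpolation between the $a=1$ and $a\ne1$ behaviors is exactly what Lemma \ref{Lo basic power int} provides if one prefers to estimate $\sum i^{-a}$ via the integral $\int_1^n x^{-a}dx$ plus the first term.

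The last sentence — that for $r<r_0$ the leading "$1+$" can be dropped — follows because in that range $a=2r/(2-p)<2r_0/(2-p)$, and one checks the sum $\sum i^{-a}$ raised to the power $(2-p)/2p$ already absorbs the constant $1$: indeed $\left(\sum_1^n i^{-a}\right)^{(2-p)/2p}\ge 1$ always (the $i=1$ term alone contributes $1$), and when $r$ is bounded the exponent and the constant $c$ in Lemma \ref{Lo basic sum bound} can be chosen so that the displayed expression without the "$1+$" is already $\ge c>0$ times the true value; alternatively, one observes $1\le C(\text{main term})$ in this regime since the main term is bounded below by a positive constant depending only on $r_0,p$.

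The main obstacle I anticipate is not the optimization itself but the bookkeeping in the second paragraph: one must carefully track which of $n^{1-a}$, $\ln n$, or the constant dominates $\sum i^{-a}$ across the three sub-regimes ($a<1$, $a=1$, $a>1$), verify that the single expression
\[
1+C\left(\frac{\ln n}{1+|2-2r-p|\ln n}\right)^{\frac{2-p}{2p}}\left(1+n^{\frac{2-2r-p}{2p}}\right)
\]
correctly interpolates all of them with the same constant $C$, and confirm the matching lower bound with $c$ in place of $C$. The boundary case $a=1$ (i.e. $p=2-2r$), where a genuine logarithmic factor appears, is the delicate point and is precisely where Lemma \ref{Lo basic power int} earns its keep.
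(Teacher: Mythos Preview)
Your proposal is correct and follows essentially the same route as the paper: the paper computes the supremum for $p\in[1,2)$ via H\"older's inequality for the $\ell_{2/(2-p)}^n$--$\ell_{2/p}^n$ pairing (your Lagrange/concavity argument is an equivalent formulation, and you even mention H\"older as an alternative), identifies the same extremizer $\theta_i\propto i^{-r/(2-p)}$, and handles $p\ge 2$ identically. For the numerical bounds the paper goes directly through Lemma~\ref{Lo basic power int} on $1+\int_1^n x^{-2r/(2-p)}dx$ (rather than Lemma~\ref{Lo basic sum bound}), using the observation $c<(2-p)^{2-p}<C$ to absorb the $(2-p)$ factors when converting $|1-a|$ to $|2-2r-p|$; this is precisely the ``bookkeeping'' you anticipated, and your sketch already points to the same lemma for the interpolation.
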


\begin{proof}
For $p\in \left[ 1,2\right) $ an upper bound follows by H\"{o}lder's
inequality for $\ell _{2/(2-p)}^{n}-\ell _{2/p}^{n}$ duality, with equality
when $\theta _{i}=i^{-r/(2-p)}\left( \sum_{j=1}^{n}j^{-2r/(2-p)}\right)
^{-1/2}$. Now 
\[
\left( \sum_{1}^{n}i^{-2r/(2-p)}\right) ^{(2-p)/2p}\leq \left(
1+\int_{1}^{n}x^{-2r/(2-p)}dx\right) ^{(2-p)/2p} 
\]%
which is bounded using Lemma \ref{Lo basic power int} and noting that $%
0<(2-p)/(2p)\leq 1/2$ and that $c<(2-p)^{(2-p)}<C$. For $p\in \left[ 2,\infty \right) $, $\left(
\sum_{i=1}^{n}i^{-r}\theta _{\left[ i\right] }^{p}\right) ^{1/p}\leq \left(
\sum_{i=1}^{n}\theta _{\left[ i\right] }^{p}\right) ^{1/p}\leq 1$ with
equality when $\theta =e_{1}$.
\end{proof}

We shall use the fact that for all $b\in [1,n]$, not necessarily an integer,
\[
\sum_{i=1}^{n}i^{-r}x _{\left[ i\right]}^{p}\leq \frac{2n}{b}\sum_{i=1}^{b}i^{-r}x _{\left[ i\right]}^{p}
\]
where the sum on the right is over all $i\in \mathbb{N}$ such that $1\leq i \leq b$.

\begin{lemma}
\label{diff restrict}Let $\psi:\mathbb{R}^n\mapsto\mathbb{R}$ and let $A\subset\mathbb{R}^n$ be convex set with nonempty interior. Let $E$ denote the collection of all $x\in\mathbb{R}^n$ such that the coordinates of $x$ are distinct and non-zero. Assume that $\psi$ is continuous on $A$ and differentiable on $A \cap E$. Then, as an element of $[0,\infty]$,
\[
\mathrm{Lip}\left(\psi|_A\right)=\sup_{x\in A\cap E}\left\vert\nabla\psi (x)\right\vert
\]
\end{lemma}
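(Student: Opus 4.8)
The plan is to prove the two inequalities separately. The inequality $\mathrm{Lip}(\psi|_A)\geq\sup_{x\in A\cap E}|\nabla\psi(x)|$ is the easy direction: for $x\in A\cap E$ the gradient exists, and since $A$ has nonempty interior and $x$ can be approached from within $A$ along the direction $\pm\nabla\psi(x)/|\nabla\psi(x)|$ (at least for $x$ in the interior of $A$; boundary points of $A\cap E$ are handled by taking a limit of interior points, using continuity of $\psi$ on $A$ and lower semicontinuity considerations, or simply by noting $A\cap E$ is open in the relevant cases and the sup is unchanged), the directional derivative $\langle\nabla\psi(x),\nabla\psi(x)/|\nabla\psi(x)|\rangle=|\nabla\psi(x)|$ is a limit of difference quotients $(\psi(x+tv)-\psi(x))/t$ with $x+tv\in A$, each bounded by $\mathrm{Lip}(\psi|_A)$.

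For the reverse inequality, set $L=\sup_{x\in A\cap E}|\nabla\psi(x)|$; we must show $|\psi(y)-\psi(z)|\leq L|y-z|$ for all $y,z\in A$. The key point is that the complement $\mathbb{R}^n\setminus E$ — the set where some coordinate vanishes or two coordinates coincide — is a finite union of hyperplanes, hence has Lebesgue measure zero and, more importantly, does not locally disconnect $\mathbb{R}^n$ in dimension $n\geq 2$ (and the case $n=1$ is trivial, $E$ being cofinite). First I would reduce to the case where $y,z$ lie in the interior of $A$; the general case then follows by continuity of $\psi$ on $A$ and the fact that interior points are dense in the convex set $A$. Given $y,z$ in the interior, the segment $[y,z]$ lies in the interior of $A$ and meets $\bigcup(\text{finitely many hyperplanes})$ in at most finitely many points. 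On each open sub-segment between consecutive such points, $\psi$ is differentiable with gradient bounded by $L$, so the fundamental theorem of calculus (applied to $t\mapsto\psi((1-t)y+tz)$, which is $C^1$ there) gives the Lipschitz bound on that piece; summing over the finitely many pieces and using continuity of $\psi$ at the crossing points yields $|\psi(y)-\psi(z)|\leq L|y-z|$.

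The one genuine subtlety — and the step I expect to be the main obstacle — is the case where the open segment $(y,z)$ is entirely contained in one of the bad hyperplanes, so the "cross transversally in finitely many points" picture fails. Here one perturbs: choose $y',z'$ near $y,z$ in the interior of $A$ with the segment $[y',z']$ transversal to all the bad hyperplanes (possible since $n\geq 2$ and the bad set is a finite union of hyperplanes, so a generic nearby segment is transversal), apply the previous paragraph to get $|\psi(y')-\psi(z')|\leq L|y'-z'|$, and let $y'\to y$, $z'\to z$ using continuity of $\psi$ on $A$. A clean way to organize this is: fix a vector $w$ not parallel to any of the finitely many hyperplane normals, and for small $\delta>0$ apply the estimate along $[y+\delta w,z+\delta w]$, which crosses each bad hyperplane in at most one point, then send $\delta\to 0$. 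This also subsumes the transversal case, so the whole reverse inequality can be run uniformly through this shift-and-limit device.
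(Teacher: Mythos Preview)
Your proposal is correct and follows essentially the same approach as the paper: reduce to interior points via density of $\mathrm{int}(A)$ in $A$ and continuity of $\psi$, then use that a suitably perturbed segment meets the finite union of hyperplanes $\mathbb{R}^n\setminus E$ in at most $n+n(n-1)/2$ points, and integrate the gradient along the segment. The paper organizes the perturbation by successively moving the endpoints $x_1\to x_2\to x_3\to x_4$ (and similarly for $y$) so that eventually all $2n$ coordinates of the pair are distinct, whereas you shift the entire segment by $\delta w$; these are cosmetic variants of the same transversality argument.
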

\begin{proof}
(Sketch) We focus on proving that $LHS\leq RHS$ when $RHS<\infty$; the rest comes down to approximating a supremum. Note that $E$ is dense in $\mathbb{R}^n$, and by considering sets of the form $\mathrm{conv}\{z,B\}$ where $z\in A$ and $B$ is any ball of non-zero radius contained in $A$, we see that $\mathrm{int}\left(A\right)$ is dense in $A$. So, suppose $\left\vert\nabla\psi (z)\right\vert\leq L$ for all $z\in A\cap E$ and some $L<\infty$, and consider any $x_1,y_1\in A$ with $x_1\neq y_1$, and any $\varepsilon>0$. We consider sequences $x_2,x_3,x_4$ and $y_2,y_3,y_4$ such that each $x_{i+1}$ is sufficiently close to its predecessor $x_i$ and by continuity each $\psi\left(x_{i+1}\right)$ is sufficiently close to $\psi\left(x_i\right)$. Similarly so with the $y$'s. This can be done so that $x_2,y_2\in \mathrm{int}(A)$, $x_3,y_3\in \mathrm{int}(A)$ and that all coordinates of $x_3$ and $y_3$ are distinct ($2n$ distinct coordinates in total), and that $x_4,y_4\in A\cap E$ such that $x_4$ and $y_4$ also have completely distinct coordinates between the two of them. We bound $\left\vert \psi(x_1)-\psi(y_1)\right\vert$ up to a term involving $\varepsilon$ by integrating over the line segment joining $x_4$ and $y_4$, which contains at most $n+n(n-1)/2$ points not in $E$. Details are left to the reader.
\end{proof}

\section{\label{linords}Order statistics, norms and quasi-norms}

As part of the proof of Theorem \ref{LoMain}, towards estimating the distribution of a gradient in order to derive concentration inequalities, we need estimates for the distribution of certain functionals of order statistics. However, in order to avoid the blowup of a quasi-norm constant as $p\rightarrow 1$ we need more. We need:
\begin{itemize}
\item A deterministic bound of the form
\[
\sum_{i=1}^{n}i^{-2r}x_{[i]}^{2(p-1)} \leq \varphi \left( \left \Vert x \right \Vert \right) 
\]
where $\left \Vert \cdot \right \Vert$ is a norm, $\varphi:\left[0,\infty \right) \rightarrow \left[0,\infty \right)$, and the inequality is valid for all $x\in \mathbb{R}^n$,
\item a bound on the distribution of $\varphi \left( \left \Vert X \right \Vert \right)$, where $X$ is a random vector in $\mathbb{R}^n$ with the standard normal distribution.
\end{itemize}
Since these estimates will affect the bounds we end up with in Theorem \ref{LoMain}, the problem is not purely existential; we want good estimates. Of greatest interest is the case $p\in \left(1,3/2\right)$ where the quantity $\sum_{i=1}^{n}i^{-2r}x_{[i]}^{2(p-1)}$ is not already a function of a norm of $x$. Geometrically, the problem is related to finding a convex subset of a given non-convex set whose complement has comparable Gaussian measure to the complement of the given non-convex set.

\bigskip

\begin{tikzpicture}
\draw[white] (0,0) circle(0pt);
\draw (2.85,0) .. controls (5.85,1) .. (6.85,4);
\draw (6.85,4) .. controls (7.85,1) .. (10.85,0);
\draw (10.85,0) .. controls (7.85,-1) .. (6.85,-4);
\draw (6.85,-4) .. controls (5.85,-1) .. (2.85,0);
\draw (4.35,0) -- (6.85,2.5);
\draw (6.85,2.5) -- (9.35,0);
\draw (9.35,0) -- (6.85,-2.5);
\draw (6.85,-2.5) -- (4.35,0);
\draw[black] (6,-5) circle(0pt) node[anchor=west]{Figure 2};
\end{tikzpicture}

\begin{theorem}\label{orderorderbound}
Let $X$ be a random vector in $\mathbb{R}^n$, $n\geq3$, with the standard normal distribution, $0\leq r<\infty$, $1\leq p<\infty$ and $t>0$. In each of the following four cases, definitions are given for $R$, $S$ are $\left\vert \cdot \right \vert_{\sharp}$, and in each case,
\[
\mathbb{P}\left\{\left\vert X \right \vert_{\sharp}\leq S\right\}\geq 1-C\exp\left(-t^2/2\right) \hspace{1cm}\textit{and} \hspace{1cm} \left\vert X \right \vert_{\sharp}\leq S \Rightarrow \sum_{i=1}^{n}i^{-2r}X_{[i]}^{2(p-1)}\leq R
\]

\textbf{Case I:} If $p\in \left[3/2,\infty \right)$ then for all $x\in \mathbb{R}^n$ set
\[
 \left\vert x \right \vert_{\sharp}=\left(\sum_{i=1}^{n}i^{-2r}x_{[i]}^{2(p-1)}\right)^{\frac{1}{2(p-1)}}
 \]
 and $R=S^{2(p-1)}=A+Bt^{2(p-1)}$ where
 \[
 A=\frac{C^p p^p n^{1-2r} \left( \ln n \right)^p}{\left[ p+(1-2r) \ln n\right]^p}1_{\{0\leq r\leq 1/2\}}+
 \left(\frac{C^p\left( \ln n \right)^p}{1+(2r-1) \ln n }+C^p\left(\ln n \right)^{p-1}\right)1_{\{1/2< r<\infty\}}
 \]
 and
 \[
 B=C\left[1+\left( \frac{\ln n}{1+\left\vert 2-2r-p \right\vert \ln n} \right)^{2-p}\left( 1+n^{2-2r-p} \right)\right]1_{\{3/2\leq p< 2\}}+C^p1_{\{2\leq p< \infty\}}
 \]
In this case $\left\vert \cdot \right \vert_{\sharp}$ is a norm.

\textbf{Case II:} If $p\in \left[1,3/2 \right)$ then for all $x\in \mathbb{R}^n$,
\[
\sum_{i=1}^{n}i^{-2r}x_{[i]}^{2(p-1)} \leq C\left( \sum_{i=1}^{n/e}i^{-2r}\left( \ln \frac{n}{i}+\frac{t^{2}
}{i}\right) ^{p-1}\right) ^{3-2p}\left\vert x \right \vert_{\sharp}^{2\left(
p-1\right) }
\]
where
\begin{equation}
\left\vert x \right \vert_{\sharp}=\sum_{i=1}^{n/e}\frac{i^{-2r}x_{\left[ i\right] }}{\left( \ln
\frac{n}{i} +\frac{t^{2}}{i}\right) ^{\frac{3-2p}{2}}} \label{nomnom}
\end{equation}
and
\begin{eqnarray*}
R&=&S=C\sum_{i=1}^{n/e}i^{-2r}\left( \ln \frac{n}{i}+\frac{t^{2}
}{i}\right) ^{p-1}\\
&\leq&\frac{Cn^{1-2r}\left(\ln n\right)^p}{\left[1+(1-2r)\ln n \right]^p}1_{\{0\leq r \leq 1/2\}}+
C\left(\frac{\left(\ln n \right)^p}{1+(2r-1)\ln n}+\left(\ln n \right)^{p-1} \right)1_{\{1/2< r<\infty\}}\\
&+&C\left(1+\frac{1+n^{2-2r-p}}{1+\left\vert 2-2r-p\right\vert\ln n}\ln n \right)t^{2(p-1)}
\end{eqnarray*}
Under the added assumption that $p \geq 3/2-2r$, and because the sums have been restricted to $1\leq i \leq n/e$, $\left\vert \cdot \right \vert_{\sharp}$ is a norm.

\textbf{Case III:} If $p<3/2-2r$ (so necessarily $0\leq r <1/4$ and $1\leq p <3/2$), then for all $x\in \mathbb{R}^n$,
\[
\sum_{i=1}^{n}i^{-2r}x_{[i]}^{2(p-1)} \leq Cn^{(1-2r)(3-2p)}\left\vert x \right \vert_{\sharp}^{2(p-1)}
\]
where
\[
\left\vert x \right \vert_{\sharp}=\sum_{i=1}^{n}i^{-2r}x_{[i]}
\]
and
\begin{eqnarray*}
S&=&Cn^{1-2r}+Cn^{\frac{1-4r}{2}}\left( \frac{\ln n}{1+(1-4r) \ln n} \right)^{\frac{1}{2}}t \\
R&=&Cn^{(1-2r)(3-2p)}S^{2(p-1)}\leq Cn^{1-2r}+Cn^{2-2r-p}\left(\frac{\ln n}{1+(1-4r)\ln n}\right)^{p-1}t^{2(p-1)}
\end{eqnarray*}

\textbf{Case IV:} When $r\in \left(1/4,1/2\right]$ and $p=2(1-r)$, in which case $p\in \left[1,3/2 \right)$, the result in Case II can be improved as follows:

\underline{Case IVa:} If $(1-2r)\ln n \geq e$ (which excludes the case $p=1$), then for all $x\in \mathbb{R}^n$,
\[
\sum_{i=1}^{n}i^{-2r}x_{[i]}^{2(p-1)} \leq C\left(\ln n\right)^{3-2p} \left\vert x \right \vert_{\sharp}^{2(p-1)}
\]
where
\[
\left\vert x \right \vert_{\sharp}=\sum_{i=1}^{n/e} \beta_i^{\frac{-(3-2p)}{2(p-1)}}i^{\frac{-r}{p-1}}x_{[i]},
\]
and
\[
\beta_i=\frac{(1-2r)^p \ln n}{n^{1-2r}}i^{-2r}\left( \ln \frac {n}{i}\right)^{p-1}+i^{-1}
\]
The coefficient $\beta_i^{\frac{-(3-2p)}{2(p-1)}}i^{\frac{-r}{p-1}}$ is non-increasing in $i$ for $1\leq i \leq n/e$ (so that $\left\vert \cdot \right \vert_{\sharp}$ is a norm). In this case
\[
S=C^{\frac{1}{p-1}}(1-2r)^{\frac{-p}{2(p-1)}}\left( \ln n \right)^{\frac{-(3-2p)}{2(p-1)}}n^{\frac{1}{2}}+C^{\frac{1}{p-1}}\left( \ln n \right)^{\frac{1}{2}}t
\]
and
\[
R=C\left(\ln n\right)^{3-2p} S^{2(p-1)}\leq C(1-2r)^{-1}n^{1-2r}+C\left( \ln n \right)^{2-p}t^{2(p-1)}
\]

\underline{Case IVb:} If $(1-2r)\ln n < e$, then for all $x\in \mathbb{R}^n$,
\[
\sum_{i=1}^{n}i^{-2r}x_{[i]}^{2(p-1)} \leq C\left( \ln n\right) \left \vert x\right \vert_{\sharp}^{2(p-1)}
\]
where $\left \vert \cdot \right \vert_{\sharp}=\left\vert\cdot\right\vert$ is the standard Euclidean norm, $S=Cn^{\frac{1}{2}}+t$, and
\[
R=C\left( \ln n\right) S^{2(p-1)}\leq C\left( \ln n \right)t^{2(p-1)}
\]
\end{theorem}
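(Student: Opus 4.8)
The plan is to handle the four cases separately, proving in each the deterministic implication and the probability bound independently; the displayed closed forms for $S$ and $R$ then follow by feeding the relevant weighted sums into Lemmas \ref{Lo med calculation}, \ref{Lo basic sum bound} and \ref{Lo basic power int}, together with the subadditivity $(\ln\frac ni+\frac{t^{2}}i)^{s}\le(\ln\frac ni)^{s}+(\frac{t^{2}}i)^{s}$, valid for $s=p-1\in[0,1)$. The common mechanism is to manufacture a norm $|\cdot|_\sharp$ for which $\sum i^{-2r}x_{[i]}^{2(p-1)}$ is dominated by an explicit power of $|x|_\sharp$, and then to control $|X|_\sharp$ either by Gaussian concentration (Theorem \ref{clasgaus}) applied to that norm --- which is available whenever $|\cdot|_\sharp$ carries no $t$ --- or, when $|\cdot|_\sharp$ does involve $t$, by substituting the order-statistic estimate (\ref{orderstat estimate}) of Lemma \ref{normalorderstats} into the definition of $|\cdot|_\sharp$.

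For Cases I, III and IVb the deterministic part is essentially free. In Case I, $2(p-1)\ge1$, so $|x|_\sharp=(\sum i^{-2r}x_{[i]}^{2(p-1)})^{1/(2(p-1))}$ is one of the Lorentz norms fixed at the start of the paper (non-increasing weights $i^{-2r}\in[0,1]$ with first weight $1$, $\ell_q$-exponent $2(p-1)\ge1$), and the implication is the identity $R=S^{2(p-1)}$; then Theorem \ref{clasgaus} gives $|X|_\sharp\le\mathbb{M}|X|_\sharp+Ct\,b(|\cdot|_\sharp)$ off an event of probability $2e^{-t^{2}}$, and one estimates $b(|\cdot|_\sharp)$ by Lemma \ref{Lo Lip con} (this is $B$) and $\mathbb{M}|X|_\sharp^{2(p-1)}=\mathbb{M}\sum i^{-2r}X_{[i]}^{2(p-1)}$ by Lemma \ref{Lo med calculation} (this is $A$), both with $2r,2(p-1)$ in place of $r,p$, concluding via $(u+v)^{2(p-1)}\le C^{2(p-1)}(u^{2(p-1)}+v^{2(p-1)})$. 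Case III uses $|x|_\sharp=\sum i^{-2r}x_{[i]}$: the bound $\sum i^{-2r}x_{[i]}^{2(p-1)}\le(\sum i^{-2r})^{3-2p}(\sum i^{-2r}x_{[i]})^{2(p-1)}$ is Jensen for the probability measure $\propto i^{-2r}$ (legitimate as $2(p-1)<1$), $\sum_{i\le n}i^{-2r}\le Cn^{1-2r}$ because $2r<1/2$, and the probability bound is again Theorem \ref{clasgaus} with the $p=1$ instances of Lemmas \ref{Lo med calculation} and \ref{Lo Lip con}. Case IVb uses the Euclidean norm: Hölder with exponents $1/(p-1),1/(2-p)$ gives $\sum i^{-2r}x_{[i]}^{2(p-1)}\le(\sum x_{[i]}^{2})^{p-1}(\sum i^{-2r/(2-p)})^{2-p}$, and since $p=2-2r$ forces $2r/(2-p)=1$ one gets $\sum_{i\le n}i^{-2r/(2-p)}\le C\ln n$ and $(\ln n)^{2-p}\le\ln n$; the probability bound is $|X|\le\mathbb{M}|X|+Ct\le Cn^{1/2}+t$, and the stated form of $R$ uses $n^{p-1}\le C$, which holds because $(1-2r)\ln n<e$ forces $p-1<e/\ln n$ (the case $p=1$ being trivial).

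Case II is where the work is. For any positive weights $(v_i)$, Hölder with exponents $\tfrac1{2(p-1)}$ and $\tfrac1{3-2p}$ yields
\[
\sum_{i}i^{-2r}x_{[i]}^{2(p-1)}\le\Big(\sum_{i}i^{-2r}v_ix_{[i]}\Big)^{2(p-1)}\Big(\sum_{i}i^{-2r}v_i^{-\tfrac{2(p-1)}{3-2p}}\Big)^{3-2p}.
\]
I would first invoke the elementary inequality quoted just before Lemma \ref{diff restrict} with $b=n/e$ to replace $\sum_{i\le n}$ by $2e\sum_{i\le n/e}$ on the left, then apply the displayed inequality with $v_i=(\ln\frac ni+\frac{t^{2}}i)^{-(3-2p)/2}$, for which $v_i^{-2(p-1)/(3-2p)}=(\ln\frac ni+\frac{t^{2}}i)^{p-1}$; this produces the stated estimate with $|x|_\sharp=\sum_{i\le n/e}i^{-2r}(\ln\frac ni+\frac{t^{2}}i)^{-(3-2p)/2}x_{[i]}$. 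Two checks remain. That $|\cdot|_\sharp$ is a norm: a non-negative non-increasing coefficient sequence $(c_i)$ makes $x\mapsto\sum c_ix_{[i]}$ a norm (by the rearrangement inequality it equals $\max_\sigma\sum_i c_i|x_{\sigma(i)}|$, a maximum of weighted $\ell_1$ norms, definite because $c_1>0$); and $i^{-2r}(\ln\frac ni+\frac{t^{2}}i)^{-(3-2p)/2}$ is non-increasing on $\{i\le n/e\}$ by a one-line logarithmic-derivative computation using $\ln\frac ni\ge1$ there and the hypothesis $3-2p\le4r$, i.e. $p\ge3/2-2r$. For the probability bound, since $|\cdot|_\sharp$ involves $t$ I would avoid Gaussian concentration and instead note that on the event of (\ref{orderstat estimate}), valid for all $i\le(n+1)/2\supseteq\{i\le n/e\}$ with probability $\ge1-Ce^{-t^{2}}$, one has $X_{[i]}\le C(\ln\frac ni+\frac{t^{2}}i)^{1/2}$, so $|X|_\sharp\le C\sum_{i\le n/e}i^{-2r}(\ln\frac ni+\frac{t^{2}}i)^{p-1}=S$; and $R=S$ since $3-2p+2(p-1)=1$.

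Case IVa runs the same Hölder argument with the $t$-free weight encoded by $\beta_i$: choose $v_i$ so that $i^{-2r}v_i^{-2(p-1)/(3-2p)}=\beta_i$, equivalently $|x|_\sharp=\sum_{i\le n/e}\beta_i^{-(3-2p)/(2(p-1))}i^{-r/(p-1)}x_{[i]}$, so the inequality becomes $\sum i^{-2r}x_{[i]}^{2(p-1)}\le(\sum_{i\le n/e}\beta_i)^{3-2p}|x|_\sharp^{2(p-1)}$, and the design of $\beta_i$ is precisely what makes $\sum_{i\le n/e}\beta_i\le C\ln n$: the $i^{-1}$-part sums to $\le C\ln n$, while the other part sums to $\le C\ln n$ by Lemma \ref{Lo basic sum bound} once $(1-2r)\ln n\ge e$ is used to calibrate the prefactor $(1-2r)^{p}\ln n/n^{1-2r}$. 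The norm property is again a monotonicity check on the explicit coefficient, now using $p=2-2r$, $r\le1/2$ and $\ln\frac ni\ge1$. Since $\beta_i$ carries no $t$, the probability bound is Gaussian concentration: $b(|\cdot|_\sharp)=(\sum_{i\le n/e}\beta_i^{-(3-2p)/(p-1)}i^{-2r/(p-1)})^{1/2}\le C(\ln n)^{1/2}$ (from $\beta_i\ge i^{-1}$ and $p=2-2r$), which gives the $C(\ln n)^{1/2}t$ term of $S$, while $\mathbb{M}|X|_\sharp\le C\sum_{i\le n/e}\beta_i^{-(3-2p)/(2(p-1))}i^{-r/(p-1)}(\ln\frac ni)^{1/2}$ (bounding $\mathbb{M}$ by $2\mathbb{E}$ and $\mathbb{E}X_{[i]}$ by $C(\ln\frac ni)^{1/2}$ for $i\le n/e$) is evaluated using the other lower bound $\beta_i\ge\frac{(1-2r)^{p}\ln n}{n^{1-2r}}i^{-2r}(\ln\frac ni)^{p-1}$, with the exponents rigged via $p-1=1-2r$ so that the $i$-powers collapse to $i^{-2r}$ and Lemma \ref{Lo basic sum bound} applies, producing the $n^{1/2}(1-2r)^{-p/(2(p-1))}(\ln n)^{-(3-2p)/(2(p-1))}$ term. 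The main obstacle is exactly this balancing act in Cases II and IVa: the weights $(v_i)$ must simultaneously make the coefficient sequence non-increasing, keep $\sum i^{-2r}v_i^{-2(p-1)/(3-2p)}$ small, and make $|X|_\sharp$ concentrate at a small value --- and in Case IVa one must in addition arrange that the blow-up of $R$ as $r\to1/2$ is only $(1-2r)^{-1}$ rather than the $(1-2r)^{-p}$ that the Case II weight produces, which is what forces the two-term shape of $\beta_i$ and a careful rather than crude estimation of $b(|\cdot|_\sharp)$ and $\mathbb{M}|X|_\sharp$.
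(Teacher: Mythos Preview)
Your proposal is correct and follows the same overall architecture as the paper: Cases I, III and IVb match the paper's argument essentially line for line (Gaussian concentration on the explicit norm, with the paper's Lemmas \ref{Lo med calculation} and \ref{Lo Lip con} supplying $A$ and $B$), and Case II is the paper's H\"older-with-weights argument followed by the order-statistic bound (\ref{orderstat estimate}), with the monotonicity check for the norm property filled in exactly as the paper intends.

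The one place you diverge from the paper is Case IVa, and your route is genuinely simpler. The paper \emph{derives} the form of $\beta_i$ by a Lagrange-multiplier optimisation, then introduces a crossover point $A_0$ (solving $A^{-1}i^{2r}(\ln\frac ni)^{-(p-1)}=i$), splits both the median sum and the Lipschitz-constant sum at $A_0$, and bounds each piece using the tighter of the two available bounds on $\beta_i^{-1}$; this costs a page of estimates on $A_0$ and on incomplete-gamma-type integrals. You instead take $\beta_i$ as given and exploit its two summands separately: $\beta_i\ge i^{-1}$ alone gives $b(|\cdot|_\sharp)^2\le\sum_{i\le n/e} i^{(3-2p-2r)/(p-1)}=\sum i^{-1}\le C\ln n$ (using $p-1=1-2r$), while $\beta_i\ge A\,i^{-2r}(\ln\frac ni)^{p-1}$ alone collapses the median sum to $A^{-(3-2p)/(2(p-1))}\sum i^{-2r}(\ln\frac ni)^{p-1}$, which Lemma \ref{Lo basic sum bound} handles directly. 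Both estimates land on the stated $S$ (in fact your Lipschitz bound avoids the paper's extra $C^{1/(p-1)}$ factor). What the paper's longer argument buys is the explanation of \emph{why} $\beta_i$ has this particular two-term shape; what your argument buys is a cleaner verification once that shape is in hand.
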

\begin{proof}
\textbf{Case I:} $p\in \left[3/2, \infty \right)$. In this case $\left \vert \cdot \right \vert_{2r,2(p-1)}$ is a norm and it follows by classical Gaussian concentration that with probability at least $1-C\exp\left(-t^2/2\right)$,
\[
\sum_{i=1}^{n}i^{-2r}X_{[i]}^{2(p-1)}=\left \vert X \right \vert_{2r,2(p-1)}^{2(p-1)}\leq \left[ \mathbb{M}\left \vert X \right \vert_{2r,2(p-1)}+t \mathrm{Lip}\left \vert \cdot \right \vert_{2r,2(p-1)} \right]^{2(p-1)}
\]
Estimates for $\mathbb{M}\left \vert X \right \vert_{2r,2(p-1)}$ and $\mathrm{Lip}\left \vert \cdot \right \vert_{2r,2(p-1)}$ follow from Lemmas \ref{Lo med calculation} and \ref{Lo Lip con}, and we leave the computation to the reader (with the reminder that we are applying these results with $2r$ and $2(p-1)$ instead of $r$ and $p$). Certain numerical simplifications can be made based on the values of $p$ and $r$ and the existance of the factor $C^p$.

\noindent \textbf{Proof. Case II:} We consider $p\in (1,3/2)$ and reclaim the case $p=1$ by taking a limit. For all $x\in \mathbb{R}^n$, by H\"{o}lder's inequality, $\sum_{1}^{n}i^{-2r}x_{[i]}^{2(p-1)}$ is bounded above by
\begin{eqnarray*}
&&C\sum_{i=1}^{n/e}\frac{i^{-4r\left( p-1\right) }x_{\left[ i\right]
}^{2(p-1)}}{\left( \ln \left( n/i\right) +t^{2}/i\right) ^{\left( p-1\right)
\left( 3-2p\right) }}i^{-2r\left( 3-2p\right) }\left( \ln \left( n/i\right)
+t^{2}/i\right) ^{\left( p-1\right) \left( 3-2p\right) } \\
&\leq&C\left( \sum_{i=1}^{n/e}\frac{i^{-2r}x_{\left[ i\right] }}{\left( \ln
\left( n/i\right) +t^{2}/i\right) ^{\left( 3-2p\right) /2}}\right) ^{2\left(
p-1\right) }\left( \sum_{i=1}^{n/e}i^{-2r}\left( \ln \frac{n}{i}+\frac{t^{2}%
}{i}\right) ^{p-1}\right) ^{3-2p}\\
&= &C\left\vert x \right\vert_{\sharp}^{2(p-1)}\left( \sum_{i=1}^{n/e}i^{-2r}\left( \ln \frac{n}{i}+\frac{t^{2}%
}{i}\right) ^{p-1}\right) ^{3-2p}
\end{eqnarray*}%
Eq. (\ref{orderstat estimate}) from Lemma \ref{normalorderstats} then implies that
\[
\left\vert X \right\vert_{\sharp}=\sum_{i=1}^{n/e}\frac{i^{-2r}X_{\left[ i\right] }}{\left( \ln\left( n/i\right) +t^{2}/i\right) ^{\left( 3-2p\right) /2}}
\leq C'\sum_{i=1}^{n/e}i^{-2r}\left( \ln \frac{n}{i}+\frac{t^{2}}{i}\right) ^{p-1}=\frac{S}{C''}
\]
Assuming this event occurs, the above calculation involving H\"{o}lder's inequality implies that $\sum_{1}^{n}i^{-2r}X_{[i]}^{2(p-1)}$ is bounded above by $S$. Lastly,
\begin{eqnarray*}
S\leq C\sum_{i=1}^{n/e}i^{-2r}\left( \ln \frac{n}{i}\right)
^{p-1}+Ct^{2(p-1)}\sum_{i=1}^{n/e}i^{-2r-p+1}
\end{eqnarray*}
which is bounded above using Lemmas \ref{Lo basic sum bound} and \ref{Lo basic power int}.

\noindent \textbf{Proof. Case III:} $p<3/2-2r$ (so necessarily $0\leq r <1/4$ and $1\leq p <3/2$). For $p\neq 1$, by H\"{o}lder's inequality,
\[
\sum_{i=1}^{n}i^{-2r}x_{[i]}^{2(p-1)}\leq \left(\sum_{i=1}^{n}i^{-2r}x_{[i]}\right)^{2(p-1)}\left(\sum_{i=1}^{n}i^{-2r}\right)^{3-2p}\leq Cn^{(1-2r)(3-2p)}\left\vert x \right\vert_{\sharp}^{2(p-1)}
\]
and for $p=1$ the same bound is seen to hold. An upper bound on the quantiles of $\left\vert X \right\vert_{\sharp}$ follows from Gaussian concentration (making use of Lemmas \ref{Lo basic power int} and \ref{Lo med calculation}).

\noindent \textbf{Proof. Case IV:} $p=2(1-r)$ and $r\in \left(1/4,1/2\right]$, in which case $p\in \left[1,3/2\right)$. The sub-case $(1-2r)\ln n < e$ is clear enough by H\"{o}lder's inequality for $p\neq 1$,
\begin{eqnarray*}
\sum_{i=1}^{n}i^{-2r}x_{[i]}^{2(p-1)} &\leq& \left( \sum_{i=1}^{n}\left(i^{-2r}\right)^{\frac{1}{2-p}}\right)^{2-p}\left( \sum_{i=1}^{n}\left(x_{[i]}^{2(p-1)}\right)^{\frac{1}{p-1}}\right)^{p-1}
\end{eqnarray*}
and then noting that the exponent $-2r/(2-p)=-1$ and applying classical Gaussian concentration to $\left\vert \cdot \right\vert$. For $p=1$ there is nothing to show. In this sub-case,
\[
\frac{\left(\ln n \right)^{2-p}}{\ln n}=\exp \left( -(1-2r)\ln\ln n \right)\in \left[e^{-1},1\right] \hspace{1.5cm} n^{1-2r}=\exp\left((1-2r)\ln n\right)\in \left[1,e^e\right)
\]
which is how we simplify the exponents of $\ln n$.

The rest of the proof deals with the other sub-case $(1-2r)\ln n \geq e$. Throughout, we make use of the relation $p=2(1-r)$ which is not always explicitly re-stated, and the reader should make a mental note of this. The case $p=1$ is automatically excluded from this sub-case. By taking the constant $C$ in the probability bound to be at least $\sqrt{e}$ we may assume that $t\geq1$. For any sequence $\left(\alpha_i\right)_1^{\left \lfloor n/e\right \rfloor}$ with $\alpha_i>0$, by H\"{o}lder's inequality,
\begin{eqnarray*}
\sum_{i=1}^{n}i^{-2r}x_{[i]}^{2(p-1)} &\leq&C\left(\sum_{i=1}^{n/e}\alpha_i^{\frac{1}{3-2p}}\right)^{3-2p}\left( \sum_{i=1}^{n/e}i^{\frac{-r}{p-1}}\alpha_i^{\frac{-1}{2(p-1)}}x_{[i]} \right)^{2(p-1)}\\
&=& C\left(\sum_{i=1}^{n/e}\beta_{i}\right)^{3-2p}\left( \sum_{i=1}^{n/e}i^{\frac{-r}{p-1}}\beta_{i}^{\frac{-(3-2p)}{2(p-1)}}x_{[i]} \right)^{2(p-1)}
\end{eqnarray*}
where $\beta_i=\alpha_i^{\frac{1}{3-2p}}$. This vector $\beta\in\mathbb{R}^{\left \lfloor n/e\right \rfloor}$ is considered a variable for now, and its value will later be fixed to match the value quoted in the statement of the result. Summing only up to $n/e$ will ensure that $i^{\frac{2r}{3-2p}}\beta_i$ is non-decreasing in $i$, which then implies that
\begin{equation}
\left\vert x\right\vert_{\sharp}=\sum_{i=1}^{n/e}i^{\frac{-r}{p-1}}\beta_{i}^{\frac{-(3-2p)}{2(p-1)}}x_{[i]} \label{normee}
\end{equation}
is a norm. By classical Gaussian concentration applied to $\left\vert \cdot \right\vert_{\sharp}$, with probability at least $1-C\exp\left(-t^2/2\right)$,
\begin{equation}
\sum_{i=1}^{n}i^{-2r}X_{[i]}^{2(p-1)}\leq \left(\sum_{i=1}^{n/e}\beta_{i}\right)^{3-2p} \left[\mathbb{M} \left\vert X \right\vert_{\sharp} +t \mathrm{Lip}\left( \left\vert \cdot \right\vert_{\sharp}\right) \right]^{2(p-1)} \label{essential bound}
\end{equation}
The median can be estimated using (\ref{orderstat estimate}) and the Lipschitz constant computed as the Euclidean norm of the gradient, which gives
\begin{eqnarray*}
\mathbb{M} \left\vert X \right\vert_{\sharp} &\leq& C\sum_{i=1}^{n/e} i^{\frac{-r}{p-1}} \beta_i^{\frac{-(3-2p)}{2(p-1)}}\left(\ln \frac{n}{i}\right)^{1/2}\\
\mathrm{Lip}\left( \left\vert \cdot \right\vert_{\sharp}\right) &=&\left(\sum_{i=1}^{n/e} i^{\frac{-2r}{p-1}} \beta_i^{\frac{-(3-2p)}{p-1}} \right)^{1/2}
\end{eqnarray*}
We temporarily assume that $\sum\beta_{i}=1$, which we may do by homogeneity, although this condition will later be relaxed. We wish to minimize the function
\[
\psi\left( \beta \right)=\sum_{i=1}^{n/e} i^{\frac{-r}{p-1}} \beta_i^{\frac{-(3-2p)}{2(p-1)}}\left(\ln \frac{n}{i}\right)^{1/2}+t\left(\sum_{i=1}^{n/e} i^{\frac{-2r}{p-1}} \beta_i^{\frac{-(3-2p)}{p-1}} \right)^{1/2}
\]
over the collection of all $\beta\in \mathbb{R}^{\left\lfloor n/e \right\rfloor}$ such that $i^{\frac{2r}{3-2p}}\beta_i$ is positive and non-decreasing in $i$ and such that $\sum\beta_{i}=1$. The method of Lagrange multipliers leads us to solve the equations
\[
\frac{\partial \psi\left(\beta\right)}{\partial \beta_i}=-\lambda
\]
which can be written as
\[
B_1i^{\frac{-r}{p-1}}\left( \ln \frac{n}{i}\right)^{1/2}\beta_i^{\frac{-1}{2(p-1)}}+B_2i^{\frac{-2r}{p-1}}\beta_i^{\frac{-(2-p)}{p-1}}=1
\]
where $B_1$ and $B_2$ are positive values that do not depend on $i$. This implies that
\[
1/2 \leq \max \left\{B_1i^{\frac{-r}{p-1}}\left( \ln \frac{n}{i}\right)^{1/2}\beta_i^{\frac{-1}{2(p-1)}},B_2i^{\frac{-2r}{p-1}}\beta_i^{\frac{-(2-p)}{p-1}}\right\}\leq 1
\]
and therefore
\[
\beta_i\leq \max \left\{2^{2(p-1)}B_1^{2(p-1)}i^{-2r}\left( \ln \frac{n}{i}\right)^{p-1},2^{\frac{p-1}{2-p}}B_2^{\frac{p-1}{2-p}}i^{\frac{-2r}{2-p}} \right\}
\]
with the reverse inequality holding when $2^{2(p-1)}$ and $2^{\frac{p-1}{2-p}}$ are deleted. At this point, and by homogeneity, we remove the condition $\sum \beta_i=1$ and are led to the definition
\[
\beta_i=Ai^{-2r}\left( \ln \frac{n}{i}\right)^{p-1}+i^{-1}
\]
for some $A>0$. $B_1$, $B_2$ and the powers of $2$ dissapear since they do not depend on $i$ and we have re-scaled $\beta$, and we have used the equation $p=2(1-r)$ to simplify the exponent $-2r/(2-p)$. We now minimize over $A$. It follows from Lemma \ref{Lo basic sum bound} that
\begin{eqnarray*}
\sum_{i=1}^{n/e}\beta_i &\leq& CA(1-2r)^{-p}n^{1-2r}+C\ln n
\end{eqnarray*}
With an eye on (\ref{essential bound}), it is clear that the bounds for $\mathbb{M} \left\vert X \right\vert_{\sharp}$ and $\mathrm{Lip}\left( \left\vert \cdot \right\vert_{\sharp}\right)$ are decreasing in $A$. It therefore does not help to let $A$ slip below the point where
\[
CA(1-2r)^{-p}n^{1-2r}=C\ln n
\]
because as $A$ continues to decrease beyond this point $\sum \beta_i$ stays the same order of magnitude while $\mathbb{M} \left\vert X \right\vert_{\sharp}+t\mathrm{Lip}\left( \left\vert \cdot \right\vert_{\sharp}\right)$ increases. We may therefore assume that
\[
A\geq \frac{c(1-2r)^{p}\ln n}{n^{1-2r}} \hspace{3cm} \sum_{i=1}^{n/e}\beta_i \leq CA(1-2r)^{-p}n^{1-2r}
\]
If we look back at (\ref{essential bound}) with our new bound for $\sum \beta_i$ and our definition of $\beta_i$, and we take $A$ out of the expression for $\sum \beta_i$ and move it into the powers of $\beta_i$ with corresponding exponents $\frac{-(3-2p)}{2(p-1)}$ and $\frac{-(3-2p)}{p-1}$ in the expression $\mathbb{M} \left\Vert X \right\Vert +t \mathrm{Lip}\left( \left\Vert \cdot \right\Vert\right)$, we see that these powers of $\beta_i$ become
\[
\left( i^{-2r}\left( \ln \frac{n}{i}\right)^{p-1}+A^{-1}i^{-1} \right)^{\frac{-(3-2p)}{2(p-1)}} \hspace{2cm} \left( i^{-2r}\left( \ln \frac{n}{i}\right)^{p-1}+A^{-1}i^{-1} \right)^{\frac{-(3-2p)}{p-1}}
\]
So, in our current range for $A$, the expression to be minimized (or at least the bound that we have for it) is increasing. This leads us to take
\begin{equation}
A= \frac{(1-2r)^{p}\ln n}{n^{1-2r}} \label{Adef}
\end{equation}
Recall that for $\left\vert \cdot \right\vert_{\sharp}$ to be a norm, see (\ref{normee}), it is sufficient for $i^{2r/(3-2p)}\beta_i$ to be non-decreasing in $i$, equivalently for $i^{-r/(p-1)}\beta_{i}^{-(3-2p)/(2p-2)}$ to be non-increasing. Writing 
\[
\omega_i=i^{\frac{-r}{p-1}}\beta_{i}^{\frac{-(3-2p)}{2(p-1)}}=\left[An^{\frac{4r(p-1)}{3-2p}}\left(\frac{n}{i} \left( \ln \frac{n}{i} \right)^{\frac{-(3-2p)}{4r}}\right)^{\frac{-4r(p-1)}{3-2p}}+i^{\frac{p-1}{3-2p}}\right]^{\frac{-(3-2p)}{2(p-1)}}
\]
and noting that $z\left( \ln z\right)^{-(3-2p)/(4r)}$ is increasing for $z\geq \exp \left((3-2p)/(4r)\right)$, we see that $\omega_i$ is decreasing. We now bound $\mathbb{M} \left\vert X \right\vert_{\sharp}$ and $\mathrm{Lip}\left( \left\vert \cdot \right\vert_{\sharp}\right)$. From the definition of $\beta_i$,
\begin{equation}
\beta_i^{-1} \leq \min\left\{A^{-1}i^{2r}\left(\ln \frac{n}{i}  \right)^{-(p-1)}, i\right\} \label{beta inv}
\end{equation}
which leads us to solve,
\begin{eqnarray*}
A^{-1}i^{2r}\left(\ln \frac{n}{i}  \right)^{-(p-1)}=i
\end{eqnarray*}
Keeping in mind that $1-2r=p-1$, the above equation holds precisely when
\begin{equation}
\frac{n}{i} \left(\ln \frac{n}{i}\right)^{-1}=A^{\frac{1}{p-1}}n \label{eoi}
\end{equation}
The function $z\mapsto z/\ln z$ is increasing on $[e,\infty)$ and we will show that provided $n>n_0$ (for a universal constant $n_0>1$),
\begin{equation}
\frac{1}{2}e^{2}\leq A^{\frac{1}{p-1}}n \leq \frac{2e}{3\ln n}n^{\frac{3}{2e}} \label{rangee}
\end{equation}
so that (\ref{eoi}) has exactly one solution for $i\in [n^{1-\frac{3}{2e}},n/e^2]$, denoted $A_0$ (not necessarily an integer) which satisfies
\begin{eqnarray}
n^{1-\frac{3}{2e}}&\leq& A_0\leq e^{-2}n\\
A_0 \ln \frac{n}{A_0}&=&A^{\frac{-1}{p-1}} \label{basic AA0}\\
\ln \left(A^{1/(p-1)}n \right)&=& \ln \frac{n}{A_0} - \ln \ln \frac{n}{A_0}\\
\ln \left(A^{1/(p-1)}n \right) &\leq& \ln \left( \frac{n}{A_0}\right) \leq \left( 1-\frac{1}{e}\right) \ln \left(A^{1/(p-1)}n \right) \label{logsall}
\end{eqnarray}
The assumption $n>n_0$ does not limit our generality since the result is directly seen to hold when $n\leq n_0$ in which case many of the coefficients involved are bounded by constants. From (\ref{logsall}) and the defining inequality of the current sub-case, it follows that
\[
\frac{1}{2}\ln \left((1-2r)\ln n \right) \leq (1-2r)\ln \left( \frac{n}{A_0}\right) \leq \left( 1-\frac{1}{e}\right) \ln \left((1-2r)\ln n \right)
\]
For the left inequality we used the fact that $p-1\in (0,1/2)$ and $z\ln z \geq -1/e$ for $z\in (0,1/2)$. The right inequality is more straightforward. We now verify (\ref{rangee}). Recalling (\ref{Adef}) and the fact that $1-2r=p-1$, which we are using constantly, the lower bound in (\ref{rangee}) holds provided
\[
\ln n \geq \frac{e^{(2-\ln 2)(p-1)}}{(p-1)^p}
\]
From the definition of the current sub-case, $\ln n \geq e/(p-1)$, so a sufficient condition for the above inequality to hold is
\[
2-\ln2 \leq \frac{1}{p-1}+\ln (p-1)
\]
which is true by considering $1/z+\ln z$ for $z\in \left( 0,1/2 \right)$. The upper bound in (\ref{rangee}) holds provided
\[
(p-1)\ln n \leq \exp\left( \frac{p-1}{p} \ln \left(\frac{2e}{3}n^{\frac{3}{2e}}\right)\right)
\]
which holds by applying $e^z\geq ez$. This completes the task of verifying (\ref{rangee}). From (\ref{beta inv}) it follows that
\begin{equation}
\beta_i^{-1}\leq
\left\{ 
\begin{array}{ccc}
Ci & : & i\leq A_0 \\ 
CA^{-1}i^{2r}\left(\ln \frac{n}{i}\right)^{-(p-1)} & : & i> A_0%
\end{array}%
\right. \label{piecewise beta}
\end{equation}
In an integral where the integrand grows or decays at a controlled rate, one can change an upper bound of $A_0+1$ to $A_0$ at the expense of a constant. Using
\[
 \int_{a}^{b}e^{-\omega}\omega^{1/2}d\omega \leq \frac{C(b-a)e^{-a}a^{1/2}}{1+b-a}
\]
valid as long as $1\leq a \leq b$, and
\[
\int_{0}^{b}e^{-\omega}\omega^{p-1}d\omega \leq C
\]
which gives the correct order of magnitude for (say) $b\geq 1/2$, $\mathbb{M}\left\vert X \right\vert_{\sharp}$ is bounded above by
\begin{eqnarray*}
&&C^{\frac{1}{p-1}}\sum_{i=1}^{A_0}i^{-\frac{1}{2}}\left( \ln \frac{n}{i} \right)^{\frac{1}{2}}+C^{\frac{1}{p-1}}A^{\frac{-(3-2p)}{2(p-1)}}\sum_{i=A_0}^{n/e}i^{-2r}\left( \ln \frac{n}{i} \right)^{p-1}\\
&\leq& C^{\frac{1}{p-1}}n^{-\frac{1}{2}}\int_1^{A_0}\left( \frac{n}{x}\right)^{1/2}\left( \ln \frac{n}{x}\right)^{1/2}dx+C^{\frac{1}{p-1}}A^{\frac{-(3-2p)}{2(p-1)}}n^{-2r}\int_{A_0}^{n/e}\left( \frac{n}{x}\right)^{2r}\left( \ln \frac{n}{x}\right)^{p-1}dx\\
&\leq& C^{\frac{1}{p-1}}n^{\frac{1}{2}}\int_{\frac{1}{2}\ln \frac{n}{A_0}}^{\frac{1}{2}\ln n}e^{-\omega}\omega^{1/2}d\omega +C^{\frac{1}{p-1}}A^{\frac{-(3-2p)}{2(p-1)}}(1-2r)^{-p}n^{1-2r}\int_{1-2r}^{(1-2r)\ln \frac{n}{A_0}}e^{-\omega}\omega^{p-1}d\omega\\
&\leq& C^{\frac{1}{p-1}}A_0^{\frac{1}{2}}\left(\ln \frac{n}{A_0}\right)^{\frac{1}{2}}+C^{\frac{1}{p-1}}A^{\frac{-(3-2p)}{2(p-1)}}(1-2r)^{-p}n^{1-2r}\\
&\leq& C^{\frac{1}{p-1}}A^{\frac{-(3-2p)}{2(p-1)}}(1-2r)^{-p}n^{1-2r}
\end{eqnarray*}
We claim that
\begin{eqnarray*}
\int_0^be^{\omega}\omega^{-(3-2p)}d\omega \leq
\left\{ 
\begin{array}{ccc}
C(p-1)^{-1}b^{2(p-1)} & : & 0\leq b\leq 1 \\ 
C(p-1)^{-1}+Ce^bb^{-(3-2p)} & : & b \geq 1%
\end{array}%
\right.
\end{eqnarray*}
For $0\leq b \leq 1$ this is clear. For $b\geq3$ this follows because on $[2,\infty)$ the local exponential growth rate of the integrand is
\[
\frac{d}{d\omega}\left[\omega-(3-2p)\ln \omega\right]=1-\frac{3-2p}{\omega}\in\left[0.5,1\right]
\]
and for $1<b<3$ the bound follows by monotonicity in $b$. Using the claim just proved, $\textrm{Lip}\left(\left\vert \cdot \right\vert_{\sharp} \right)$ is bounded above by
\begin{eqnarray*}
&&\left[C^{\frac{1}{p-1}}\ln A_0+C^{\frac{1}{p-1}}A^{\frac{-(3-2p)}{p-1}}n^{-4r}\int_{A_0}^{n/e}\left( \frac{n}{x}\right)^{4r}\left( \ln \frac{n}{x}\right)^{-(3-2p)}dx \right]^{\frac{1}{2}}\\
&\leq&C^{\frac{1}{p-1}}\left(\ln A_0 \right)^{\frac{1}{2}}+C^{\frac{1}{p-1}}A^{\frac{-(3-2p)}{2(p-1)}}(4r-1)^{1-p}n^{\frac{1-4r}{2}}\left(\int_{4r-1}^{(4r-1)\ln \frac{n}{A_0}}e^{\omega}\omega^{-(3-2p)}d\omega \right)^{\frac{1}{2}}
\end{eqnarray*}
If $(4r-1)\ln \left(n/A_0\right)<1$ then this is bounded by
\[
C^{\frac{1}{p-1}}\left(\ln n \right)^{\frac{1}{2}}+C^{\frac{1}{p-1}}A^{\frac{-(3-2p)}{2(p-1)}}n^{\frac{1-4r}{2}}\left(\ln \frac{n}{A_0}\right)^{p-1}\leq C^{\frac{1}{p-1}}\left(\ln n \right)^{\frac{1}{2}}
\]
To see why the last inequality is true, note that the inequality
\[
\left(\ln n \right)^{\frac{1}{2}}\geq A^{\frac{-(3-2p)}{2(p-1)}}n^{\frac{1-4r}{2}}\left(\ln \frac{n}{A_0}\right)^{p-1}
\]
reduces to
\[
A_0^{\frac{3-2p}{2}}\left(\ln \frac{n}{A_0}\right)^{\frac{1}{2}}\leq n^{\frac{4r-1}{2}}\left(\ln n\right)^{\frac{1}{2}}
\]
which in turn follows since $1\leq A_0\leq n$ and $3-2p=4r-1$. If $(4r-1)\ln \left(n/A_0\right)\geq1$ then using $4r-1=3-2p$ and $A_0\ln \left(n/A_0\right)=A^{-1/(p-1)}$, we get the same bound, i.e.
\[
\textrm{Lip}\left(\left\vert \cdot \right\vert_{\sharp} \right)\leq C^{\frac{1}{p-1}}\left(\ln n \right)^{\frac{1}{2}}
\]
Going all the way back to (\ref{essential bound}), regardless of whether $(4r-1)\ln \left(n/A_0\right)$ lies in $[0,1)$ or $[1,\infty)$,
\[
\sum_{i=1}^ni^{-2r}X_{[i]}^{2(p-1)}\leq C (1-2r)^{-p}n^{1-2r}+Ct^{2(p-1)}\left( \ln n \right)^{2-p}
\]
Then note that
\[
\frac{(1-2r)^{-p}}{(1-2r)^{-1}}=\exp \left( -(p-1) \ln (p-1) \right) \in (c,1)
\]
\end{proof}

\section{Statement and proof of the main result}

The following diagram indicates the various cases considered in Theorem \ref{LoMain}; it will be useful to refer back to it when reading the proof.

\bigskip

\begin{tikzpicture}
\fill[black!08!white] (0,4.5) -- (1.5,4.5) -- (1.5,8) -- (0,8) -- cycle;
\fill[black!05!white] (1.5,4.5) -- (3,4.5) -- (3,8) -- (1.5,8) -- cycle;
\fill[black!08!white] (3,4.5) -- (6,4.5) -- (6,8) -- (3,8) -- cycle;
\fill[black!05!white] (0,4.5) -- (0.75,3) -- (1.5,3) -- (1.5,4.5) -- cycle;
\fill[black!08!white] (1.5,3) -- (3,3) -- (3,4.5) -- (1.5,4.5) -- cycle;
\fill[black!05!white] (3,3) -- (6,3) -- (6,4.5) -- (3,4.5) -- cycle;
\fill[black!10!white] (0,3) -- (0.75,3) -- (0,4.5) -- cycle;
\draw[thick,->] (0,0) -- (6.5,0) node[anchor=north west] {$r$};
\draw[thick,->] (0,0) -- (0,7) node[anchor=south east] {$p$};
\draw (0 cm,1pt) -- (0 cm,-1pt) node[anchor=north] {$0$};
\draw (0.75 cm,1pt) -- (0.75 cm,-1pt) node[anchor=north] {$\frac{1}{4}$};
\draw (1.5 cm,1pt) -- (1.5 cm,-1pt) node[anchor=north] {$\frac{1}{2}$};
\draw (3 cm,1pt) -- (3 cm,-1pt) node[anchor=north] {$1$};
\draw (6 cm,1pt) -- (6 cm,-1pt) node[anchor=north] {$2$};
\draw (1pt,3 cm) -- (-1pt,3 cm) node[anchor=east] {$1$};
\draw (1pt,4.5 cm) -- (-1pt,4.5 cm) node[anchor=east] {$\frac{3}{2}$};
\draw (1pt,6 cm) -- (-1pt,6 cm) node[anchor=east] {$2$};
\draw[black!17.5!white,dashed] (0.75,0) -- (0.75,7);
\draw[black!17.5!white,dashed] (1.5,0) -- (1.5,7);
\draw[black!17.5!white,dashed] (3,0) -- (3,3);
\draw[black!20!white,dashed] (0,6) -- (3,0);
\draw[black!20!white,dashed] (0,4.5) -- (2.25,0);
\draw[dashed] (0,3) -- (6,3);
\draw[dashed] (0,4.5) -- (6,4.5);
\draw[dashed] (3,3) -- (3,7.5);
\draw[dashed] (6,3) -- (6,7.5);
\draw[black!12!white,dashed] (0,6) -- (5,6);
\draw[dashed] (1.5,3) -- (1.5,7.5);
\draw[dashed] (0,4.5) -- (0.75,3);
\draw[thick] (0.75,4.5) -- (1.5,3);
\filldraw[black](5.5,-2.5) circle (0pt) node[anchor=west]{Figure 1};
\filldraw[black](0.5,6) circle (0pt) node[anchor=west]{ia};
\filldraw[black](1.8,6) circle (0pt) node[anchor=west]{ib*};
\filldraw[black](3.8,6) circle (0pt) node[anchor=west]{ib**};
\filldraw[black](0.5,3.75) circle (0pt) node[anchor=west]{iia};
\filldraw[black](1.8,3.75) circle (0pt) node[anchor=west]{iib*};
\filldraw[black](3.8,3.75) circle (0pt) node[anchor=west]{iib**};
\filldraw[black](0,3.2) circle (0pt) node[anchor=west]{iii};
\filldraw[black](1.7,2) circle (0pt) node[anchor=west]{iv};
\draw[black,thick,->] (1.875,2.25) -- (1.575,2.85);
\filldraw[black](6.5,7.5) circle (0pt) node[anchor=west]{Case ia: $\frac{3}{2}\leq p<\infty$, $0\leq r \leq \frac{1}{2}$};
\filldraw[black](6.5,6.5) circle (0pt) node[anchor=west]{Case ib*: $\frac{3}{2}\leq p<\infty$, $\frac{1}{2}< r \leq 1 $};
\filldraw[black](6.5,5.5) circle (0pt) node[anchor=west]{Case ib**: $\frac{3}{2}\leq p<\infty$, $1< r \leq 2 $};
\filldraw[black](6.5,4.5) circle (0pt) node[anchor=west]{Case iia: $1\leq p<\frac{3}{2}$, $\frac{3-2p}{4}\leq r \leq \frac{1}{2}$, $p\neq 2-2r$};
\filldraw[black](6.5,3.5) circle (0pt) node[anchor=west]{Case iib*: $1\leq p<\frac{3}{2}$, $\frac{1}{2}< r \leq 1$};
\filldraw[black](6.5,2.5) circle (0pt) node[anchor=west]{Case iib**: $1\leq p<\frac{3}{2}$, $1< r \leq 2$};
\filldraw[black](6.5,1.5) circle (0pt) node[anchor=west]{Case iii: $1\leq p<\frac{3}{2}$, $p< \frac{3}{2}-2r$};
\filldraw[black](6.5,0.5) circle (0pt) node[anchor=west]{Case iv: $1\leq p<\frac{3}{2}$, $p=2-2r$};

\draw[black,dashed] (0,-1.5) -- (1.2,-1.5);
\filldraw[black](1.2,-1.5) circle (0pt) node[anchor=west]{: boundary of a region};
\draw[black!25!white,dashed] (6.6,-1.55) -- (7.8,-1.5);
\filldraw[black](7.8,-1.5) circle (0pt) node[anchor=west]{: other relevant line};
\end{tikzpicture}

\begin{theorem}
\label{LoMain}There exist universal constants $C,c>0$ and a function $\left(r,p\right)\mapsto c_{r,p}$ from $ [0,2]\times [1,\infty)$ to $(0,\infty)$ such that the
following is true. Let%
\[
\left( n,k,\varepsilon ,r,p\right) \in \mathbb{N}\times \mathbb{N}\times
\left( 0,1/2\right) \times \left[0,2\right] \times \left[ 1,\infty
\right) 
\]%
and let $G$ be a random $n\times k$ matrix with i.i.d. standard normal
random variables as entries. Cases i-iv will be defined as in Figure 1 above. In each case variables $E$ and $F$ will be defined, and as long as $k\leq \min \left\{E,F\right\} $, with probability at least $1-C\exp \left(
-\min \left\{E,F\right\} \right) $ the following event
occurs: for all $x\in \mathbb{R}^{k}$,%
\[
\left( 1-\varepsilon \right) M_{r,p}\left\vert x\right\vert \leq \left\vert
Gx\right\vert _{r,p}\leq \left( 1+\varepsilon \right) M_{r,p}\left\vert
x\right\vert 
\]%
where $M_{r,p}$ denotes the median of $\left\vert Ge_{1}\right\vert _{r,p}$. $E$ and $F$ are defined as follows:

\noindent  In \underline{Case ia}: $\frac{3}{2}\leq p<\infty$, $0\leq r \leq \frac{1}{2}$ and
 \begin{eqnarray*}
E&=&\frac{c^pn\left(\ln n \right)^2\left[p+(1-2r)\ln n\right]^p\varepsilon^2}{\left(p+\ln n\right)^{2+p}}
\geq\left\{ 
\begin{array}{ccc}
c_{r,p}n\varepsilon^2 & : & r\neq 1/2 \\ 
c_{r,p}n\left(\ln n\right)^{-p}\varepsilon^2& : & r=1/2
\end{array}%
\right. \\
F&=&\frac{cpn^{\frac{2(1-r)}{p}}\left(\ln n\right)^{1+\frac{2}{p}}\varepsilon^{\frac{2}{p}}}{\left(1+n^{\frac{2-2r-p}{p}}\right)\left(p+\ln n\right)^{1+\frac{2}{p}}}\left(\frac{1+\left\vert 2-2r-p \right\vert \ln n}{\ln n}\right)^{\max\left\{\frac{2-p}{p},0\right\}}\\
&\geq&\left\{
\begin{array}{ccc}
c_{r,p}n\varepsilon^{\frac{2}{p}} & : & p<2-2r \\ 
c_{r,p}n\left(\ln n\right)^{-\left(\frac{2}{p}-1\right)}\varepsilon^{\frac{2}{p}} & : & p=2-2r \\ 
c_{r,p}n^{\frac{2(1-r)}{p}}\varepsilon^{\frac{2}{p}} & : & p>2-2r
\end{array}%
\right.
 \end{eqnarray*}
In \underline{Case ib*}: $\frac{3}{2}\leq p<\infty$, $\frac{1}{2}< r \leq 1 $ and
 \begin{eqnarray*}
E&=&\frac{c^pp^pn^{2(1-r)}\left(\ln n\right)^2\left[1+(2r-1)\ln n \right]\varepsilon^2}{\left[p+(1-r)\ln n \right]^{2+p}}\geq\left\{ 
\begin{array}{ccc}
c_{r,p}n^{2(1-r)}\left(\ln n\right)^{-(p-1)}\varepsilon^2 & : & r\neq 1 \\ 
c_{r,p}n^{2(1-r)}\left(\ln n\right)^{3}\varepsilon^2& : & r=1
\end{array}%
\right.\\
F&=&\frac{cpn^{\frac{2(1-r)}{p}}\left(\ln n\right)^{1+\frac{2}{p}}\varepsilon^{\frac{2}{p}}}{\left[p+(1-r)\ln n\right]^{1+\frac{2}{p}}}\geq\left\{ 
\begin{array}{ccc}
c_{r,p}n^{\frac{2(1-r)}{p}}\varepsilon^{\frac{2}{p}} & : & r\neq 1 \\ 
c_{r,p}n^{\frac{2(1-r)}{p}}\left(\ln n\right)^{1+\frac{2}{p}}\varepsilon^{\frac{2}{p}}& : & r=1
\end{array}%
\right.
 \end{eqnarray*}
In \underline{Case ib**}: $\frac{3}{2}\leq p<\infty$, $1< r \leq 2 $ and
\begin{eqnarray*}
E&=&\frac{c^p\left(\ln n\right)^3\varepsilon^2}{\left[1+(r-1)\ln n\right]^2}\geq\left\{ 
\begin{array}{ccc}
c_{r,p}\left(\ln n\right)\varepsilon^2 & : & r\neq 1 \\ 
c_{r,p}\left(\ln n\right)^{3}\varepsilon^2& : & r=1
\end{array}%
\right.\\
F&=&\frac{c\left(\ln n\right)^{1+\frac{2}{p}}\varepsilon^{\frac{2}{p}}}{\left[1+(r-1)\ln n\right]^\frac{2}{p}}\geq\left\{ 
\begin{array}{ccc}
c_{r,p}\left(\ln n\right)\varepsilon^{\frac{2}{p}} & : & r\neq 1 \\ 
c_{r,p}\left(\ln n\right)^{1+\frac{2}{p}}\varepsilon^{\frac{2}{p}}& : & r=1
\end{array}%
\right.
 \end{eqnarray*}
In \underline{Case iia}: $1\leq p<\frac{3}{2}$, $\frac{3-2p}{4}\leq r \leq \frac{1}{2}$, $p\neq 2-2r$ and
\begin{eqnarray*}
E&=&\frac{cn\left[1+(1-2r)\ln n\right]^p\varepsilon^2}{\left(\ln n\right)^p}\geq\left\{ 
\begin{array}{ccc}
c_{r,p}n\varepsilon^2 & : & r\neq 1/2 \\ 
c_{r,p}n\left(\ln n\right)^{-p}\varepsilon^2& : & r=1/2
\end{array}%
\right.\\
F&=&\frac{cn^{\frac{2(1-r)}{p}}\varepsilon^{\frac{2}{p}}}{1+n^{\frac{2-2r-p}{p}}}\left(\frac{1+\left\vert2-2r-p\right\vert \ln n}{\ln n}\right)^{1/p}\geq\left\{
\begin{array}{ccc}
c_{r,p}n\varepsilon^{\frac{2}{p}} & : & p<2-2r \\ 
c_{r,p}n\left(\ln n\right)^{-\frac{1}{p}}\varepsilon^{\frac{2}{p}} & : & p=2-2r \\ 
c_{r,p}n^{\frac{2(1-r)}{p}}\varepsilon^{\frac{2}{p}} & : & p>2-2r
\end{array}%
\right.
\end{eqnarray*}
 In \underline{Case iib*}: $1\leq p<\frac{3}{2}$, $\frac{1}{2}< r \leq 1$ and
 \begin{eqnarray*}
E&=&\frac{cn^{2(1-r)}\left(\ln n\right)^{2}\left[1+(2r-1)\ln n\right]\varepsilon^2}{\left[1+(1-r)\ln n\right]^{2+p}}\geq\left\{ 
\begin{array}{ccc}
c_{r,p}n^{2(1-r)}\left(\ln n\right)^{-(p-1)}\varepsilon^{2} & : & r\neq 1 \\ 
c_{r,p}n^{2(1-r)}\left(\ln n\right)^{3}\varepsilon^{2}& : & r=1
\end{array}%
\right.\\
F&=&\frac{cn^{\frac{2(1-r)}{p}}\left(\ln n\right)^{1+\frac{1}{p}}\left[1+\left\vert2-2r-p\right\vert \ln n\right]^{\frac{1}{p}}\varepsilon^{\frac{2}{p}}}{\left[1+(1-r)\ln n\right]^{1+\frac{2}{p}}}
\geq\left\{ 
\begin{array}{ccc}
c_{r,p}n^{\frac{2(1-r)}{p}}\varepsilon^{\frac{2}{p}} & : & r\neq 1 \\ 
c_{r,p}n^{\frac{2(1-r)}{p}}\left(\ln n\right)^{1+\frac{2}{p}}\varepsilon^{\frac{2}{p}} & : & r=1
\end{array}%
\right.
 \end{eqnarray*}
In \underline{Case iib**}: $1\leq p<\frac{3}{2}$, $1< r \leq 2$ and
\begin{eqnarray*}
E&=&\frac{c\left(\ln n\right)^3\varepsilon^2}{\left[1+(r-1)\ln n\right]^2}\geq c_{r,p}\left(\ln n\right)\varepsilon^2 \\
F&=&\frac{c\left(\ln n\right)^{1+\frac{2}{p}}\varepsilon^{\frac{2}{p}}}{\left[1+(r-1)\ln n\right]^\frac{2}{p}}
\geq c_{r,p}\left(\ln n\right)\varepsilon^{\frac{2}{p}}
\end{eqnarray*}
 In \underline{Case iii}: $1\leq p<\frac{3}{2}$, $p< \frac{3}{2}-2r$ and
\begin{eqnarray*}
E&=&cn\varepsilon^2\\
F&=&\frac{cn\left[1+(1-4r)\ln n\right]^{1-\frac{1}{p}}\varepsilon^{\frac{2}{p}}}{\left(\ln n\right)^{1-\frac{1}{p}}}\geq c_{r,p}n\varepsilon^{\frac{2}{p}}
\end{eqnarray*}
In \underline{Case iv}: $1\leq p<\frac{3}{2}$, $p=2-2r$ and
\begin{eqnarray*}
E&=&\frac{cn\left[1+(1-2r)\ln n\right]\varepsilon^2}{\ln n}
\geq\left\{ 
\begin{array}{ccc}
c_{r,p}n\varepsilon^2 & : & r\neq 1/2 \\ 
c_{r,p}n\left(\ln n\right)^{-1}\varepsilon^2 & : & r=1/2
\end{array}%
\right.\\
F&=&\frac{cn\varepsilon^{\frac{2}{p}}}{\left(\ln n\right)^{\frac{2-p}{p}}}
 \end{eqnarray*}    
\end{theorem}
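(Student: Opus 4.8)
The approach is the one sketched in Section 4: instead of working with $\left\vert\cdot\right\vert_{r,p}$ directly we study its $p$-th power $f(x)=\left\vert x\right\vert_{r,p}^{p}=\sum_{i=1}^{n}i^{-r}x_{[i]}^{p}$, which, unlike $\left\vert\cdot\right\vert_{r,p}$, is Lipschitz with a small constant once restricted to an appropriate convex body containing the origin. By homogeneity it is enough to prove the conclusion for unit vectors $\theta\in S^{k-1}$, and since $\varepsilon<1/2$ and $p\geq1$ give $(1-\varepsilon)^{p}\leq1-\varepsilon$ and $(1+\varepsilon)^{p}\geq1+\varepsilon$, the required estimate $(1-\varepsilon)^{p}M_{r,p}^{p}\leq f(G\theta)\leq(1+\varepsilon)^{p}M_{r,p}^{p}$ will follow once we show that, on the stated event, $\left\vert f(G\theta)-M_{r,p}^{p}\right\vert\leq\varepsilon M_{r,p}^{p}$ for every $\theta$; here $M_{r,p}^{p}=\mathbb{M}\left\vert Ge_{1}\right\vert_{r,p}^{p}$ since the median commutes with $s\mapsto s^{1/p}$.

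Fix a parameter $t\geq1$, to be optimised later. The function $f$ is continuous on $\mathbb{R}^{n}$ and differentiable at every point with distinct nonzero coordinates, where differentiating through the locally constant rearrangement gives $\left\vert\nabla f(x)\right\vert=p\big(\sum_{i=1}^{n}i^{-2r}x_{[i]}^{2(p-1)}\big)^{1/2}$. In each of the regions i--iv of Figure 1 we invoke the matching case of Theorem \ref{orderorderbound}: Case I in regions ia, ib$^{*}$, ib$^{**}$; Case II in regions iia, iib$^{*}$, iib$^{**}$, where the hypothesis $p\geq3/2-2r$ needed for $\left\vert\cdot\right\vert_{\sharp}$ to be a norm holds; Case III in region iii; Case IV in region iv. This produces a norm $\left\vert\cdot\right\vert_{\sharp}$ and numbers $R=R(t)$, $S=S(t)$ with $\mathbb{P}\{X\notin K\}\leq C\exp(-t^{2}/2)$, where $K=\{x:\left\vert x\right\vert_{\sharp}\leq CS\}$, and with $\sum_{i}i^{-2r}x_{[i]}^{2(p-1)}\leq R'$ for all $x\in K$, where $R'\leq C^{p}R$ (enlarging the radius from $S$ to $CS$ costs only a factor $C^{2(p-1)}$, the left-hand side being homogeneous of degree $2(p-1)$ in $x$). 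By Lemma \ref{diff restrict}, $\mathrm{Lip}(f|_{K})$ equals the supremum of $\left\vert\nabla f(x)\right\vert$ over those $x\in K$ with distinct nonzero coordinates, hence is at most $p\sqrt{R'}$. Extend $f|_{K}$ to $F:\mathbb{R}^{n}\to\mathbb{R}$ with $\mathrm{Lip}(F)=\mathrm{Lip}(f|_{K})$ (the procedure of \cite{BNT}); since $F=f$ on $K$ and $\mathbb{P}\{X\in K\}>1/2$, Theorem \ref{clasgaus} applied to $F$ together with $\mathbb{P}\{F(X)\neq f(X)\}$ being small yields $\left\vert\mathbb{M}F(X)-M_{r,p}^{p}\right\vert\leq Ctp\sqrt{R'}$.

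Now apply Theorem \ref{Schechtbound} twice, each time under the hypothesis $k\leq ct^{2}$. Applied to $F$, it gives, with probability at least $1-C\exp(-ct^{2})$, $\sup_{\theta\in S^{k-1}}\left\vert F(G\theta)-\mathbb{M}F(X)\right\vert\leq tp\sqrt{R'}$. Applied to the norm $\left\vert\cdot\right\vert_{\sharp}$, whose Lipschitz constant is $b(\left\vert\cdot\right\vert_{\sharp})$, it gives, with probability at least $1-C\exp(-ct^{2})$, $\sup_{\theta}\left\vert G\theta\right\vert_{\sharp}\leq\mathbb{M}\left\vert X\right\vert_{\sharp}+Ct\,b(\left\vert\cdot\right\vert_{\sharp})$; in every case of Theorem \ref{orderorderbound} one checks, with the help of Lemmas \ref{Lo med calculation}, \ref{Lo Lip con}, \ref{Lo basic sum bound} and \ref{Lo basic power int}, that this last quantity is comparable to $S$, so that after choosing the constant defining $K$ large enough it is at most the radius $CS$ of $K$, and hence $G\theta\in K$ for all $\theta$ on this event. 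On the intersection of the two events we therefore have $f(G\theta)=F(G\theta)$ for every $\theta$, so $\left\vert f(G\theta)-M_{r,p}^{p}\right\vert\leq tp\sqrt{R'}+\left\vert\mathbb{M}F(X)-M_{r,p}^{p}\right\vert\leq Ctp\sqrt{R'}$ uniformly in $\theta$.

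It remains to choose $t$. In each case $R(t)\leq A+Bt^{2(p-1)}$ with $A$ the $t$-free part and $B$ the coefficient of $t^{2(p-1)}$, so the requirement $Ctp\sqrt{R'(t)}\leq\varepsilon M_{r,p}^{p}$ reduces to $p^{2}At^{2}+p^{2}Bt^{2p}\leq c\varepsilon^{2}M_{r,p}^{2p}$, which holds once $t^{2}\leq c\min\{\varepsilon^{2}M_{r,p}^{2p}/(p^{2}A),(\varepsilon^{2}M_{r,p}^{2p}/(p^{2}B))^{1/p}\}$. We take $t^{2}$ equal to this minimum and let $E$, $F$ be the corresponding constant multiples of $\varepsilon^{2}M_{r,p}^{2p}/(p^{2}A)$ and $(\varepsilon^{2}M_{r,p}^{2p}/(p^{2}B))^{1/p}$, so that the hypothesis $k\leq\min\{E,F\}$ is exactly what legitimises both applications of Theorem \ref{Schechtbound}, and the failure probability becomes $\leq C\exp(-\min\{E,F\})$ after the rescaling of constants explained earlier. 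Finally one substitutes Lemma \ref{Lo med calculation} for $M_{r,p}$ and the explicit $A$, $B$ (equivalently $R$, $S$) of each case of Theorem \ref{orderorderbound}, and, using Lemmas \ref{Lo basic sum bound} and \ref{Lo basic power int} and the fact that factors such as $p^{2/p}$, $2^{p}$ and $\big((p+(1-r)\ln n)/(p+\ln n)\big)^{p}$ are absorbed into the coefficients $C^{p}$ or, once $r$ and $p$ are fixed, into $c_{r,p}$, one reads off the displayed formulas for $E$ and $F$ region by region and their clean lower bounds, the latter being just the large-$n$ orders of magnitude made uniform in $n$. The case $p=1$, where $f$ is itself a norm and $2(p-1)=0$, reduces to the same argument run with the global Lipschitz constant $b(\left\vert\cdot\right\vert_{r,1})$, equivalently to Theorem \ref{LoMilGen}. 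The bulk of the labour is this final region-by-region substitution together with the verification that $S$ serves as a uniform bound over $S^{k-1}$; the one genuinely hard ingredient --- dominating $\sum_{i}i^{-2r}x_{[i]}^{2(p-1)}$ by a power of a norm with constants that do not blow up as $p\to1$, particularly along $p=2-2r$ --- has already been supplied by Theorem \ref{orderorderbound}.
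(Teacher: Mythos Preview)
Your overall strategy---study $\psi(x)=\sum i^{-r}x_{[i]}^{p}$, bound its gradient via Theorem~\ref{orderorderbound}, restrict to a sublevel set of the auxiliary norm $\lvert\cdot\rvert_{\sharp}$, extend with the same Lipschitz constant, apply Schechtman's uniform concentration, then invert---is the paper's strategy. Two technical choices differ from the paper and are worth flagging.

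First, to show $G\theta\in K$ for all $\theta\in S^{k-1}$ you apply Theorem~\ref{Schechtbound} to $\lvert\cdot\rvert_{\sharp}$, which requires $\mathbb{M}\lvert X\rvert_{\sharp}+Ct\,b(\lvert\cdot\rvert_{\sharp})\leq CS$. In Cases I, III, IV this is exactly how $S$ was built, so there is nothing to check; but in Case~II the pointwise bound $\lvert X\rvert_{\sharp}\leq S$ comes from the simultaneous order-statistic estimate (\ref{orderstat estimate}), \emph{not} from Gaussian concentration for $\lvert\cdot\rvert_{\sharp}$, and the needed comparison $t\,b(\lvert\cdot\rvert_{\sharp})\leq CS$ is a separate (if plausible) calculation that you have not carried out. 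The paper sidesteps this entirely: it takes a $1/4$-net $\mathcal{N}\subset S^{k-1}$, uses the union bound with the pointwise event $\{\lvert G\omega\rvert_{\sharp}\leq S\}$ already supplied by Theorem~\ref{orderorderbound}, and passes to arbitrary $\theta$ by the triangle inequality for the norm $\lvert\cdot\rvert_{\sharp}$, obtaining $\lvert G\theta\rvert_{\sharp}\leq(4/3)S$. This is both shorter and avoids the extra estimate on $b(\lvert\cdot\rvert_{\sharp})$.

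Second, your passage from concentration of $\psi$ to that of $\psi^{1/p}$ uses only $(1\pm\varepsilon)^{1/p}\in[1-\varepsilon,1+\varepsilon]$. The paper instead uses $\lvert 1-u^{1/p}\rvert\leq 2p^{-1}\lvert 1-u\rvert$ for $\lvert 1-u\rvert\leq 1/2$, so it may take $\lvert\psi/\mathbb{M}\psi-1\rvert\leq p\varepsilon/4$ rather than $\leq\varepsilon$; this relaxes the constraint on $t$ by a factor of order $p$, and hence increases $t^{2}$ (and thus $E$, $F$) by a factor of order $p^{2}$. That is where the explicit $p$-prefactors such as $c^{p}p^{p}$ in Case~ib$^{*}$ and the leading $cp$ in $F$ come from; with your conversion those factors would be smaller by $p^{2}$ (respectively $p^{2/p}$). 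For the $c_{r,p}$ lower bounds this is harmless, but not for the full formulas as stated.
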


\bigskip

The parameters $E$ and $F$ in Theorem \ref{LoMain} can be bounded as follows:
\[
E\geq \left\{ 
\begin{array}{ccc}
c_{r,p}n\varepsilon^2 & : & 0\leq r<1/2\\ 
c_{r,p}n\left(\ln n\right)^{-p}\varepsilon^2 & : & r=1/2\\
c_{r,p}n^{2(1-r)}\left(\ln n\right)^{-(p-1)}\varepsilon^2 & : & 1/2<r<1\\
c_{r,p}\left(\ln n\right)^{3}\varepsilon^2 & : & r=1
\end{array}%
\right.
\]
and
\[
F\geq \left\{ 
\begin{array}{ccc}
c_{r,p}n\varepsilon^{\frac{2}{p}} & : & 0\leq r\leq1/2, p<2-2r\\ 
c_{r,p}n\left(\ln n\right)^{1-\frac{2}{p}}\varepsilon^{\frac{2}{p}} & : & 0\leq r\leq1/2, p=2-2r\\
c_{r,p}n^{\frac{2(1-r)}{p}}\varepsilon^{\frac{2}{p}} & : & 0\leq r\leq1/2, p>2-2r\\
c_{r,p}n^{\frac{2(1-r)}{p}}\varepsilon^{\frac{2}{p}} & : & 1/2<r<1\\
c_{r,p}\left(\ln n\right)^{1+\frac{2}{p}}\varepsilon^{\frac{2}{p}} & : & r=1
\end{array}%
\right.
\]
For $n\geq n_0(r,p)$, the coefficient $c_{r,p}$ can be written explicitly in terms of $r$ and $p$, as can $n_0(r,p)$.

\bigskip

\begin{proof}
Let $G$ be a random matrix with i.i.d. standard normal random variables as entries, and let $\theta \in S^{n-1}$. $G \theta$ therefore has the standard normal distribution in $\mathbb{R}^n$. Setting $\psi (x)=\sum i^{-r}x_{\left[ i\right] }^{p}$,%
\begin{equation}
\left\vert \nabla \psi (x)\right\vert =p\left( \sum_{i=1}^{n}i^{-2r}x_{\left[
i\right] }^{2(p-1)}\right) ^{1/2}  \label{Lo grad}
\end{equation}%
which is valid for all $x$ with distinct non-zero coordinates. Fix any $t>0$. With (\ref{Lo grad}) in mind,
for $j\in \left\{ 0,1\right\} $ set%
\begin{eqnarray*}
A_{j} &=&\left\{ x\in \mathbb{R}^{n}:\left\vert x \right\vert_{\sharp}\leq \left(\frac{4}{3}\right)^jS\right\}
\end{eqnarray*}%
where $\left\vert \cdot \right\vert_{\sharp}$ and $S$ (and $R$ below) are as in Theorem \ref{orderorderbound}. The cases in that theorem overlap, which is not a problem as long as you pick a case that applies to the values of $p$ and $r$ in question, and stick with that case. We shall apply the cases as follows:

\medskip
$\bullet$ If $3/2\leq p <\infty$ use Case I.

$\bullet$ If $1\leq p <3/2$ and $p\geq 3/2-2r$ and $p\neq 2-2r$ use Case II.

$\bullet$ If $1\leq p <3/2$ and $p< 3/2-2r$ use Case III.

$\bullet$ If $1\leq p <3/2$ and $p=2-2r$ use Case IV (either IVa or IVb, whichever applies).
\medskip
 
\noindent Since $r\leq2$, the bounds in Theorem \ref{orderorderbound} simplify slightly, and can be written as follows. In each of the cases below, definitions are given for $A$ and $B$, and in each case $R\leq A+Bt^{2(p-1)}$, where $R$ is as defined in Theorem \ref{orderorderbound} (with the same value of $t$ that appears here). Conditions defining the case (i.e. i, ii, iii, iv) come first and are either without brackets or with square brackets [...], the square brackets indicating a redundant condition. Conditions defining the sub-case (i.e. a, b) come last and are in parentheses (...).

\medskip

\textbf{Case  ia:} $3/2\leq p <\infty$ (and $0\leq r\leq1/2$),
\begin{eqnarray*}
A&=&\frac{C^p p^p n^{1-2r} \left( \ln n \right)^p}{\left[ p+(1-2r) \ln n\right]^p}\\
B&=&C^p\left( \frac{\ln n}{1+\left\vert 2-2r-p \right\vert \ln n} \right)^{\max\{2-p,0\}}\left( 1+n^{2-2r-p} \right)
\end{eqnarray*}

\medskip

\textbf{Case  ib:} $3/2\leq p <\infty$ (and $1/2< r\leq2$),
\begin{eqnarray*}
A&=&\frac{C^p\left( \ln n \right)^p}{1+(2r-1) \ln n }\\
B&=&C^p\left( \frac{\ln n}{1+\left\vert 2-2r-p \right\vert \ln n} \right)^{\max\{2-p,0\}}\left( 1+n^{2-2r-p} \right)
\end{eqnarray*}

\medskip

\textbf{Case  iia:} $1\leq p <3/2$, $p\geq3/2-2r$, $p\neq 2-2r$ (and $0\leq r\leq1/2$),
\begin{eqnarray*}
A&=&\frac{Cn^{1-2r}\left(\ln n\right)^p}{\left[1+(1-2r)\ln n \right]^p}\\
B&=&C\frac{1+n^{2-2r-p}}{1+\left\vert 2-2r-p\right\vert\ln n}\ln n
\end{eqnarray*}

\medskip

\textbf{Case  iib:} $1\leq p <3/2$, $[p\geq3/2-2r]$, $[p\neq 2-2r]$ (and $1/2< r\leq2$),
\begin{eqnarray*}
A&=&C\frac{\left(\ln n \right)^p}{1+(2r-1)\ln n}\\
B&=&C\frac{1+n^{2-2r-p}}{1+\left\vert 2-2r-p\right\vert\ln n}\ln n
\end{eqnarray*}

\medskip

\textbf{Case  iii:} $1\leq p <3/2$ and $p<3/2-2r$,
\begin{eqnarray*}
A&=&Cn^{1-2r}\\
B&=&Cn^{2-2r-p}\left( \frac{\ln n}{1+(1-4r) \ln n} \right)^{p-1}
\end{eqnarray*}

\medskip

\textbf{Case  iv:} $1\leq p <3/2$ and $p=2-2r$,
\begin{eqnarray*}
A&=&C\min\left\{(1-2r)^{-1},\ln n\right\}n^{1-2r}\\
B&=&C\left( \ln n \right)^{2-p}
\end{eqnarray*}

\medskip

By Theorem \ref{orderorderbound}, $\gamma _{n}\left( A_{1}\right) \geq \gamma
_{n}\left( A_{0}\right) \geq 1-C\exp \left( -t^{2}/2\right) $. Since $A_{1}$
is convex, a bound on the gradient transfers directly to a bound on the
Lipschitz constant (using Lemma \ref{diff restrict} to ignore points of non-differentiability), and $Lip\left( \psi |_{A_{1}}\right) \leq C^{p}R^{1/2}$.
We may now extend the restriction $\psi |_{A_{1}}$ to a Lipschitz function $%
\psi ^{\ast }:\mathbb{R}^{n}\rightarrow \mathbb{R}$ such that $Lip\left(
\psi ^{\ast }\right) =Lip\left( \psi |_{A_{1}}\right) $. By a result of
Schechtman (see his comments near the end of the paper), as long as $k\leq
ct^{2}$, with probability at least $1-C\exp \left( -ct^{2}\right) $, the
following event occurs: for all $\theta \in S^{k-1}$,%
\begin{equation}
\left\vert \psi ^{\ast }\left( G\theta \right) -\mathbb{E}\psi ^{\ast
}\left( G\theta \right) \right\vert \leq tLip\left( \psi ^{\ast }\right)
\label{Lo 1st conc}
\end{equation}%
Here it is essential to have a result that applies to Lipschitz functions
besides just norms (the most typical application of Schechtman's result is
to norms). We now show that for all $\theta \in S^{k-1}$, $G\theta \in A_{1}$
and therefore $\psi ^{\ast }\left( G\theta \right) =\psi \left( G\theta
\right) $. To do this, consider a $1/4$-net $\mathcal{N}\subset S^{k-1}$
with cardinality $\left\vert \mathcal{N}\right\vert \leq 12^{k}$. By the
union bound, with probability at least $1-12^{k}C\exp \left( -ct^{2}\right)
\geq 1-C\exp \left( -c_{2}t^{2}\right) $, for all $\omega \in \mathcal{N}$, $%
G\omega \in A_{0}$.
Now for any $\theta \in S^{k-1}$ write $\theta =\sum_{i=0}^{\infty
}\varepsilon _{i}\omega _{i}$, where $0\leq \varepsilon _{i}\leq 4^{-i}$ and 
$\omega _{i}\in \mathcal{N}$. Then%
\[
\left\vert G\theta \right\vert _{\sharp }\leq \sum_{i=0}^{\infty
}4^{-i}\left\vert G\omega _{i}\right\vert _{\sharp }\leq \frac{4}{3}S
\]%
which shows that $G\theta \in A_{1}$. (\ref{Lo 1st conc}) can then be
written as%
\[
\left\vert \psi \left( G\theta \right) -\mathbb{E}\psi ^{\ast }\left(
G\theta \right) \right\vert \leq tC^{p}R^{1/2} 
\]%
valid for all $\theta \in S^{k-1}$. It is an elementary calculation that
concentration about any point implies concentration about the median, with
slightly modified constant, so%
\begin{equation}
\left\vert \frac{\psi \left( G\theta \right) }{\mathbb{M}\psi \left( G\theta
\right) }-1\right\vert \leq \frac{C^{p}tR^{1/2}}{\mathbb{M}\psi \left(
G\theta \right) }  \label{Lo first frac}
\end{equation}%
Choose $t$ so that%
\begin{equation}
4p^{-1}\frac{C^{p}tR^{1/2}}{\mathbb{M}\psi \left( G\theta \right) }%
=\varepsilon  \label{Lo t in terms e}
\end{equation}%
and assume that $\varepsilon \in \left( 0,2/p\right) $ so that $p\varepsilon
/4<1/2$ (in order to satisfy $\left\vert 1-u\right\vert \leq 1/2$ below).
The bounds for $\varepsilon \in \left[ 2/p,1/2\right) $ will follow from the
bounds for $\varepsilon \in \left( 0,2/p\right) $ by monotonicity and by
changing the value of $C$ that appears in $C^{p}$. Using $\left\vert
1-u^{1/p}\right\vert \leq p^{-1}2^{1-1/p}\left\vert 1-u\right\vert $, which
holds when $\left\vert 1-u\right\vert \leq 1/2$, with $u=\psi \left( G\theta
\right) /\mathbb{M}\psi \left( G\theta \right) $, (\ref{Lo first frac})
implies%
\[
\left\vert \frac{\psi \left( G\theta \right) ^{1/p}}{\mathbb{M}\psi \left(
G\theta \right) ^{1/p}}-1\right\vert \leq 2p^{-1}\frac{C^{p}tR^{1/2}}{%
\mathbb{M}\psi \left( G\theta \right) }<\varepsilon 
\]%
It then follows from homogeneity that%
\[
\left( 1-\varepsilon \right) \left\vert x\right\vert \mathbb{M}\left\vert
Ge_{1}\right\vert _{r,p}\leq \left\vert Gx\right\vert _{r,p}\leq \left(
1+\varepsilon \right) \left\vert x\right\vert \mathbb{M}\left\vert
Ge_{1}\right\vert _{r,p} 
\]%
for all $x\in \mathbb{R}^{k}$. (\ref{Lo t in terms e}) can be used to bound $%
t$ in terms of $\varepsilon $, since $R$ has been expressed in terms of $t$ in Theorem \ref{orderorderbound},
and in Lemma \ref{Lo med calculation} $\mathbb{M}\psi \left( G\theta \right) 
$ is bounded below in $t$. The sufficient condition $k\leq ct^{2}$ and the
probability bound $1-C\exp \left( -ct^{2}\right) $ can then be written in
terms of $\varepsilon $. The inversion can be simplified by converting a sum
to a $\max $ and using the fact that if $f(x)=$ $\max \left\{
g(x),h(x)\right\} $ with $g,h$ continuous and increasing, then $%
f^{-1}(x)=\min \left\{ g^{-1}(x),h^{-1}(x)\right\} $. The result is that,
\[
t^2\geq \min\left\{c^pA^{-1}\left(\mathbb{M}\sum_{i=1}^n i^{-r}X_{\left[ i\right] }^{p} \right)^2\varepsilon^2, cB^{-1/p}\left(\mathbb{M}\sum_{i=1}^n i^{-r}X_{\left[ i\right] }^{p} \right)^{2/p}\varepsilon^{2/p}\right\}
\]
where $A$ and $B$ are as defined in cases i-iv above, and $X=Ge_1$ follows the standard normal distribution in $\mathbb{R}^n$.
Cases ib and iib split into cases ib*, ib**, iib* and iib**, depending on whether $1/2<r\leq1$ or $1<r\leq2$. The final bounds can then be written as in the statement of the theorem.
\end{proof}

\section{\label{LoGen}The general case}

Here we study the norm
\[
\left\vert x\right\vert_{\omega,p}=\left(\sum_{i=1}^n\omega_i x_{[i]}^p\right)^{1/p}
\]
where $\left(\omega_i\right)_1^n$ is any non-increasing sequence in $[0,1]$ with $\omega_1=1$. Our main interest is in the case $1\leq p<\infty$, however our proof will force us to consider also $0<p<1$. The proof of Lemma \ref{Lo Lip con} generalizes easily and we see that
\[
\sup \left\{ \left( \sum_{i=1}^{n}\omega_i\theta _{\left[ i\right]
}^{p}\right) ^{1/p}:\theta \in S^{n-1}\right\} =\left\{ 
\begin{array}{ccc}
\left( \sum_{1}^{n}\omega_i^{\frac{2}{2-p}}\right) ^{\frac{2-p}{2p}} & : & p\in \left[
1,2\right) \\ 
1 & : & p\in \left[ 2,\infty \right)%
\end{array}%
\right. 
\]
and generalizing Lemma \ref{Lo med calculation}, for $0<p<\infty$,
\[
\mathbb{M}\left\vert X\right\vert_{\omega,p}\leq C\left( \sum_{i=1}^{n}\omega_i\left(\ln \frac{n}{i}\right)^{p/2}\right) ^{1/p}
\]
with the reverse inequality with $C$ replaced by $c$. For $1\leq p<\infty$, $\left\vert \cdot\right\vert_{\omega,p}$ is a norm, while for $0<p<1$ the triangle inequality is replaced with
\[
\left\vert x+y\right\vert_{\omega,p}\leq 2^{1/p}\left(\left\vert x\right\vert_{\omega,p}+\left\vert y\right\vert_{\omega,p}\right)
\]
(proof just as in the classical case of $\ell_p^n$), and $\left\vert \cdot\right\vert_{\omega,p}$ is no longer a norm but a quasi-norm. For an infinite sum, using induction one can show that
\[
\left\vert \sum_{i=1}^\infty x_i\right\vert_{\omega,p}\leq \sum_{i=1}^\infty2^{i/p}\left\vert x_i\right\vert_{\omega,p}
\]

 As sometimes happens, the general result is easier to state and prove than is the special case, since the complexity is hidden. The proof of Theorem \ref{LoMain2} below is a variation of the proof of Theorem \ref{LoMain}, using the quasi-norm property instead of the triangle inequality.

\begin{theorem}
\label{LoMain2}There exists a universal constant $c>0$ such that the
following is true. Let%
\[
\left( n,k,\varepsilon,p\right) \in \mathbb{N}\times \mathbb{N}\times
\left( 0,1/2\right) \times \left[ 1,\infty
\right) 
\]
and let $\left(\omega_i\right)_1^n$ be any non-increasing sequence in $[0,1]$ with $\omega_1=1$. Let $\left\vert \cdot\right\vert_{\omega,p}$ denote the corresponding Lorentz norm, and let $G$ be a random $n\times k$ matrix with i.i.d. standard normal random variables as entries. If $p\neq1$ set
\[
d=\left(1+\frac{1}{p-1}\right)^{-1}\min \left\{\frac{c^p\left( \sum_{i=1}^{n}\omega_i\left(\ln \frac{n}{i}\right)^{p/2}\right) ^{2}\varepsilon^2}{\sum_{i=1}^{n}\omega_i^2\left(\ln \frac{n}{i}\right)^{p-1}},
cB^{-1/p}\left( \sum_{i=1}^{n}\omega_i\left(\ln \frac{n}{i}\right)^{p/2}\right) ^{2/p}\varepsilon^{2/p}\right\}
\]
where
\[
B=\left\{ 
\begin{array}{ccc}
\sum_{i=1}^n\omega_i^2i^{-(p-1)}
 & : & 1< p <3/2\\ 
\left(\sum_{i=1}^n\omega_i^{\frac{2}{2-p}}\right)^{2-p} & : & 3/2\leq p<2\\
1 & : & 2\leq p<\infty
\end{array}%
\right. 
\]
and if $p=1$ set
\[
d=\frac{c\left( \sum_{i=1}^{n}\omega_i\left(\ln \frac{n}{i}\right)^{1/2}\right) ^{2}\varepsilon^2}{\sum_{i=1}^{n}\omega_i^2}
\]
Assume that $k\leq d$. With probability at least $1-2e^{-d} $ the following event
occurs: for all $x\in \mathbb{R}^{k}$,%
\[
\left( 1-\varepsilon \right) \left\vert x\right\vert \mathbb{M}\left\vert
Ge_{1}\right\vert _{\omega,p}\leq \left\vert Gx\right\vert _{\omega,p}\leq \left(
1+\varepsilon \right) \left\vert x\right\vert \mathbb{M}\left\vert
Ge_{1}\right\vert _{\omega,p} 
\]%
\end{theorem}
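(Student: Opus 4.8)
The plan is to run the same machine as in the proof of Theorem \ref{LoMain}, now with $\psi(x)=\left\vert x\right\vert_{\omega,p}^{p}=\sum_{i}\omega_{i}x_{[i]}^{p}$ in place of $\sum_{i}i^{-r}x_{[i]}^{p}$, keeping every quantity expressed through sums of the $\omega_{i}$ rather than reducing them to explicit powers of $n$. First I would record the gradient identity $\left\vert\nabla\psi(x)\right\vert=p\left(\sum_{i}\omega_{i}^{2}x_{[i]}^{2(p-1)}\right)^{1/2}$, valid off the set where two coordinates coincide or a coordinate vanishes, and hence, by Lemma \ref{diff restrict}, equal to $\mathrm{Lip}(\psi|_{A})$ up to the supremum for any convex $A$. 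The order-statistic inputs already available are used freely: $\mathbb{M}\psi(X)$ is of the same order as $\sum_{i}\omega_{i}(\ln(n/i))^{p/2}$, the quantile bound $X_{[i]}\le C(\ln(n/i)+t^{2}/i)^{1/2}$ of Lemma \ref{normalorderstats}, and the generalizations of Lemmas \ref{Lo med calculation} and \ref{Lo Lip con} stated at the start of Section \ref{LoGen}.

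The heart of the argument is the analogue of Theorem \ref{orderorderbound}: for each $t>0$, produce a norm (or quasi-norm) $\left\vert\cdot\right\vert_{\sharp}$ and numbers $S,R$ with $\mathbb{P}\left\{\left\vert X\right\vert_{\sharp}\le S\right\}\ge1-C\exp(-t^{2}/2)$ and $\left\vert x\right\vert_{\sharp}\le S\Rightarrow\sum_{i}\omega_{i}^{2}x_{[i]}^{2(p-1)}\le R$, with $R\le A+Bt^{2(p-1)}$, $A$ comparable to $\sum_{i}\omega_{i}^{2}(\ln(n/i))^{p-1}$ and $B$ as in the statement. I would split on $p$. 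When $p\ge3/2$ the exponent $2(p-1)$ is at least $1$, so $\left(\sum_{i}\omega_{i}^{2}x_{[i]}^{2(p-1)}\right)^{1/(2(p-1))}$ is itself a Lorentz norm; classical Gaussian concentration applies directly, its median and Lipschitz constant come from the generalized Lemmas \ref{Lo med calculation} and \ref{Lo Lip con}, and one reads off $B=1$ for $p\ge2$ and $B=\left(\sum_{i}\omega_{i}^{2/(2-p)}\right)^{2-p}$ for $3/2\le p<2$. When $p=1$, $\psi=\left\vert\cdot\right\vert_{\omega,1}$ is already a norm with Lipschitz constant $\left(\sum_{i}\omega_{i}^{2}\right)^{1/2}$, and the whole passage through $\psi$ and $\psi^{1/p}$ is unnecessary. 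The delicate range is $1<p<3/2$, where $2(p-1)<1$ and $\sum_{i}\omega_{i}^{2}x_{[i]}^{2(p-1)}$ is not a function of a norm of $x$; there, as in Case II of Theorem \ref{orderorderbound}, I would apply H\"older's inequality with exponents $\frac{1}{2(p-1)}$ and $\frac{1}{3-2p}$ to dominate it by $\left\vert x\right\vert_{\sharp}^{2(p-1)}\left(\sum_{i}\omega_{i}^{2}(\ln(n/i)+t^{2}/i)^{p-1}\right)^{3-2p}$ with $\left\vert x\right\vert_{\sharp}=\sum_{i}\tilde{\omega}_{i}x_{[i]}$ for a weight $\tilde{\omega}$ tuned to make the estimate sharp while kept non-increasing (so $\left\vert\cdot\right\vert_{\sharp}$ is a genuine norm), using restriction of the sums to $i\le n/e$, replacement of $\tilde{\omega}$ by its non-increasing majorant, and, when $\omega$ decays too slowly for a logarithmic weight to help (the $\ell_{p}^{n}$-type regime), falling back to $\left\vert x\right\vert_{\sharp}=\sum_{i}\omega_{i}^{2}x_{[i]}$ as in Case III. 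Feeding in $X_{[i]}\le C(\ln(n/i)+t^{2}/i)^{1/2}$ and the generalized median and Lipschitz bounds, and splitting $(\ln(n/i)+t^{2}/i)^{p-1}\le C(\ln(n/i))^{p-1}+Ct^{2(p-1)}i^{-(p-1)}$, produces $A$ and $B=\sum_{i}\omega_{i}^{2}i^{-(p-1)}$.

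With this in hand the rest transcribes the proof of Theorem \ref{LoMain}: set $A_{j}=\left\{x:\left\vert x\right\vert_{\sharp}\le(4/3)^{j}S\right\}$, note $\gamma_{n}(A_{1})\ge\gamma_{n}(A_{0})\ge1-C\exp(-t^{2}/2)$ and, by convexity of $A_{1}$ and Lemma \ref{diff restrict}, $\mathrm{Lip}(\psi|_{A_{1}})\le C^{p}R^{1/2}$; extend $\psi|_{A_{1}}$ to a Lipschitz $\psi^{\ast}$ on $\mathbb{R}^{n}$ with the same Lipschitz constant; apply Schechtman's uniform deviation bound (Theorem \ref{Schechtbound}) and a net argument over $S^{k-1}$ to get, with high probability, $G\theta\in A_{1}$ and $\psi(G\theta)=\psi^{\ast}(G\theta)$ for all $\theta$ and concentration of $\psi(G\theta)$ about its mean; pass to $\psi^{1/p}=\left\vert\cdot\right\vert_{\omega,p}$ via $\left\vert1-u^{1/p}\right\vert\le p^{-1}2^{1-1/p}\left\vert1-u\right\vert$ and homogeneity; and invert the relation between $t$ and $\varepsilon$, with $\mathbb{M}\psi(G\theta)$ bounded below through Lemma \ref{Lo med calculation} and a sum converted to a maximum (and its inverse to a minimum) exactly as in Theorem \ref{LoMain}, which delivers the displayed $d$. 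At the net/covering step, where $\left\vert\cdot\right\vert_{\sharp}$ (or the exponent $2(p-1)$) is only a quasi-norm, one uses the quasi-triangle inequality $\left\vert\sum_{j}x_{j}\right\vert\le\sum_{j}2^{j/p}\left\vert x_{j}\right\vert$ in place of the triangle inequality; this forces a finer net and is, I expect, the source of the prefactor $\left(1+\frac{1}{p-1}\right)^{-1}$, and also the reason the argument passes through the exponent range $0<p<1$ even though only $p\ge1$ is of interest, as flagged in the text.

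The step I expect to be the main obstacle is the construction of $\left\vert\cdot\right\vert_{\sharp}$ in the range $1<p<3/2$: choosing the weight $\tilde{\omega}$ so that the functional is honestly a norm ($\tilde{\omega}$ non-increasing) \emph{and} simultaneously records the optimal $A$ comparable to $\sum_{i}\omega_{i}^{2}(\ln(n/i))^{p-1}$ and $B=\sum_{i}\omega_{i}^{2}i^{-(p-1)}$. This is precisely where the trichotomy of Cases II, III and IV of Theorem \ref{orderorderbound} is concealed, and carrying it out uniformly over all admissible $\omega$ -- rather than case by case, as one can afford when $\omega_{i}=i^{-r}$ -- is what makes the bookkeeping nontrivial.
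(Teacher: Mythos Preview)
Your overall architecture is right and matches the paper's proof: work with $\psi(x)=\sum_i\omega_i x_{[i]}^p$, bound $\lvert\nabla\psi\rvert$ via $\sum_i\omega_i^2 x_{[i]}^{2(p-1)}$, restrict to a convex set $A_1$ on which this is controlled, extend, apply Theorem~\ref{Schechtbound}, pass from $\psi$ to $\psi^{1/p}$, and invert. The split into $p\ge 3/2$, $p=1$, and $1<p<3/2$ is also what the paper does, and your treatment of $p\ge 3/2$ and $p=1$ is exactly right.

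Where you and the paper diverge is the range $1<p<3/2$. You propose to redo the Case~II/III/IV machinery of Theorem~\ref{orderorderbound}, building a genuine norm $\lvert x\rvert_\sharp=\sum_i\tilde\omega_i x_{[i]}$ via H\"older and then worrying about whether $\tilde\omega$ can be kept non-increasing uniformly over all admissible $\omega$; you flag this as the main obstacle. The paper simply does not do this. It takes $\lvert x\rvert_\sharp=\bigl(\sum_i\omega_i^2 x_{[i]}^{2(p-1)}\bigr)^{1/(2(p-1))}$ as is---a quasi-norm, since $2(p-1)<1$---and bounds $\lvert X\rvert_\sharp$ not by Gaussian concentration (which would need a norm) but directly from the order-statistic estimate~(\ref{orderstat estimate}): with high probability $X_{[i]}\le C(\ln(n/i)+t^2/i)^{1/2}$, hence $\sum_i\omega_i^2X_{[i]}^{2(p-1)}\le C\sum_i\omega_i^2(\ln(n/i))^{p-1}+Ct^{2(p-1)}\sum_i\omega_i^2 i^{-(p-1)}$, which is exactly $A+Bt^{2(p-1)}$. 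The only place norm-ness matters is the net step, and there the quasi-triangle inequality is used with a $q/4$-net, $q=2^{-1/(p-1)}$, in place of a $1/4$-net; the net has size $(12/q)^k$, and absorbing $\ln(12/q)\asymp 1+\tfrac{1}{p-1}$ into the condition $k\le ct^2$ produces the prefactor $\bigl(1+\tfrac{1}{p-1}\bigr)^{-1}$. You already identified this mechanism in your final paragraph, but as a fallback; in the paper it is the whole argument, and your ``main obstacle'' never arises.

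So: your plan would work, but for $1<p<3/2$ it is harder than what is needed for the stated theorem. If you could carry out your norm construction uniformly in $\omega$ you would in fact \emph{remove} the prefactor $\bigl(1+\tfrac{1}{p-1}\bigr)^{-1}$ and improve on Theorem~\ref{LoMain2}; the paper itself notes this prefactor as a deficiency and points back to Theorem~\ref{LoMain} for the special weights $\omega_i=i^{-r}$, where exactly the Case~II--IV analysis you sketch is what removes it.
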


\begin{proof}
First, assume that $p\neq 1$ (we leave the case $p=1$ to the reader). For $x\in \mathbb{R}^n$, set
\[
\psi\left(x\right)=\sum_{i=1}^n\omega_i x_{[i]}^p \hspace{2cm}
\left\vert x\right\vert_{\sharp}=\left(\sum_{i=1}^n\omega_i^{2}x_{[i]}^{2(p-1)}\right)^{\frac{1}{2(p-1)}}
\]
and so
\[
\left\vert \nabla \psi(x)\right\vert=p\left(\sum_{i=1}^n\omega_i^{2}x_{[i]}^{2(p-1)}\right)^{1/2}
\]
which is valid for all $x$ with distinct non-zero coordinates. The points of non-differentiability are not a problem, by Lemma \ref{diff restrict}. Consider any $\theta \in S^{n-1}$. Set $R=A+C^pBt^{2(p-1)}$, where $A=C^p\sum_{i=1}^{n}\omega_i^2\left(\ln \frac{n}{i}\right)^{p-1}$ and $B$ is defined in the statement of the theorem. For $j\in\{0,1\}$, set
\[
A_j=\left\{x\in\mathbb{R}^n:\left\vert x\right\vert_{\sharp}\leq \left(\frac{4q}{3}\right)^jR^\frac{1}{2(p-1)}\right\}
\]
 where
\[
q=\left\{ 
\begin{array}{ccc}
2^{-1/(p-1)} & : & 1< p <3/2\\ 
1 & : & 3/2\leq p<\infty
\end{array}%
\right.
\]
For $1<p<3/2$ we now use (\ref{orderstat estimate}) from Lemma \ref{normalorderstats} on order statistics from the standard normal distribution, as used in Case II of Theorem \ref{orderorderbound}, and for $3/2\leq p<\infty$ we use Gaussian concentration applied to $\left\vert \cdot\right\vert_{\sharp}$. The result of this is that $\gamma_n\left(A_1\right)\geq \gamma_n\left(A_0\right)\geq 1-C\exp\left(-ct^2\right)$. It follows by definition of $A_1$ and by its convexity, and the expression for $\left\vert \nabla \psi\right\vert$ that $\mathrm{Lip}\left(\psi|_{A_1}\right)\leq C^pR^{1/2}$. By the extension property of real valued Lipschitz functions on metric spaces, $\psi|_{A_1}$ can be extended to a function $\psi^{*}:\mathbb{R}^n\mapsto\mathbb{R}$ such that $\mathrm{Lip}\left(\psi^*\right)=\mathrm{Lip}\left(\psi|_{A_1}\right)$. By Schechtman's result (Theorem \ref{Schechtbound} here), as long as $k\leq ct^2$, the following event occurs with probability at least $1-C\exp\left(-ct^2\right)$: for all $\theta\in S^{k-1}$,
\[
\left\vert \psi^*\left(G\theta\right)- \mathbb{M}\psi^*\left(G\theta\right)\right\vert \leq Ct\mathrm{Lip}\left(\psi^*\right)
\]
We now show that for all $\theta\in S^{k-1}$, $G\theta\in A_1$ and therefore $\psi\left(G\theta\right)=\psi^*\left(G\theta\right)$. Let $\mathcal{N}\subset S^{k-1}$ be a $q/4$-net in $S^{k-1}$. The standard volumetric bound shows that $\mathcal{N}$ can be chosen so that $\left\vert\mathcal{N}\right\vert\leq \left(12/q\right)^{k}$. By the bound on $\gamma_n\left(A_0\right)$ and the union bound, with probability at least $1-C\left(12/q\right)^{k}\exp\left(-ct^2\right)\geq1-C\exp\left(-c't^2\right)$, the following event occurs: for all $\theta\in\mathcal{N}$, $G\theta\in A_0$, i.e.
\[
\left\vert G\theta\right\vert_{\sharp}\leq R^\frac{1}{2(p-1)}
\]
Now for any $\theta \in S^{k-1}$, write $\theta=\sum_1^\infty \varepsilon_i \theta_i$, where $0\leq \varepsilon_i\leq \left(q/4\right)^i$ and $\theta_i\in\mathcal{N}$, to conclude (using the quasi-norm property),
\[
\left\vert G\theta\right\vert_{\sharp}\leq R^\frac{1}{2(p-1)}\sum_{i=0}^\infty\left(q\right)^{-(i-1)}\left(q/4\right)^i\leq \frac{4q}{3}R^\frac{1}{2(p-1)}
\]
So $G\theta\in A_1$ as desired, and $\psi^*\left(G\theta\right)=\psi\left(G\theta\right)$. So, conditioning on the events dealt with above, for all $\theta\in S^{k-1}$,
\[
\left\vert \psi\left(G\theta\right)- \mathbb{M}\psi^*\left(G\theta\right)\right\vert \leq Ct\mathrm{Lip}\left(\psi\right)
\]
The rest of the proof is identical to the proof of Theorem \ref{LoMain}, and we may change the coefficient of $e^{-d}$ from $C$ to 2 by changing the value of $c$.
\end{proof}

The bounds in Theorem \ref{LoMain2} are non-optimal when:

\medskip

\noindent $\bullet$ the coefficient sequence $\left(\omega_i\right)_1^n$ approximates $\left(i^{-r}\right)_1^n$, for $p=2-2r$ ($1/4<r<1/2$), and in this case we refer the reader to Theorem \ref{LoMain} and Corollary \ref{Corrasy},


\noindent $\bullet$ when $r>1$ in which case we refer the reader to Theorem \ref{Lo ell infty regime}, and when


\noindent $\bullet$ $p\rightarrow 1$ ($p\neq 1$), which is due to the presence of the factor $1/(1+1/(p-1))$, but this is not an issue in the asymptotic case $n\rightarrow \infty$ while $\omega$ and $p$ remain fixed. We refer the reader to Theorem \ref{LoMain} and Corollary \ref{Corrp} for estimates that do not include this extra factor.

\end{document}